\documentclass[11pt, leqno, twoside]{article}

\usepackage{amssymb}
\usepackage{amsmath}
\usepackage{amsthm}
\usepackage{amsfonts}
\usepackage{mathrsfs}
\usepackage{indentfirst}

\usepackage{color}

\allowdisplaybreaks

\usepackage{txfonts}
\pagestyle{myheadings}\markboth{\footnotesize\rm\sc Qi Sun and Ciqiang Zhuo}
{\footnotesize\rm\sc
Variable Triebel-Lizorkin Spaces}

\textwidth=15cm
\textheight=21cm
\oddsidemargin 0.46cm
\evensidemargin 0.46cm

\parindent=13pt

\def\rr{{\mathbb R}}
\def\rn{{{\rr}^n}}

\def\zz{{\mathbb Z}}
\def\cc{{\mathbb C}}
\def\nn{{\mathbb N}}
\def\cp{{\mathcal P}}

\def\cm{{\mathcal M}}

\def\cd{{\mathcal D}}
\def\cs{{\mathcal S}}

\def\fz{\infty}
\def\az{\alpha}

\def\dz{\delta}

\def\lz{\lambda}

\def\lf{\left}
\def\r{\right}

\def\hs{\hspace{0.26cm}}

\def\ls{\lesssim}
\def\gs{\gtrsim}

\def\noz{\nonumber}
\def\wz{\widetilde}

\def\cs{{\mathcal S}}

\def\supp{\mathop\mathrm{\,supp\,}}

\def\loc{{\mathop\mathrm{\,loc\,}}}
\def\esinf{\mathop\mathrm{\,ess\,inf\,}}
\def\esup{\mathop\mathrm{\,ess\,sup\,}}

\def\q1{\wz q}
\def\Q1{q_1}
\def\vlp{{L^{p(\cdot)}(\rn)}}

\def\loc{{\mathop\mathrm{loc\,}}}

\newtheorem{thm}{Theorem}[section]
\newtheorem{prop}[thm]{Proposition}
\newtheorem{lem}[thm]{Lemma}

\theoremstyle{definition}
\newtheorem{defn}[thm]{Definition}
\newtheorem{rem}[thm]{Remark}

\numberwithin{equation}{section}
\numberwithin{equation}{section}

\begin{document}

\title{\bf\Large The Molecular Characterizations of Variable Triebel-Lizorkin Spaces
Associated with the Hermite Operator and Its Applications
\footnotetext{\hspace{-0.35cm} 2020 {\it
Mathematics Subject Classification}. 42B35, 47B15
\endgraf {\it Key words and phrases}. Hermite Operator, variable exponent,
Triebel-Lizorkin space, molecular decomposition.
\endgraf This project is supported by
the Natural Science Foundation of Changsha (Grant No. kq2202237)
and of Hunan province (Grant No. 2022JJ30371), the National
Natural Science Foundation of China
(Grant Nos. 11831007, 11871100) and Scientific Research Fund of Hunan Provincial Education
Department (Grant No. 21A0067).}}
\author{Qi Sun and Ciqiang Zhuo\,\footnote{Corresponding author}}
\date{ }
\maketitle

\vspace{-0.7cm}

\begin{center}
\begin{minipage}{13cm}
{\small {\bf Abstract}\quad
In this article, we introduce inhomogeneous
variable Triebel-Lizorkin spaces, $F_{p(\cdot),q(\cdot)}^{\alpha(\cdot),H}(\mathbb R^n)$,
associated with the Hermite operator $H:=-\Delta+|x|^2$,
where $\Delta$ is the Laplace operator on $\mathbb R^n$, and mainly establish the molecular
characterization of this space.
As applications, we obtain some regularity results to fractional Hermite equations
$$(-\Delta+|x|^2)^\sigma u=f,\quad (-\Delta+|x|^2+I)^\sigma u=f,$$
and the boundedness of spectral multiplier associated to the
operator $H$ on the variable Triebel-Lizorkin space
$F_{p(\cdot),q(\cdot)}^{\alpha(\cdot),H}(\mathbb R^n)$.
Furthermore, we explain the relationship between
$F_{p(\cdot),q(\cdot)}^{\alpha(\cdot),H}(\mathbb R^n)$ and the
variable Triebel-Lizorkin spaces $F_{p(\cdot),q(\cdot)}^{\alpha(\cdot)}(\mathbb R^n)$
(introduced in Diening t al. J. Funct. Anal. 256(2009), 1731-1768.) via
the atomic decomposition.
}
\end{minipage}
\end{center}

\vspace{0.1cm}
\section{Introduction}
In last decades, spaces of functions or distributions have played a prominent role in various
areas of mathematics such as harmonic analysis, partial differential equations, approximation
theory,
probability and so on.

Particularly, function spaces with variable exponents on $\rn$ attract more and more attention
recently. Indeed, they have been the subject of intensive since the early
work \cite{C-U03} of Cruz-Uribe and \cite{Di04} of Diening, because of their intrinsic
interest for applications in harmonic analysis \cite{cfbook,cw14,dhr11,jzzw19,NS12,yyyz16},
in partial differential equations and variation calculus \cite{am05,hhl08,su09},
in fluid dynamics \cite{am02} and image processing \cite{cgzl08}.
Recall that the variable Lebesgue space $\vlp $
is a generalization of the classical Lebesgue space $L^p(\rn)$, via replacing the
constant exponent $p$ by the exponent function $p(\cdot):\ \rn\to(0,\fz)$, which consists of
all measurable functions $f$ such that
$\int_\rn[|f(x)|/\lz]^{p(x)}\,dx<\fz$ for some $\lz\in(0,\fz)$.
As a generalization of both the variable Lebesgue space $\vlp $
and the classical Hardy space $H^{p}(\rn)$,
the variable Hardy spaces $H^{p(\cdot)}(\rn)$ were first introduced by
Nakai and Sawano \cite{NS12} with $p(\cdot)$ satisfying the globally log-H\"older continuous
condition and investigated in \cite{Sa13, zyl16} later.
Independently, Cruz-Uribe and Wang \cite{cw14} also studied
the variable Hardy spaces $H^{p(\cdot)}(\rn)$ with $p(\cdot)$ satisfying
some conditions slightly weaker than those used in \cite{NS12}. In addition,
the characterizations of $H^{p(\cdot)}(\rn)$ via Riesz transforms with $p(\cdot)$
satisfying the same conditions as in \cite{cw14} were presented in \cite{yzn16}.

Based on the classical Triebel-Lizorkin space,
Diening et al. \cite{DHR09} introduced the variable
Triebel-Lizorkin space $F_{p(\cdot),q(\cdot)}^{s(\cdot)}(\rn)$ via the sequence space
$L^{p(\cdot)}(\ell^{q(\cdot)}(\rn))$,
which was the first work mixing up the concept of function spaces with variable smoothness
and variable integrability. After that, Almeida and H\"ast\"o \cite{ah10} introduced the
variable Besov space via subtly introducing
the sequence space $\ell^{q(\cdot)}(L^{p(\cdot)}(\rn))$, which makes a further step in
completing the unification process of function spaces with variable smoothness and
integrability.
Later, those spaces were further investigated in \cite{gkv18,gm18,kv12} and
generalized into variable Besov-type and Triebel-Lizorkin-type spaces
(see, for instance, \cite{Dd12,wyyz18,yzy15,yzy151}).
Here we point out that the vector-valued convolutional inequalities on
the mixed Lebesgue sequence spaces $L^{p(\cdot)}(\ell^{q(\cdot)}(\rn))$ and
$\ell^{q(\cdot)}(L^{p(\cdot)}(\rn))$ developed, respectively, in
\cite[Theorem 3.2]{DHR09} and \cite[Lemma 4.7]{ah10} supply well remedy for the absence of
the Fefferman-Stein vector-valued inequality
on these spaces, in studying Triebel-Lizorkin(-type) spaces and Besov(-type) spaces with
variable smoothness and integrability.

Note that the classical Besov and Triebel-Lizorkin space can be viewed as spaces
associated to Laplace operator $\Delta$ or their square roots on $\rn$
(see \cite{bpt95,bpt96,bpt97}) and are widely used in harmonic
analysis and partial differential equations.
However, with the development of harmonic analysis and partial differential equations,
people realized that the classical Besov and Triebel-Lizorkin spaces are
not always the most suitable spaces to study a number of operators.
Motivated by this, the theory of Besov and Triebel-Lizorkin spaces
associated to operators has been studied intensively
by many authors \cite{BDY12,kv12,PX08} and the
references therein.
Particularly, in this paper, we would like to focus our interest on the Hermite operator
$H=-\Delta+|x|^2$ on the Euclidean space $\rn$,
which has attracted much attention as a specific case of the Schr\"odinger
operator $\mathcal{L}:=-\Delta+V$ by taking the nonnegative $V(x)=|x|^2$.
Specifically, Petrushev and Xu \cite{PX08} extended the fundamental and remarkable
work of Frazier and Jawerth in \cite{FJ90} to the
inhomogeneous Besov and Triebel-Lizorkin spaces in Hermite settings.
In \cite{BDY12}, the theory of homogeneous Besov spaces associated to operators
enjoying the Gaussian upper bounds was studied for the limited indices $\alpha \in(-1,1)$
and $p,\, q\in [1,\fz]$. By using the functional calculus techniques,
Keryacharian and Petrushev in \cite{kp15} constructed a space of test functions
and the corresponding distribution space associated to nonnegative
self-adjoint operators $L$ which satisfy Gaussian upper bound with the small time,
the Markov property and the H\"older continuity, and then introduced
the inhomogeneous Besov and Triebel-Lizorkin spaces associated to the operator $L$.
Moreover, some basic properties such as the embedding theorem, frame decompositions and heat
kernel characterizations of those spaces are obtained in \cite{kp15}.
This was further generalized into the Besov-type and Triebel-Lizorkin-type space
on a space of homogeneous type associated to the operator $L$ by Liu et al. in \cite{lyy16}.
Combining with variable function space, Zhuo and Yang \cite{ZY20}
investigated the variable Besov space $B_{p(\cdot),q(\cdot)}^{s(\cdot),L}(\mathcal{X})$,
associated with the operator $L$ as in \cite{lyy16} on a homogeneous space $\mathcal{X}$,
which is also a continuous work on variable function space associated to operators in
\cite{yz18, yzz17,yz16, zy15,zy19}.

Using a different approach from those used in \cite{kp15,PX08}, Bui and Duong \cite{BD15}
introduced both homogeneous and inhomogeneous Besov and Triebel-Lizorkin spaces associated
to Hermite operator,
in terms of heat kernels via square functions,
and established their molecular decomposition as well as some embeddings.
As applications,
the boundedness of singular operators, such as negative powers, spectral multipliers and
lift operators involving Hermite operators, on these function spaces were considered.
Moreover, in \cite{bd21}, the boundedness of higher-order Riesz transforms on Besov and
Triebel-Lizorkin spaces associated the Hermite operator was established and further applied to
prove certain regularity estimates of second-order elliptic equations in divergence
form with the
oscillator perturbations.

Motivated by \cite{BD15,bd21}, in this article,
we consider the variable Triebel-Lizorkin space associated with the Hermite operator $H$.
We mainly establish the molecular decomposition of this space. As applications,
we show the boundedness of negative powers and spectral multipliers of the Hermite operators
on some appropriate Triebel-Lizorkin spaces.

This article is concluded as follows. In Section \ref{se-2},
we recall some preliminary results and give the definition of the space
$F_{p(\cdot),q(\cdot)}^{\az(\cdot),H}(\rn)$.
In Section \ref{se-3}, the molecular characterization of the space
$F_{p(\cdot),q(\cdot)}^{\az(\cdot),H}(\rn)$ is established in
Theorems \ref{thm10-18a} and \ref{thm10-9} below.
As applications, in Section \ref{se-4}, we develop the boundedness of
the Riesz potential and the Bessel potential of the Hermite operator,
and the spectral multipliers of Laplace type
for the Hermite operator.
In the last section, we discuss the relationship between variable Triebel-Lizorkin
space $F_{p(\cdot),q(\cdot)}^{\az(\cdot)}(\rn)$ introduced in \cite{DHR09} and the
variable Triebel-Lizorkin space associated with the Hermite operator $H$ studied in the present
article, based on the atomic decomposition of the space
$F_{p(\cdot),q(\cdot)}^{\az(\cdot)}(\rn)$.

As usual, we write $f\ls g$ or $g\gs f$ if there exist $C>0$ with $f\le Cg$ and
always use $C$ or $c$ to denote positive constants
that are independent of the main parameters involved but whose values may differ from
line to line. If $f\ls g$ and $g\ls f$, we then write $f\sim g$.
For $a\in\rr$, we denote the integer part of $a$ by $\lfloor a\rfloor$.
Let $\cs(\rn)$ denote the Schwartz space of all complex-valued,
rapidly decreasing and infinitely differentiable functions on $\rn$ and by $\cs'(\rn)$
we denote the dual space of $\cs(\rn)$ equipped with the weak-$\ast$ topology.
In addition, we also use the following notations in our paper, we denote by $\rr$ the set of all
real numbers and $\rr_+$ the set of positive real numbers, $\rn$ denotes the $n$-dimensional
Euclidean space. Let $\nn:=\{1,2,\cdots\}$ and $\nn_0=\nn \cup\{0\}$.
If $E$ is a subset of $\rn$, we denote by $\textbf{1}_{E}$ is
\emph{characteristic function}. If $\alpha=(\alpha_1,\dots,\alpha_n)\in \nn_0^n$ and
$f\in \cs(\rn)$, let
$$D^\alpha f(x):=\frac{\partial^{|\alpha|}f(x)}{\partial x_1^{\alpha_1}
\cdots\partial x_n^{\alpha_n}},\quad \forall\,x\in\rn,$$
where $|\alpha|:=\alpha_1+\cdot+\alpha_n$.

\section{Preliminary\label{se-2}}
\subsection{Hermite Operators and Kernels }

We first recall some background on the Hermite
operator in \cite{ST03,Th93}. The Hermite operator
on $\rn$ is defined by
\begin{equation}\label{3-22a}
H=-\Delta+|x|^2,
\end{equation}
which is also called $n$-dimensional harmonic oscillator.
The eigenfunctions of the harmonic oscillator are the multi-dimensional Hermite functions
$$\textbf{h}_\alpha(x):=h_{\alpha_1}(x_1)h_{\alpha_2}(x_2)\cdot\,\cdots\,\cdot h_{\alpha_n}(x_n),
\quad \forall\, x=(x_1,\cdots,x_n)\in \rn,$$
where $\alpha=(\alpha_1,\cdots,\alpha_n)\in\nn_0^n$ and, for any
$m\in\nn$, the Hermite function $h_m$ of degree $m$ is defined by
$$h_m(t):=(2^mm!\sqrt{\pi})^{-1/2}H_m(t)e^{-t^2/2},\quad \forall\,t\in\rr,$$
here and thereafter, $H_m$ is the Hermite polynomial defined by
$$H_m(t):=(-1)^me^{t^2}\partial_t^m(e^{-t^2}),\ \forall t\in \rr.$$
It is known that $\{\textbf{h}_\alpha\}_{\az\in\nn_0^n}$ forms
an complete orthogonal basis for $L^2(\rn)$. Moreover, one has
$$L(\textbf{h}_\alpha)=(2|\alpha|+n)\textbf{h}_\alpha,$$
where $|\alpha|:=\az_1+\cdots+\az_n$. Let $\{e^{-tH}\}_{t>0}$ be the semigroup generated by
$H$. Then for any $f\in L^2(\rn)$, we have
$$e^{-tH}(f):=\sum_{\az\in\nn_0^n}e^{-(2|\az|+n)t}\langle f,\textbf{h}_\az\rangle \textbf{h}_\az$$
with $\langle f,\textbf{h}_\az\rangle:=\int_\rn \textbf{h}_\az(x)f(x)\,dx$.
The kernel $W_t(x,y)$ of semigroup
$e^{-tH}$ is given by, for any $t\in(0,\fz)$ and any $x,\ y\in \rn$,
\begin{equation}\label{5-27a}
W_t(x,y)=\frac1{\pi^{n/2}}\left( \frac{e^{-2t}}{1-e^{-4t}}\r)^{n/2}
e^{-\frac14\big(\frac{1+e^{-2t}}{1-e^{-2t}}|x-y|^2+\frac{1-e^{-2t}}{1+e^{-2t}}|x+y|^2\big)}.
\end{equation}

Considering the square root of $H$, we denote the kernel associated to the operator
$(t\sqrt H)^ke^{-t\sqrt H}$ by $p_{t,k}(x,y)$, $\forall\, t\in(0,\fz)$ and
$\forall\, k\in\nn_0$.
When $k=0$, we drop the subscript $k$ to write $p_t(x,y)$. Then the kernels have the following
estimates (see \cite{MSTZ12} and also \cite[Lemmea 2.1]{BD15}).

\begin{lem}\label{le7-21}
For any given $k\in\nn_0$, there exist positive constants $C$ and $\dz$ such that, for any
$t\in(0,\fz)$ and any $x,\ y\in \rn$,
\begin{enumerate}
\item[{\rm(i)}] $p_t(x,\,y)\le C\frac{t}{(t+|x-y|)^{n+1}};$

\item[{\rm(ii)}] when $k\ge1$, $|p_{t,\,k}(x,\,y)|\le C\frac{t^k}{(t+|x-y|)^{n+k}}$.

\end{enumerate}
\end{lem}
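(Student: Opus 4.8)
The plan is to derive these kernel bounds from the explicit Hermite heat kernel formula \eqref{5-27a} via the subordination formula relating the Poisson-type semigroup $e^{-t\sqrt H}$ to the heat semigroup $e^{-sH}$. First I would recall the subordination identity
$$e^{-t\sqrt H}=\frac{t}{2\sqrt\pi}\int_0^\fz s^{-3/2}e^{-t^2/(4s)}e^{-sH}\,ds,$$
so that, differentiating in $t$ to bring down factors of $(t\sqrt H)^k$, the kernel $p_{t,k}(x,y)$ can be written as an integral against $W_s(x,y)$ with a weight of the form $t^k s^{-k/2-\cdots}e^{-t^2/(4s)}$ times polynomial corrections in $t^2/s$. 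From \eqref{5-27a} one reads off the pointwise Gaussian upper bound $W_s(x,y)\le C s^{-n/2}e^{-c|x-y|^2/s}$ for $s\in(0,1]$ and, for large $s$, exponential decay from the $e^{-2ts\cdot n/2}$-type factor; the key Gaussian-type estimate valid for all $s>0$ is $W_s(x,y)\le C\,\min\{s,1\}^{-n/2}e^{-c|x-y|^2/\min\{s,1\}}$, which is all that is needed.

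The main computation is then the scalar estimate: insert the Gaussian bound for $W_s$ into the subordinated integral and show
$$\int_0^\fz t^k s^{-k/2}e^{-t^2/(4s)}\,\min\{s,1\}^{-n/2}e^{-c|x-y|^2/\min\{s,1\}}\,\frac{ds}{s}\ls \frac{t^k}{(t+|x-y|)^{n+k}}.$$
I would split the integral at $s=1$. On $0<s\le 1$ this is a standard subordination computation: after the change of variables $s\mapsto s/(t+|x-y|)$ (or $s\mapsto (t+|x-y|)^2 s$) one recognizes the classical bound for the kernel of $(t\sqrt{-\Delta})^k e^{-t\sqrt{-\Delta}}$, namely $c_n t^k/(t+|x-y|)^{n+k}$, using $e^{-t^2/(4s)}e^{-c|x-y|^2/s}\le e^{-c'(t^2+|x-y|^2)/s}$. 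On $s>1$ the factor $\min\{s,1\}^{-n/2}e^{-c|x-y|^2/\min\{s,1\}}=e^{-c|x-y|^2}$ is harmless, and $\int_1^\fz t^k s^{-k/2}e^{-t^2/(4s)}\,\frac{ds}{s}$ is easily bounded by $C\min\{t^k,1\}\ls t^k/(t+|x-y|)^{n+k}$ after absorbing the bounded $e^{-c|x-y|^2}$ (and noting that for the regime where $|x-y|$ is large the Gaussian in $|x-y|$ dominates, while for $t$ and $|x-y|$ both bounded the denominator $(t+|x-y|)^{n+k}$ is comparable to a constant). For part (i) the same argument with $k=0$ gives the claim, with the harmless observation that positivity of $W_s$ makes the absolute values unnecessary.

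The step I expect to be the genuine obstacle is obtaining the uniform-in-$s$ Gaussian bound on $W_s(x,y)$ directly from \eqref{5-27a}, in particular controlling the cross term $e^{-\frac14\frac{1-e^{-2t}}{1+e^{-2t}}|x+y|^2}$ and the prefactor $\big(\frac{e^{-2t}}{1-e^{-4t}}\big)^{n/2}$ uniformly for $t$ near $0$ and $t$ near $\fz$: near $t=0$ one has $\frac{1+e^{-2t}}{1-e^{-2t}}\sim \frac1t$ and $\frac{e^{-2t}}{1-e^{-4t}}\sim\frac1{4t}$, recovering the Euclidean heat kernel up to constants, while as $t\to\fz$ the prefactor decays like $e^{-nt}$ and dominates, giving the exponential smallness that produces $\min\{t,1\}$ in the final bound. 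Since this estimate is quoted from \cite{MSTZ12} and \cite[Lemma 2.1]{BD15}, I would cite it there and keep the proof here focused on the subordination-plus-scalar-integral argument above; one should, however, verify that the polynomial-in-$t^2/s$ corrections arising from differentiating the subordination kernel $k$ times do not worsen the power, which follows because each extra factor $t^2/s$ is absorbed by the Gaussian $e^{-t^2/(4s)}$ at the cost of a constant.
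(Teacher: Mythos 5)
The paper itself does not prove this lemma; it is quoted from \cite{MSTZ12} and \cite[Lemma 2.1]{BD15}, and your subordination strategy is indeed the standard argument behind those references. However, your sketch has a genuine gap in the large-time regime. The estimate you declare to be ``all that is needed'', namely $W_s(x,y)\le C\min\{s,1\}^{-n/2}e^{-c|x-y|^2/\min\{s,1\}}$, is too weak: for $s\ge1$ it retains no decay in $s$, whereas for $t$ large the subordinated integral concentrates its mass at $s\sim t^2$, so without decay in $s$ you cannot produce any decay in $t$. Concretely, your displayed scalar inequality fails for $t\ge 1$ and $|x-y|\le 1$: the portion $s\ge t^2$ of the left-hand side already contributes $\gtrsim t^k\int_{t^2}^\fz s^{-k/2-1}\,ds\cdot e^{-c|x-y|^2}\sim 1$, while the right-hand side is $\sim t^{-n}\to 0$. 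For the same reason your claim that $C\min\{t^k,1\}e^{-c|x-y|^2}\lesssim t^k/(t+|x-y|)^{n+k}$ is false when $t$ is large and $|x-y|$ stays bounded --- exactly the regime your parenthetical does not address. Note the target bound really does decay as $t\to\fz$ (at $x=y$ it is $t^{-n}$), and this decay comes from the spectral gap of $H$, i.e.\ from the decay of $W_s$ for large $s$, which your weakened bound throws away.

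The fix is the bound the paper records in \eqref{eq10-20}: $W_s(x,y)\le Cs^{-n/2}e^{-c|x-y|^2/s}$ for \emph{all} $s>0$. This holds for $s\ge1$ as well, because in \eqref{5-27a} the prefactor decays like $e^{-ns}\le Cs^{-n/2}$ while $e^{-c|x-y|^2}\le e^{-c|x-y|^2/s}$ --- precisely the mechanism you mention (``the prefactor decays like $e^{-nt}$'') but then discard. With this uniform Gaussian bound no splitting at $s=1$ is needed: keeping the full subordination weight, the whole integral is controlled by $Ct^{k+1}\int_0^\fz s^{-(n+k+3)/2}e^{-c'(t^2+|x-y|^2)/s}\,ds=C't^{k+1}(t^2+|x-y|^2)^{-(n+k+1)/2}$, which gives (ii) since $t\le t+|x-y|$, and with $k=0$ gives exactly (i). Relatedly, be careful that your displayed weight $t^ks^{-k/2}\,\frac{ds}{s}$ has dropped the factor $t\,s^{-1/2}$ from the subordination kernel $\frac{t}{2\sqrt\pi}s^{-3/2}e^{-t^2/(4s)}$; as written, the $k=0$ case would only yield $(t+|x-y|)^{-n}$ rather than the claimed $t(t+|x-y|)^{-(n+1)}$ in (i). Your remaining points (absorbing the polynomial corrections in $t^2/s$ produced by the $k$ differentiations into the Gaussian, and positivity of $p_t$) are fine.
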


The following important facts was obtained in \cite[Propositions 2.2 and 2.5]{BD15},
which play crucial roles in investigating Besov and Triebel-Lizorkin spaces
associated with the Hermite operator in \cite{BD15,bd21} and also in the present article.

\begin{prop}\label{prop10-16}
Let $k\in\nn_0$ and $t\in(0,\fz)$.
\begin{enumerate}
\item[{\rm (i)}] Then it follows that, for any given $y\in\rn$,
$p_{t,\,k}(\cdot,\,y)\in\cs(\rn)$.

\item[{\rm (ii)}] The operator $(t\sqrt H)^ke^{-t\sqrt H}$ preserves
Schwartz functions in $\cs(\rn)$, namely, for any
$h\in\cs(\rn)$, $(t\sqrt H)^ke^{-t\sqrt H}h\in\cs(\rn)$. More precisely,
for any multi-index $\az,\ \beta\in\nn_0^n$ and any $x\in\rn$,
$$\lf|x^\beta D^\gamma[(t\sqrt H)^ke^{-t\sqrt H}h](x)\r|\ls \min\{1/t,t^{k-1/2}+t^{k+1}\},$$
where the implicit constant is independent of $k$, $t$ and $x$.
\end{enumerate}
\end{prop}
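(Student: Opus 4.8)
The plan is to prove Proposition~\ref{prop10-16} by exploiting the spectral representation of $H$ together with the explicit Mehler kernel \eqref{5-27a} and the fact that the Hermite functions $\mathbf{h}_\az$ are themselves Schwartz functions with well-controlled weighted derivatives. For part (i), I would start from the identity
$$
(t\sqrt H)^k e^{-t\sqrt H}\mathbf{h}_\az
=\bigl(t\sqrt{2|\az|+n}\bigr)^k e^{-t\sqrt{2|\az|+n}}\,\mathbf{h}_\az ,
$$
so that the kernel $p_{t,k}(\cdot,y)$ is the $L^2$-convergent series
$\sum_{\az\in\nn_0^n}(t\sqrt{2|\az|+n})^k e^{-t\sqrt{2|\az|+n}}\,\mathbf{h}_\az(\cdot)\mathbf{h}_\az(y)$.
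The key input is the classical pointwise bound on Hermite functions and their derivatives, namely that for each multi-index $\gamma$ one has $\|x^\bz D^\gamma\mathbf{h}_\az\|_{L^\fz(\rn)}\ls (1+|\az|)^{M(\bz,\gamma)}$ for some polynomial power $M$, while $\{\mathbf{h}_\az(y)\}$ decays in $\ell^2$ for fixed $y$. Combining these with the superpolynomial decay of the spectral weight $\lambda^k e^{-t\sqrt\lambda}$ in $\lambda$ (for fixed $t>0$) shows that the series for $x^\bz D^\gamma p_{t,k}(\cdot,y)$ converges absolutely and uniformly, which gives $p_{t,k}(\cdot,y)\in\cs(\rn)$.

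For part (ii), write $(t\sqrt H)^k e^{-t\sqrt H}h(x)=\int_\rn p_{t,k}(x,y)h(y)\,dy$ and differentiate under the integral sign, which is justified by part (i) and the rapid decay of $h$. To get the quantitative bound $|x^\bz D^\gamma[(t\sqrt H)^k e^{-t\sqrt H}h](x)|\ls \min\{1/t,\,t^{k-1/2}+t^{k+1}\}$ \emph{uniformly} in $k$, I would not use the crude kernel bound of Lemma~\ref{le7-21} directly (that would lose uniformity in $k$), but instead use the subordination formula expressing $e^{-t\sqrt H}$ as a Gaussian-weighted average of the heat semigroup $e^{-sH}$, together with the commutation relations for $H$: recall that the creation/annihilation operators $A_j^\pm=-\pa_{x_j}\pm x_j$ satisfy $[H,A_j^\pm]=\mp 2A_j^\pm$, so that $x^\bz D^\gamma$ can be rewritten in terms of the $A_j^\pm$ and intertwined with the spectral multiplier $m_{t,k}(\sqrt H)$ where $m_{t,k}(\lambda)=(t\lambda)^k e^{-t\lambda}$. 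This reduces the estimate to bounding, on $L^\fz$, operators of the form $\wz m_{t,k}(\sqrt H)$ with $\wz m_{t,k}$ a finite combination of shifted copies of $m_{t,k}$, uniformly in $k$; the elementary inequalities $\sup_{\lambda\ge n}(t\lambda)^k e^{-t\lambda}\ls \min\{1,(t\sqrt n)^{k}\}$ and a comparison of $1/t$ with $t^{k-1/2}$, $t^{k+1}$ in the two regimes $t\le 1$ and $t>1$ then produce the claimed two-sided min.

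The main obstacle, and the place where real care is needed, is precisely the \emph{uniformity in $k$}: a naive estimate through Lemma~\ref{le7-21} or through differentiating the Mehler kernel picks up constants growing with $k$ (from $k!$-type factors in the $k$-fold derivatives of $\lambda^k e^{-t\sqrt\lambda}$), and one must arrange the argument — via the intertwining with $A_j^\pm$ and a single clean estimate on the scalar multiplier $(t\lambda)^k e^{-t\lambda}$ — so that the $k$-dependence is absorbed into the bounded quantity $\sup_{\lambda\ge n}(t\lambda)^k e^{-t\lambda}$ rather than into the implicit constant. Everything else — convergence of the eigenfunction expansions, differentiation under the integral, the passage from kernel bounds to the mapping property — is routine once this point is handled. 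Since the statement is quoted from \cite[Propositions 2.2 and 2.5]{BD15}, I would in the write-up simply cite that reference for the full details and include only the above sketch.
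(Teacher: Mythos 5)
The paper itself contains no proof of Proposition \ref{prop10-16}: it is imported verbatim from \cite[Propositions 2.2 and 2.5]{BD15}, so your plan to cite that reference and give only a sketch coincides with the paper's own treatment, and your outline of part (i) (expanding $p_{t,k}(\cdot,y)$ in Hermite functions and playing the polynomial growth in $|\az|$ of the seminorms $\|x^\bz D^\gz \mathbf{h}_\az\|_{L^\fz(\rn)}$ against the superpolynomial decay of $(t\sqrt{2|\az|+n})^k e^{-t\sqrt{2|\az|+n}}$ for fixed $t>0$) is a correct and standard route.

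The step you yourself single out as the crux of part (ii), however, rests on a false inequality. The function $s\mapsto s^k e^{-s}$ attains the value $k^k e^{-k}$ at $s=k$, so the supremum of $(t\lambda)^k e^{-t\lambda}$ over $\lambda$ in any half-line $[c,\fz)$ (in particular over the spectrum of $\sqrt H$, whose gaps shrink) is of order $k^k e^{-k}$ as soon as $k\gs tc$; it is certainly not $\ls\min\{1,(t\sqrt n)^k\}$ with a constant independent of $k$. Hence reducing matters, via the creation/annihilation intertwining, to an $L^\fz$ bound by the scalar supremum of the multiplier $m_{t,k}$ cannot produce the $k$-uniformity you aim for: the $k$-dependence must instead be paid by finitely many Schwartz seminorms of $h$ (equivalently, by the rapid decay of $\langle h,\mathbf{h}_\az\rangle$ in $|\az|$), or one argues as in \cite{BD15} through the subordination formula together with weighted estimates for $x^\bz D^\gz e^{-sH}h$, which is where the exponents in $\min\{1/t,\,t^{k-1/2}+t^{k+1}\}$ actually originate. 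Two further remarks: with $A_j^{\pm}=-\pa_{x_j}\pm x_j$ the commutators are $[H,A_j^{\pm}]=\pm2A_j^{\pm}$, not $\mp2A_j^{\pm}$; and the uniformity in $k$ is never used in this paper (the proposition is only applied with the fixed value $k=m$, e.g.\ in Proposition \ref{prop6-21}), and as literally stated it looks delicate anyway, since for $n\ge2$, $1/\sqrt n<t<1$ and $\langle h,\mathbf{h}_0\rangle\neq0$ the $\mathbf{h}_0$-component alone gives $\|(t\sqrt H)^k e^{-t\sqrt H}h\|_{L^2(\rn)}\ge (t\sqrt n)^k e^{-t\sqrt n}|\langle h,\mathbf{h}_0\rangle|$, which grows geometrically in $k$ while $t^{k-1/2}+t^{k+1}$ decays; so your sketch should not be built around that point.
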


\begin{rem}
Let $f\in\cs'(\rn)$, $k\in\nn_0$ and $t\in(0,\fz)$.
\begin{enumerate}
\item[(i)]
By Proposition \ref{prop10-16}(i),
it is reasonable to define, for any $x\in\rn$,
$$(t\sqrt H)^ke^{-t\sqrt H}f(x):=\langle f,p_{t,k}(x,\cdot)\rangle,$$
where $\langle \cdot,\cdot\rangle$ is the pair between a liner functional in $\cs'(\rn)$
and a function in $\cs(\rn)$. Particularly, we have, for any $x\in\rn$,
$$e^{-\sqrt H}f(x)=\langle f,p_1(x,\cdot)\rangle\ls \lf\langle f, \frac1{(1+|x-\cdot|)^{n+1}}
\r\rangle.$$

\item[(ii)] It follows from Proposition \ref{prop10-16}(ii) that
we can also define $(t\sqrt H)^ke^{-t\sqrt H}f$ as a distribution in $\cs'(\rn)$ by setting
$$\lf\langle (t\sqrt H)^ke^{-t\sqrt H}f,\phi\r\rangle
:=\lf\langle f, (t\sqrt H)^ke^{-t\sqrt H}\phi\r\rangle, \quad \forall\phi\in\cs(\rn).$$
Moreover, this coincides with the definition in (i) of this remark.
\end{enumerate}
\end{rem}

\subsection{Variable Function Spaces}

For a measurable function $p(\cdot):\rn\rightarrow(0,\fz]$, let
\begin{equation*}
p^-:=\esinf_{x\in\rn}p(x)\ \ {\rm and} \ \ p^+:=\esup_{x\in\rn}p(x).
\end{equation*}
Denote by $\cp(\rn)$ the set of measurable functions $p(\cdot)$ with
$0< p^-\le p^+<\fz$. For any $p(\cdot)\in \cp(\rn)$,
the \emph{variable Lebesgue space} $L^{p(\cdot)}(\rn)$ is defined to be the
set of all measurable functions $f$ such that
$$\|f\|_{L^{p(\cdot)}(\rn)}:=\inf\lf\{\lz\in(0,\fz):\ \int_\rn\lf[\frac{|f(x)|}{\lz}
\r]^{p(x)}\,dx\le 1\r\}<\fz.$$

A measurable function $g$ on $\rn$ is said to satisfy the \emph{locally
log-H\"older continuous condition}, denoted by $g\in C_{\loc}^{\log}(\rn)$,
if there exists a positive constant $C_{\log}(g)$ such that, for all $x, y\in\rn$,
\begin{equation*}
|g(x)-g(y)|\le\frac{C_{\log}(g)}{{\log}(e+1/|x-y|)};
\end{equation*}
and $g$ is to said to satisfy the \emph{globally log-H\"older continuous condition},
denoted by $g\in C^{\log}(\rn)$, if $g\in C_{\loc}^{\log}(\rn)$ and there exist
a $g_{\fz}\in\rr$ and a positive constant $C_{\fz}(g)$ such that, for all $x\in\rn$,
$$|g(x)-g_{\fz}|\le\frac{C_{\fz}(g)}{{\log}(e+|x|)}.$$
For any $r\in(0,\fz)$, let $L_\loc^r(\rn)$ be the
\emph{set} of all locally $r$-order integrable functions on $\rn$.

\begin{rem}\label{rem5-20}
\begin{enumerate}
\item[(i)] If $p(\cdot)\in C^{\log}(\rn)$ with $p^-\in(1,\fz)$, then the Hardy-Littlewood
maximal operator
$\cm$ is bounded on the space $L^{p(\cdot)}(\rn)$ (see \cite[Theorem 3.16]{cfbook}), where
the Hardy-Littlewood maximal function $\mathcal{M}$ is defined by setting,
for any $f\in L_{\rm loc}^1(\rn)$ and $x\in\rn$,
$$\mathcal{M}f(x):=\sup_{x\in B}\frac1{|B|}\int_B|f(y)|dy,$$
with the supremum taking over all balls $B\subset\rn$ containing $x$.

\item[(ii)] We remark that the Fefferman-Stein vector-valued inequality on $L^p(\ell^q(\rn))$
corresponding to the Hardy-Littlewood maximal operator is a very important tool
in the study of classical Triebel-Lizorkin space. This is also true on
$L^{p(\cdot)}(\ell^q(\rn))$ with $q$ being a constant, namely, if $q\in(1,\fz)$ and
$p(\cdot)\in C^{\log}(\rn)$ with $p^-\in(1,\fz)$, then there exists a positive constant
$C$ such that, for all sequences $\{f_v\}_{v=1}^\fz$ in $L_{\loc}^1(\rn)$,
$$\lf\|\lf(\sum_{v=1}^{\fz}({\mathcal{M}f_v})^q\r)^{1/q}\r\|_{L^{p(\cdot)}(\rn)}
\le C\lf\|\lf(\sum_{v=1}^{\fz}|f_v|^q\r)^{1/q}\r\|_{L^{p(\cdot)}(\rn)}.$$
Unfortunately, it turns
out that it is not possible if $q$ is not a constant, even if $p$ is a constant
(see \cite[p.\,1746]{DHR09}).
As a substitute, the vector-valued convolutional inequality on
the mixed Lebesgue sequence space $L^{p(\cdot)}(\ell^{q(\cdot)}(\rn))$
developed in \cite[Theorem 3.2]{DHR09} (see also Lemma \ref{lem10-17a} below)
supply well remedy for this absence.
\end{enumerate}
\end{rem}

For any $R\in(0,\fz)$, $v\in\nn_0$ and any $x\in\rn$, we define
$$\eta_{v,R}(x)=\frac{2^{nv}}{(1+2^v|x|)^{R}}.$$
Then the vector-valued convolutional inequality on
the mixed Lebesgue sequence spaces developed in \cite[Theorem 3.2]{DHR09} is stated as follows.

\begin{lem}\label{lem10-17a}
Let $R\in(n,\fz)$ and $p(\cdot),q(\cdot)\in C^{\log}(\rn)$ with $p^-,\ q^-\in(1,\fz)$.
Then there exists a positive constant $C$ such that,
for every sequence $\{f_v\}_{v\in\nn_0}$ of $L^1_{\loc}(\rn)$ functions,
$$\lf\|\lf[\sum_{v\in\nn_0}\lf|\eta_{v,R}\ast f_v\r|^{q(\cdot)}\r]^{1/q(\cdot)}
\r\|_{L^{p(\cdot)}(\rn)}
\leq C\lf\|\lf[\sum_{v\in\nn_0}|f_v|^{q(\cdot)}\r]^{1/q(\cdot)}\r\|_{L^{p(\cdot)}(\rn)}.$$
\end{lem}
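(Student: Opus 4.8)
The plan is to deduce Lemma~\ref{lem10-17a} from the mixed-norm Fefferman--Stein type result that underlies \cite[Theorem 3.2]{DHR09}. First I would observe that the convolution $\eta_{v,R}\ast f_v$ is controlled pointwise by the Hardy--Littlewood maximal function: since $\eta_{v,R}(x)=2^{nv}(1+2^v|x|)^{-R}$ is a radially decreasing $L^1$-kernel with $\|\eta_{v,R}\|_{L^1(\rn)}\lesssim 1$ uniformly in $v$ (this uses $R>n$), a standard argument gives, for every $x\in\rn$,
\begin{equation*}
\lf|\eta_{v,R}\ast f_v(x)\r|\ls \cm f_v(x).
\end{equation*}
Thus it would suffice to prove the corresponding inequality with $\eta_{v,R}\ast f_v$ replaced by $\cm f_v$, i.e.\ the vector-valued maximal inequality on $L^{p(\cdot)}(\ell^{q(\cdot)}(\rn))$. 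Here is the first genuine obstacle: as recalled in Remark~\ref{rem5-20}(ii), this maximal inequality \emph{fails} in general when $q(\cdot)$ is not constant, so the naive reduction does not work — one must keep track of the convolution structure and cannot throw it away.

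So instead I would argue more carefully, exploiting that $\eta_{v,R}$ decays at a fixed polynomial rate. The idea is to split the kernel dyadically in the distance: write $\eta_{v,R}\ast f_v(x)$ as a sum over annuli $\{y:\,2^{j-1}\le 2^v|x-y|<2^j\}$ for $j\in\nn_0$ together with the ball $\{2^v|x-y|<1\}$. On each piece the average of $|f_v|$ against $\eta_{v,R}$ is bounded by $2^{-j(R-n)}$ times an average of $|f_v|$ over a ball of radius $2^{j-v}$ centered at $x$, hence by $2^{-j(R-n)}\cm f_v(x)$; but the gain $2^{-j(R-n)}$ with $R-n>0$ is summable in $j$. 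To promote this pointwise estimate to the mixed-norm level without invoking the (false) vector-valued maximal inequality for variable $q$, one uses the following substitute: for fixed $j$, the $j$-th piece is itself a convolution with a kernel supported in a ball of radius $\sim 2^{j-v}$ and comparable to $\eta_{v,R}$ up to the factor $2^{-j(R-n)}$, and one applies the \emph{scalar} (constant-exponent-in-the-averaging) boundedness together with the log-H\"older continuity of $p(\cdot),q(\cdot)$ to transfer the estimate across dyadic scales with only polynomial loss in $j$. Summing the geometric series in $j$ then yields the claim. This is exactly the scheme carried out in the proof of \cite[Theorem 3.2]{DHR09}; in our write-up I would either cite that theorem directly or reproduce its key lemmas (the ``$r$-trick'' relating $\eta_{v,R}\ast f_v$ to powers of maximal functions, and the lemma allowing one to pass $\eta_{v,R}$ through the variable-exponent norm, \cite[Lemmas 3.3--3.6]{DHR09}).

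\textbf{Main obstacle.} The crux is precisely the point flagged above: one cannot reduce to the scalar vector-valued maximal inequality because it is false for non-constant $q(\cdot)$. The real content of \cite[Theorem 3.2]{DHR09}, which I would invoke or reconstruct, is that the polynomial decay of the fixed kernels $\eta_{v,R}$ (quantified by $R>n$) is rigid enough to survive the variable-exponent machinery: the log-H\"older conditions on $p(\cdot)$ and $q(\cdot)$ control the oscillation of the exponents across the dyadic scales $2^{-v}$, so that the per-scale constants stay uniformly bounded and the geometric gain $2^{-j(R-n)}$ from the tail of the kernel dominates any polynomial loss incurred in moving between scales. Everything else — the pointwise annular decomposition, the elementary bound $\|\eta_{v,R}\|_1\lesssim1$, and the summation of the resulting series — is routine. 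Since this lemma is quoted verbatim from \cite[Theorem 3.2]{DHR09}, in the paper the cleanest route is simply to cite it; the above sketch indicates how its proof proceeds should a self-contained argument be desired.
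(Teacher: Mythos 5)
Your proposal is correct and takes essentially the same route as the paper: the paper does not prove this lemma at all, but quotes it verbatim as \cite[Theorem 3.2]{DHR09}, which is precisely the citation you settle on. Your identification of the key obstacle (the failure of the Fefferman--Stein vector-valued maximal inequality for non-constant $q(\cdot)$, cf.\ Remark \ref{rem5-20}(ii)) and your sketch of how the convolution structure and log-H\"older continuity are exploited in \cite{DHR09} are consistent with the paper's discussion.
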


Now, let us introduce the inhomogeneous variable Triebel-Lizorkin spaces
associated to the Hermite operator.

\begin{defn}\label{de9-16a}
Let $H$ be the Hermite operator as in \eqref{3-22a}, $p(\cdot),q(\cdot)\in C^{\log}(\rn)$,
$\az(\cdot)\in C_{\loc}^{\log}(\rn)\cap L^\fz(\rn)$
and $m\in\nn$.
Then the \emph{inhomogeneous variable Triebel-Lizorkin spaces}
associated with the Hermite operator $H$, denoted by
$F_{p(\cdot),q(\cdot)}^{\az(\cdot),H,m}(\rn)$,
is defined to be set of all $f\in \cs'(\rn)$ satisfying
$\|f\|_{F^{\az(\cdot),H,m}_{p(\cdot),q(\cdot)}(\rn)}<\fz$, where
\begin{align*}
\|f\|_{F^{\az(\cdot),H,m}_{p(\cdot),q(\cdot)}(\rn)}:=\lf\|e^{-\sqrt{H}}f\r\|_{L^{p(\cdot)}(\rn)}+
\lf\|\lf\{\int_0^1\lf[t^{-\az(\cdot)}|(t\sqrt{H})^m
e^{-t\sqrt{H}}f|\r]^{q(\cdot)}\frac{dt}t\r\}^{1/q(\cdot)}\r\|_{L^{p(\cdot)}(\rn)}.
\end{align*}
\end{defn}

\begin{rem}
\begin{enumerate}
\item[(i)]
The space $F_{p(\cdot),q(\cdot)}^{\az(\cdot),H,m}(\rn)$ is independent of
the choice on the parameter
\begin{equation}\label{7-5a}
m\in \lf(\max\{\az^+,0\}+n+\lf\lfloor \frac n{\min\{1,p^-,q^-\}} -n\r\rfloor+1+C_{\log}(\alpha),
\fz\r),
\end{equation}
 see Theorem \ref{thm10-18} below.
Hence, we always choose $m$ as in \eqref{7-5a} and denote
$F_{p(\cdot),q(\cdot)}^{\az(\cdot),H,m}(\rn)$ simply by
$F_{p(\cdot),q(\cdot)}^{\az(\cdot),H}(\rn)$.

\item[(ii)] When $p(\cdot)\equiv p$, $q(\cdot)\equiv q$ and $\alpha(\cdot)\equiv \az$ are
constants, then the space introduced in Definition \ref{de9-16a} goes back to
the inhomogeneous Triebel-Lizorkin space $F_{p,q}^{\az,H,m}(\rn)$ defined
in \cite[p.\,436]{BD15} but in which $p,\ q$ are required to be in $[1,\fz)$.

\item[(iii)]  We point out that, in \cite[Definition 4.1]{BD15}, Bui and Duong also define
the Triebel-Lizorkin space $F_{p,q}^{\az,H,m}(\beta,\gamma)$ associated with the Hermite operator,
where $\az+n<m$, $p\in(0,\fz)$, $q\in(0,\fz]$ and $0<\beta<1<\gamma$ with $\gamma-\beta>1$,
to be the set of all $f\in\cs'(\rn)$ satisfying
$$\|f\|_{F_{p,q}^{\az,H,m}(\beta,\gamma)}
:=\int_{\beta}^\gamma\|e^{-t\sqrt{H}}f\|_{L^p(\rn)}\,\frac{dt}t
+\lf\|\lf\{\int_0^1[t^{-\az}|(t\sqrt H)^me^{-t\sqrt H}f|]^q\,\frac{dt}t\r\}^{1/q}
\r\|_{L^p(\rn)}<\fz.$$
Using the molecular decomposition, it was obtained in \cite[Theorem 4.4]{BD15}
that the spaces $F_{p,q}^{\az,H,m_1}(\beta_1,\gamma_1)$ and
$F_{p,q}^{\az,H,m_2}(\beta_2,\gamma_2)$ coincide with equivalent norms
for $p,\ q> \frac n{n+1}$.
Moreover, this space coincides with the space $F_{p,q}^{\az,H,m}(\rn)$ mentioned in (ii) of
this remark (see Proposition 4.5 in \cite{BD15}).
\end{enumerate}
\end{rem}

\section{Molecular Characterizations\label{se-3}}

In this section, via some estimates of kernels associated to the Hermite operator,
we establish the molecular characterization of the Triebel-Lizorkin
space $F_{p(\cdot),q(\cdot)}^{\az(\cdot),H,m}(\rn)$. As a consequence,
we show that the space $F_{p(\cdot),q(\cdot)}^{\az(\cdot),H,m}(\rn)$ is independent of
the parameter $m$.

We first recall the definition of the molecule
associated to the Hermite operator $H$ and begin with some notations on dyadic cubes.
For any $v\in\zz$ and $k\in\zz^n$,
denote by $Q_{vk}$ the \emph{dyadic cube} $2^{-v}([0,1)^n+k)$,
$x_{Q_{vk}}:=2^{-v}k$
its \emph{lower left corner} and $\ell(Q_{vk})$ its \emph{side length}.
Let
$$\cd:=\{Q_{vk}:\ v\in\zz,\ k\in\zz^n\}$$
and,
for any $v\in\zz$, we set
$$\cd_v=\{Q\in\cd:\,\ell(Q)=2^{-v}\}.$$

\begin{defn}\label{def12-1}
Let $M,N\in\nn$.
A function $a$ is said to be an \emph{$(H,M,N)$-molecule} associated to the Hermite
operator $H$ if there
exist a function $b$ from the domain of the operator $(\sqrt{H})^M$
and a dyadic cube $Q\in\cd$ such that
\begin{enumerate}
\item[(i)] $a=(\sqrt{H})^Mb$;

\item[(ii)] $|(\sqrt{H})^kb(x)|\le\ell(Q)^{M-k}|Q|^{-\frac12}
\lf(1+\frac{|x-x_Q|}{\ell(Q)}\r)^{-n-N}$, for all $k =0,\cdots,2M$.
\end{enumerate}
If a function $a$ satisfies (i) and (ii) only for
$k=M,\cdots,2M$, then we say that $a$ is an \emph{$(H,M,N)$-zero level molecule}.

When $N=1$, we briefly write an \emph{$(H,M)$-molecule} (resp. \emph{$(H,M)$-zero level molecule})
instead of an \emph{$(H,M,1)$-molecule} (resp. \emph{$(H,M,1)$-zero level molecule}).
\end{defn}

\begin{rem}
We point out that the definition of molecule in Definition \ref{def12-1}
has a little bit difference from that in \cite[Definition 3.3]{BD15},
in which the authors consider the constant case and the size condition (ii) is replaced by
\begin{equation*}
|(\sqrt{H})^kb(x)|\le\ell(Q)^{M-k}|Q|^{\az/n-1/p}
\lf(1+\frac{|x-x_Q|}{\ell(Q)}\r)^{-n-N},
\end{equation*}
where $\az\in\rr$ and $p\in(0,\fz)$ both are constants.
The modification in Definition \ref{def12-1} makes it more convenient to deal with the
variable exponents $\az(\cdot)$ and $p(\cdot)$ in our setting.
\end{rem}

The corresponding sequence space to the variable Triebel-Lizorkin space is given as follows,
which is introduced in \cite{DHR09}.

\begin{defn}
Let $\az(\cdot),p(\cdot),q(\cdot)$ be as in Definition {\rm{\ref{de9-16a}}}.
Then the \emph{sequence space} $f_{p(\cdot),q(\cdot)}^{\az(\cdot)}(\rn)$
is defined to be the set of
all sequence $\{s_Q\}_{Q\in\cd_v,v\in\nn_0}\subset \cc$ such that
$$\lf\|\{s_Q\}_{Q\in\cd_v,v\in\nn_0}\r\|_{f_{p(\cdot),q(\cdot)}^{\az(\cdot)}(\rn)}
:=\lf\|\lf[\sum_{v\in\nn_0}\lf(2^{v\az(\cdot)}\sum_{Q\in\cd_v}|s_Q||Q|^{-\frac12}
\textbf{1}_Q\r)^{q(\cdot)}\r]^{1/q(\cdot)}\r\|_{L^{p(\cdot)}(\rn)}<\fz.$$
\end{defn}

Now we state the molecular characterization of the space
$F_{p(\cdot),q(\cdot)}^{\alpha(\cdot),H,m}(\rn)$.

\begin{thm}\label{thm10-18a}
Let $H$, $\az(\cdot),p(\cdot),q(\cdot)$ be as in Definition {\rm{\ref{de9-16a}}}
and $M,\ N,\ m\in\nn$.
If $f\in F_{p(\cdot),q(\cdot)}^{\az(\cdot),H,m}(\rn)$, then $f$ has the following
decomposition
\begin{equation}\label{04-12}
f=\sum_{v\in\nn_0}\sum_{Q\in\cd_v}s_Qm_Q,
\end{equation}
which converges in $\cs'(\rn)$, where
\begin{enumerate}
\item[$\bullet$] $\{m_Q\}_{Q\in\cd_v,v\in\nn }$
is a sequence of $(H,M,N)$-molecules,

\item[$\bullet$] $\{m_Q\}_{Q\in\cd_0}$
is a sequence of $(H,M)$-zero level molecules,

\item[$\bullet$] $\{s_Q\}_{Q\in\cd_v,v\in\nn_0}$ is a sequence in
$f_{p(\cdot),q(\cdot)}^{\az(\cdot)}(\rn)$.
\end{enumerate}Moreover, it holds true
\begin{equation}\label{eq10-9}
\lf\|\{s_Q\}_{Q\in\cd_v,v\in\nn_0}\r\|_
{f_{p(\cdot),q(\cdot)}^{\az(\cdot)}(\rn)}\le C
\|f\|_{F^{\az(\cdot),H,m}_{p(\cdot),q(\cdot)}(\rn)}
\end{equation}
with $C$ being a constant independent of $f$.
\end{thm}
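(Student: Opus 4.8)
The plan is to follow the Calder\'on reproducing formula strategy adapted to the Hermite setting. First, I would establish a reproducing identity of the form
$$f=\sum_{v\in\nn_0}\psi_v(\sqrt H)\phi_v(\sqrt H)f$$
(with suitable convergence in $\cs'(\rn)$), where $\phi_v$ are built from the heat semigroup so that $\phi_v(\sqrt H)f$ is essentially $(t\sqrt H)^me^{-t\sqrt H}f$ at scale $t\sim 2^{-v}$, and $\psi_v(\sqrt H)$ has kernel with fast decay and the cancellation/smoothness needed to produce molecules of arbitrary order $M$. Concretely I would pick functions $\Phi,\Psi\in\cs(\rr)$ with $\Phi(0)\ne0$, $\Psi$ vanishing to high order at $0$, and $\widehat{\Phi}(0)\widehat{\Psi}(0)+\sum_{v\ge1}\widehat{\Psi}(2^{-v}\cdot)\widehat{\Phi}(2^{-v}\cdot)\equiv 1$; by the spectral theorem this gives $f=\Phi(\sqrt H)^{*}\Phi(\sqrt H)f+\sum_{v\ge1}\Psi(\sqrt H)^*(\text{dilate})f$ with the $v=0$ term producing the zero-level molecule and the $v\ge1$ terms producing genuine molecules. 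Since $H$ is self-adjoint, $\Psi(\sqrt H)=(\sqrt H)^M\widetilde\Psi(\sqrt H)$ whenever $\Psi(\lambda)=\lambda^M\widetilde\Psi(\lambda)$, which is exactly the structure in Definition \ref{def12-1}(i)--(ii).

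Second, I would discretize: write each $\psi_v(\sqrt H)g$ as an integral against its kernel and replace the integral by a Riemann sum over the dyadic cubes $Q\in\cd_v$, sampling $g$ at the centers $x_Q$. Setting
$$s_Q:=|Q|^{1/2}\,\sup_{y\in Q}\big|(t\sqrt H)^m e^{-t\sqrt H}f(y)\big|\big|_{t=2^{-v}}\quad\text{(up to constants)},$$
or more precisely using an averaged/oscillation version over $Q$, and $m_Q:=s_Q^{-1}\cdot(\text{the }Q\text{-piece of }\psi_v(\sqrt H)\phi_v(\sqrt H)f)$, I would verify that $m_Q$ is an $(H,M,N)$-molecule. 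This verification is where Proposition \ref{prop10-16} and Lemma \ref{le7-21} enter: the kernel $p_{t,k}(x,y)$ has the decay $t^k/(t+|x-y|)^{n+k}$ and $(t\sqrt H)^k e^{-t\sqrt H}$ maps Schwartz to Schwartz with controlled bounds, which yields the pointwise bound $|(\sqrt H)^k b(x)|\lesssim \ell(Q)^{M-k}|Q|^{-1/2}(1+|x-x_Q|/\ell(Q))^{-n-N}$ after absorbing the normalization into $s_Q$; one needs $m$ large (as in \eqref{7-5a}) so that enough derivatives/decay are available to reach the molecule order $M$ and decay $N$.

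Third, I would prove the norm estimate \eqref{eq10-9}. The key point is to dominate
$$\Big(\sum_{v}2^{v\az(\cdot)q(\cdot)}\sum_{Q\in\cd_v}|s_Q||Q|^{-1/2}\cha_Q\Big)^{1/q(\cdot)}$$
by a Peetre-type maximal function of $(t\sqrt H)^me^{-t\sqrt H}f$ and then by $\big(\int_0^1[t^{-\az(\cdot)}|(t\sqrt H)^me^{-t\sqrt H}f|]^{q(\cdot)}\,dt/t\big)^{1/q(\cdot)}$ convolved with $\eta_{v,R}$; here I invoke Lemma \ref{lem10-17a} (the vector-valued convolution inequality on $L^{p(\cdot)}(\ell^{q(\cdot)})$) in place of the unavailable Fefferman--Stein inequality. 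The $v=0$ term is handled separately and controlled by $\|e^{-\sqrt H}f\|_{L^{p(\cdot)}(\rn)}$. The variable-exponent arithmetic — in particular passing the factor $2^{v\az(x)}$ through the discretization when $\az(\cdot)$ oscillates — is controlled using $\az\in C_{\loc}^{\log}(\rn)$, which is why the term $C_{\log}(\alpha)$ appears in \eqref{7-5a}: over a cube of side $2^{-v}$ the quantity $2^{v\az(x)}$ varies by at most a bounded factor, and globally the discrepancy is absorbed by the extra decay $R$ in $\eta_{v,R}$.

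The main obstacle I expect is the molecule verification in the second step: showing that the discretized pieces $m_Q$ genuinely satisfy \emph{all} the derivative bounds in Definition \ref{def12-1}(ii) for $k=0,\dots,2M$ uniformly, with the correct normalization, while simultaneously keeping the sampling coefficients $s_Q$ controlled by the Triebel--Lizorkin norm. This forces a careful choice of the auxiliary functions $\Phi,\Psi$ and a careful accounting of how many ``spare'' powers of $t$ and how much kernel decay are consumed — precisely the bookkeeping encoded in the lower bound on $m$ in \eqref{7-5a}. A secondary technical point is establishing $\cs'(\rn)$-convergence of \eqref{04-12}, which I would obtain from the Schwartz-space mapping property in Proposition \ref{prop10-16}(ii) together with the decay of the $s_Q$.
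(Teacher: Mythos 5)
Your overall strategy family (Calder\'on reproducing formula in the Hermite calculus, dyadic discretization, Lemma \ref{lem10-17a} in place of Fefferman--Stein, log-H\"older bookkeeping for $2^{v\az(x)}$) is the right one, but two steps in your plan have genuine gaps, and both are exactly where the paper's proof does something different. First, your space discretization is a Riemann sum sampling $g$ at the cube centers $x_Q$; that is not an exact identity, and you never control the sampling error. Making such a sampling scheme exact is the heavy Frazier--Jawerth/Petrushev--Xu frame machinery, and nothing in your sketch supplies it. The paper avoids the issue entirely: it applies Proposition \ref{prop10-9} with $l=m+M+N$, writes $(t\sqrt H)^{l}e^{-2t\sqrt H}f=(t\sqrt H)^{M+N}e^{-t\sqrt H}\big[(t\sqrt H)^{m}e^{-t\sqrt H}f\big]$, splits $\int_0^1=\sum_{v}\int_{2^{-v-1}}^{2^{-v}}$ and inserts $1=\sum_{Q\in\cd_v}\mathbf{1}_Q$ \emph{inside} the operator, so that $b_Q$ is an honest integral of $e^{-t\sqrt H}$ applied to $(t\sqrt H)^{m}e^{-t\sqrt H}f\cdot\mathbf{1}_Q$; the decomposition is then exact by construction, and the molecule bounds for all $k=0,\dots,2M$ follow directly from the kernel estimates of Lemma \ref{le7-21} (your worry about ``verifying all derivative bounds'' dissolves with this choice). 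Note also that this construction needs no largeness of $m$: the theorem is stated for arbitrary $m,M,N\in\nn$, the powers $M+N$ being supplied by the choice of $l$ in the reproducing formula; the restriction \eqref{7-5a} is only needed for the converse, Theorem \ref{thm10-9}, so your appeal to \eqref{7-5a} here proves a weaker statement than claimed.

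Second, in the coefficient estimate you pass from $s_Q$ (a supremum of $|(t\sqrt H)^m e^{-t\sqrt H}f|$ over the Carleson box $Q\times[2^{-v-1},2^{-v})$) to ``a Peetre-type maximal function'' and then to $\eta_{v,R}$ convolved with the square function. The second step is not automatic: in the classical Fourier setting such Plancherel--P\'olya-type bounds come from band-limitedness, which is unavailable for $(t\sqrt H)^m e^{-t\sqrt H}f$, and no Peetre maximal characterization of $F_{p(\cdot),q(\cdot)}^{\az(\cdot),H,m}(\rn)$ is established that you could invoke. The paper's substitute is the key missing idea in your sketch: since $(\sqrt H)^m e^{-t\sqrt H}f$ solves $(-\Delta_{x,t}+|x|^2)u=0$, the function $|(\sqrt H)^m e^{-t\sqrt H}f|^2$ is subharmonic, and \cite[Lemma 5.2]{AB07} lets one dominate the sup over $\wz Q=Q\times[2^{-v-1},2^{-v})$ by an $L^r$ average over an enlarged box with $r<\min\{1,p^-,q^-\}$; only after this mean-value step does the convolution bound with $\eta_{v,L}$, the correction \cite[Lemma 6.1]{DHR09} for $2^{v\az(\cdot)}$, Lemma \ref{lem10-17a}, and H\"older in $t$ give \eqref{eq10-9}. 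Your treatment of the zero-level term needs the same repair (subharmonicity of $|e^{-\sqrt H}f|^2$ plus the boundedness of $\cm$ on $L^{p(\cdot)}(\rn)$). Without these two ingredients the proposal does not close.
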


To prove Theorem \ref{thm10-18a},
we need the following Calder\'on reproducing formula,
which comes from \cite[p.\,437]{BD15} and is a
variant of \cite[Proposition 2.9]{BD15}.
For the completeness, we also present the proof of this formula here via \cite[Proposition 2.8]{BD15}
(see also Lemma \ref{prop10-16a} below).

\begin{prop}\label{prop10-9}
Let $H$ be the Hermite operator as in \eqref{3-22a},
$l\in\nn$ and $f\in\cs'(\rn)$. Then it holds true that
\begin{equation}\label{3-22b}
f=\frac{2^l}{(l-1)!}\int_0^1(t\sqrt{H})^{l}e^{-2t\sqrt{H}}f\frac{dt}t
+\sum_{k=0}^{l-1}\frac{2^k}{k!}
(\sqrt{H})^ke^{-2\sqrt{H}}f
\end{equation}
in $\cs'(\rn)$.
\end{prop}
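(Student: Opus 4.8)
The plan is to derive the formula from a one-parameter subordination identity for the semigroup $e^{-t\sqrt H}$, exactly as one proves the classical Calderón reproducing formula. First I would fix the elementary scalar identity: for every $\lambda>0$ and every $l\in\nn$,
\begin{equation*}
\frac{2^l}{(l-1)!}\int_0^1 (t\lambda)^l e^{-2t\lambda}\,\frac{dt}{t}
+\sum_{k=0}^{l-1}\frac{2^k}{k!}\lambda^k e^{-2\lambda}=1.
\end{equation*}
This is checked by repeated integration by parts in $t$: writing $\int_0^1 (t\lambda)^l e^{-2t\lambda}\frac{dt}{t}=\lambda^l\int_0^1 t^{l-1}e^{-2t\lambda}\,dt$ and integrating by parts $l$ times peels off precisely the finite sum, the boundary terms at $t=1$ producing the $e^{-2\lambda}$ terms and the boundary terms at $t=0$ vanishing because $l\ge 1$. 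Equivalently one can observe that both sides, as functions of $\lambda$, satisfy the same first-order ODE with the same value at $\lambda=0$.

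Next I would promote this scalar identity to an operator identity on $L^2(\rn)$ via the spectral decomposition of $H$ in the Hermite basis $\{\mathbf h_\az\}_{\az\in\nn_0^n}$: applying the scalar identity with $\lambda=\sqrt{2|\az|+n}$ to each eigenfunction and summing gives
\begin{equation*}
\mathrm{Id}=\frac{2^l}{(l-1)!}\int_0^1 (t\sqrt H)^l e^{-2t\sqrt H}\,\frac{dt}{t}
+\sum_{k=0}^{l-1}\frac{2^k}{k!}(\sqrt H)^k e^{-2\sqrt H}
\end{equation*}
first strongly on $L^2(\rn)$, and in particular on $\cs(\rn)$ (here one uses Proposition~\ref{prop10-16}(ii), which guarantees $(t\sqrt H)^l e^{-2t\sqrt H}\phi\in\cs(\rn)$ with the uniform bounds needed to make the $t$-integral converge as a Bochner integral in $\cs(\rn)$, so the integrated operator is well defined on Schwartz functions). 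Then for $f\in\cs'(\rn)$ and $\phi\in\cs(\rn)$ one simply dualizes: by the definition of $(t\sqrt H)^l e^{-2t\sqrt H}f$ and $(\sqrt H)^k e^{-2\sqrt H}f$ as distributions (the Remark after Proposition~\ref{prop10-16}),
\begin{equation*}
\lf\langle \frac{2^l}{(l-1)!}\int_0^1(t\sqrt H)^l e^{-2t\sqrt H}f\,\frac{dt}{t}
+\sum_{k=0}^{l-1}\frac{2^k}{k!}(\sqrt H)^k e^{-2\sqrt H}f,\ \phi\r\rangle
=\lf\langle f,\ \frac{2^l}{(l-1)!}\int_0^1(t\sqrt H)^l e^{-2t\sqrt H}\phi\,\frac{dt}{t}
+\sum_{k=0}^{l-1}\frac{2^k}{k!}(\sqrt H)^k e^{-2\sqrt H}\phi\r\rangle=\langle f,\phi\rangle,
\end{equation*}
which is \eqref{3-22b}; the interchange of $\langle f,\cdot\rangle$ with the $\cs(\rn)$-valued integral is legitimate because $f$ is continuous on $\cs(\rn)$ and the integrand is continuous and absolutely integrable there.

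The main obstacle is the justification of the $t\to 0$ behaviour of the integral $\int_0^1(t\sqrt H)^l e^{-2t\sqrt H}\phi\,\frac{dt}{t}$ as a convergent $\cs(\rn)$-valued (Bochner) integral, and, correspondingly, that $\int_0^1 (t\sqrt H)^l e^{-2t\sqrt H}f\,\frac{dt}{t}$ defines a genuine element of $\cs'(\rn)$ rather than merely a formal expression. Near $t=1$ everything is harmless; near $t=0$ one needs the bound from Proposition~\ref{prop10-16}(ii), namely $|x^\beta D^\gamma[(t\sqrt H)^l e^{-2t\sqrt H}\phi](x)|\ls \min\{1/t,\,t^{l-1/2}+t^{l+1}\}$, so that the seminorms of $(t\sqrt H)^l e^{-2t\sqrt H}\phi$ are $O(t^{l-1/2})$ as $t\to 0^+$; since $l\ge 1$ this gives an integrable singularity against $dt/t$, and the partial integrals form a Cauchy net in $\cs(\rn)$. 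Since the statement explicitly says the formula comes from \cite[p.\,437]{BD15} and is a variant of \cite[Proposition 2.9]{BD15}, I would invoke \cite[Proposition 2.8]{BD15} (Lemma~\ref{prop10-16a} below) for precisely this convergence input and keep the rest of the argument as above.
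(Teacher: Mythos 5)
Your proposal is correct, but it runs along a different track than the paper's own argument. The paper integrates by parts directly at the level of the $\cs'(\rn)$-valued integral: for $\dz\in(0,1)$ it computes $\int_\dz^1(t\sqrt H)^l e^{-2t\sqrt H}f\,\frac{dt}{t}$ as a telescoping sum of boundary terms $(t\sqrt H)^k e^{-2t\sqrt H}f$ at $t=\dz$ and $t=1$, and then lets $\dz\to0$ using Lemma \ref{prop10-16a} (that $(\dz\sqrt H)^k e^{-2\dz\sqrt H}f\to0$ for $k\ge1$ and $e^{-2\dz\sqrt H}f\to f$ in $\cs'(\rn)$); the whole proof takes place in $\cs'(\rn)$ and its key input is precisely that limit lemma. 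You instead prove the scalar incomplete-gamma identity $\frac{2^l}{(l-1)!}\int_0^1(t\lz)^l e^{-2t\lz}\,\frac{dt}{t}+\sum_{k=0}^{l-1}\frac{2^k}{k!}\lz^k e^{-2\lz}=1$ (which is the same integration by parts, done on scalars and correctly matching the coefficients), promote it to an operator identity via the spectral decomposition in the Hermite basis, verify that on Schwartz functions the $t$-integral converges in the topology of $\cs(\rn)$ using the seminorm bounds of Proposition \ref{prop10-16}(ii) (seminorms $O(t^{l-1/2})$, integrable against $dt/t$ since $l\ge1$; note $(t\sqrt H)^le^{-2t\sqrt H}=2^{-l}(2t\sqrt H)^le^{-(2t)\sqrt H}$ so the stated bound applies at time $2t$), and then transpose by duality, using self-adjointness of the functional calculus so that the distributional definition in the Remark applies. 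What your route buys is that the $\dz\to0$ issue is handled entirely on the test-function side, so Lemma \ref{prop10-16a} is not really needed (you cite it only as an optional shortcut), at the price of invoking the spectral theorem and a Bochner-integral argument in the Fr\'echet space $\cs(\rn)$ (where you assert, but do not verify, continuity of $t\mapsto(t\sqrt H)^le^{-2t\sqrt H}\phi$ in $\cs(\rn)$ — a standard but nontrivial point); the paper's route is shorter and stays in $\cs'(\rn)$, but leans on the convergence lemma imported from \cite{BD15}. Both arguments are sound and yield the same formula.
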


The following properties are just \cite[Proposition 2.8]{BD15}.

\begin{lem}\label{prop10-16a}
Let $H$ be the Hermite operator as in \eqref{3-22a} and $f\in\cs'(\rn)$. Then
\begin{enumerate}
\item[{\rm(i)}]
for any $k\in\nn_0$, $\lim_{t\rightarrow\fz}(t\sqrt{H})^ke^{-t\sqrt{H}}f=0$ in $\cs'(\rn);$

\item[{\rm(ii)}] for any $k\in\nn$,
$\lim_{t\rightarrow0}(t\sqrt{H})^ke^{-t\sqrt{H}}f=0$ in $\cs'(\rn);$

\item[{\rm(iii)}] $\lim_{t\rightarrow0}(I-e^{-t\sqrt{H}})f=0$ in $\cs'(\rn).$
\end{enumerate}
\end{lem}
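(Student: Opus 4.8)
The plan is to prove the three assertions by duality, reducing each to a convergence statement in the Schwartz class $\cs(\rn)$ and then invoking the quantitative estimate of Proposition \ref{prop10-16}(ii). Fix $\phi\in\cs(\rn)$. Since $f\in\cs'(\rn)$, there are a constant $C>0$ and an integer $L\in\nn_0$ such that
$$|\langle f,\psi\rangle|\le C\sum_{|\beta|\le L,\,|\gamma|\le L}\sup_{x\in\rn}\lf|x^\beta D^\gamma\psi(x)\r|\qquad\text{for every }\psi\in\cs(\rn).$$
Unwinding the definition given in the Remark after Proposition \ref{prop10-16} (part (ii) there), one has $\langle(t\sqrt H)^k e^{-t\sqrt H}f,\phi\rangle=\langle f,(t\sqrt H)^k e^{-t\sqrt H}\phi\rangle$, and, since $(I-e^{-t\sqrt H})f:=f-e^{-t\sqrt H}f$, also $\langle(I-e^{-t\sqrt H})f,\phi\rangle=\langle f,(I-e^{-t\sqrt H})\phi\rangle$. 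Therefore it suffices to prove that $(t\sqrt H)^k e^{-t\sqrt H}\phi\to0$ in $\cs(\rn)$ in the relevant limits, and that $(I-e^{-t\sqrt H})\phi\to0$ in $\cs(\rn)$ as $t\to0$.

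Assertions (i) and (ii) are then immediate from Proposition \ref{prop10-16}(ii): for fixed $\phi$, $\beta$, $\gamma$ it yields
$$\sup_{x\in\rn}\lf|x^\beta D^\gamma[(t\sqrt H)^k e^{-t\sqrt H}\phi](x)\r|\ls\min\lf\{1/t,\ t^{k-1/2}+t^{k+1}\r\},$$
with the implicit constant independent of $t$. As $t\to\fz$ the right-hand side is $\le1/t\to0$ for every $k\in\nn_0$, which gives (i); as $t\to0$, since $k\ge1$ it is $\le t^{k-1/2}+t^{k+1}\to0$, which gives (ii).

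For (iii) I would use the representation $I-e^{-t\sqrt H}=\int_0^t\sqrt H e^{-s\sqrt H}\,ds=\int_0^t\frac1s(s\sqrt H)e^{-s\sqrt H}\,ds$. Applied to $\phi$, this identity holds in $L^2(\rn)$ by the fundamental theorem of calculus for the $C_0$-semigroup $e^{-s\sqrt H}$ (note $\phi\in\cs(\rn)$ lies in the domain of $\sqrt H$), and it holds in $\cs(\rn)$ too: by Proposition \ref{prop10-16}(ii) with $k=1$, each Schwartz seminorm of the integrand $\frac1s(s\sqrt H)e^{-s\sqrt H}\phi$ is $\ls\frac1s\min\{1/s,\,s^{1/2}+s^2\}\ls s^{-1/2}$ for $s\in(0,1]$, hence integrable near $0$, so the integral converges in $\cs(\rn)$ and, via the continuous inclusion $\cs(\rn)\hookrightarrow L^2(\rn)$, agrees with the $L^2$ integral. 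Hence, for fixed $\phi$, $\beta$, $\gamma$,
$$\sup_{x\in\rn}\lf|x^\beta D^\gamma(I-e^{-t\sqrt H})\phi(x)\r|\ls\int_0^t s^{-1/2}\,ds\ls\sqrt t\longrightarrow0\qquad(t\to0),$$
so $(I-e^{-t\sqrt H})\phi\to0$ in $\cs(\rn)$ and thus $\langle f,(I-e^{-t\sqrt H})\phi\rangle\to0$.

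The one step needing genuine care is the behaviour at the singular endpoint $s=0$ in the representation used for (iii): the identity and the interchange of $x^\beta D^\gamma$ with the $s$-integral are first verified on $[\ez,t]$ with $\ez>0$, where $s\mapsto e^{-s\sqrt H}\phi$ is smooth and all quantities are controlled by Proposition \ref{prop10-16}(ii); one then lets $\ez\to0^+$, using the integrable majorant $C_{\phi,\beta,\gamma}\,s^{-1/2}$ and the dominated convergence theorem, and identifies the resulting boundary term through the strong continuity of $e^{-s\sqrt H}$ at $s=0$ in $L^2(\rn)$. Once the duality reduction of the first paragraph is in hand, parts (i) and (ii) need no further work.
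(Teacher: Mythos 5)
Your proof is correct. The paper itself gives no argument for this lemma --- it simply cites \cite[Proposition 2.8]{BD15} --- and your duality reduction to convergence of $(t\sqrt H)^k e^{-t\sqrt H}\phi$ and $(I-e^{-t\sqrt H})\phi$ in $\cs(\rn)$, driven by the seminorm bound $\min\{1/t,\,t^{k-1/2}+t^{k+1}\}$ of Proposition \ref{prop10-16}(ii) and, for (iii), the semigroup identity $I-e^{-t\sqrt H}=\int_0^t\sqrt H e^{-s\sqrt H}\,ds$ with the integrable majorant $s^{-1/2}$, is exactly the standard argument behind the cited result; your careful treatment of the endpoint $s=0$ supplies details the reference leaves implicit.
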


\begin{proof}[Proof of Proposition \ref{prop10-9}]
Using integration by part and some properties of semigroup, we find that,
for any $\dz\in(0,\fz)$,
\begin{align}\label{eq10-16}
\int_\delta^1(t\sqrt{H})^{l}e^{-2t\sqrt{H}}f\frac{dt}t
&=-\sum_{k=0}^{l-1}\frac{(l-1)!}{2^{l-k}k!}(t\sqrt{H})^ke^{-2t\sqrt{H}}f
\Big{|}_{\dz}^{1}\\
&=-\sum_{k=0}^{l-1}\frac{(l-1)!}{2^{l-k}k!}
\lf[(\sqrt{H})^ke^{-2\sqrt{H}}f-\dz^k(\sqrt{H})^ke^{-2\dz\sqrt{H}}f\r].\noz
\end{align}
By Lemma \ref{prop10-16a}, we know that, for any $k\in\nn$,
$$\lim_{\dz\rightarrow0}(\dz\sqrt{H})^ke^{-2\dz\sqrt{H}}f=0\quad
{\rm and}\quad \lim_{\dz\rightarrow0}e^{-2\dz\sqrt{H}}f=f$$
both in $\cs'(\rn)$.
Letting $\dz\rightarrow0$ in \eqref{eq10-16}, we obtain
$$\int_0^1(t\sqrt{H})^{l}e^{-2t\sqrt{H}}f\frac{dt}t
=-\sum_{k=0}^{l-1}\frac{(l-1)!}{2^{l-k}k!}(\sqrt{H})^ke^{-2\sqrt{H}}f
+\frac{(l-1)!}{2^l}f,\quad {\rm in}\quad \cs'(\rn),$$
which implies \eqref{3-22b}. This finishes the proof of Proposition \ref{prop10-9}.
\end{proof}

\begin{proof}[Proof of Theorem \ref{thm10-18a}]
Let $f\in F^{\az(\cdot),H,m}_{p(\cdot),q(\cdot)}(\rn)$.
We will complete the proof by three steps.

\emph{Step 1)} \emph{We establish the decomposition \eqref{04-12}}.
By Proposition \ref{prop10-9} with $l=m+M+N$, we find that
$$f=\frac{2^{m+M+N}}{(m+M+N-1)!}\int_0^1(t\sqrt{H})^{M+N}e^{-t\sqrt{H}}
(t\sqrt{H})^me^{-t\sqrt{H}}f\frac{dt}t+\sum_{k=0}^{m+M+N-1}\frac{2^k}{k!}
(\sqrt{H})^ke^{-2\sqrt{H}}f,$$
which converges in $\cs'(\rn)$.
For each $v\in\nn$ and $Q\in\cd_v$, let
$$s_Q:=|Q|^{\frac12}\sup_{(y,t)\in Q\times[2^{-v-1},2^{-v})}|(t\sqrt{H})^me^{-t\sqrt{H}}f(y)|
\quad{\rm and}\quad m_Q:=(\sqrt{H})^Mb_Q,$$
 where
$$b_Q:=\frac{2^{m+M+N}}{(m+M+N-1)!}\frac1{s_Q}\int_{2^{-v-1}}^{2^{-v}}
t^Me^{-t\sqrt{H}}(t\sqrt{H})^N\lf[(t\sqrt{H})^me^{-t\sqrt{H}}f\cdot\textbf{1}_Q\r]\frac{dt}t.$$
For each $Q\in\cd_0$, we set
$$s_Q:=\sup_{y\in Q}|e^{-\sqrt{H}}f(y)|\quad{\rm and}\quad
m_Q:=\frac1{s_Q}\sum_{k=0}^{m+M+N-1}\frac{2^k}{k!}(\sqrt{H})^k
e^{-\sqrt{H}}\lf(e^{-\sqrt{H}}f\cdot\textbf{1}_Q\r).$$
Then it is easy to see that \eqref{04-12} holds true.

\emph{Step 2)} \emph{We prove that $\{m_Q\}_{Q\in\cd_v,v\in\nn_0}$ are molecules}.
Considering $\{m_Q\}_{Q\in \cd_v}$ with $v\in\nn$,
by Lemma \ref{le7-21}, we know that, for any $k=0,\cdots,2M$ and $x\in \rn$,
\begin{align*}
\lf|(\sqrt{H})^kb_Q(x)\r|
&\sim\lf|\frac1{s_Q}
\int_{2^{-v-1}}^{2^{-v}}t^{M-k}(t\sqrt{H})^{N+k}e^{-t\sqrt{H}}
\lf[(t\sqrt{H})^me^{-t\sqrt{H}}f\cdot\textbf{1}_Q\r](x)\frac{dt}t\r|\\
&\ls\frac1{s_Q}\int_{2^{-v-1}}^{2^{-v}}t^{M-k}
\int_Q|p_{t,N+k}(y,x)||(t\sqrt{H})^me^{-t\sqrt{H}}f(y)|dy\frac{dt}t\\
&\ls\frac1{s_Q}
\int_{2^{-v-1}}^{2^{-v}}\int_Qt^{M-k}|(t\sqrt{H})^me^{-t\sqrt{H}}f(y)|
\frac{t}{(t+|x-y|)^{n+N}}dy\frac{dt}t\\
&\ls|Q|^{-\frac12+\frac{M-k}n}\lf[1+\frac{|x-x_Q|}{\ell(Q)}\r]^{-n-N}.
\end{align*}
This implies that, for any $Q\in \cd_v$ with $v\in\nn$,
$m_Q$ is a multiple of an $(H,M,N)$-molecule.

For $m_Q$ with $Q\in\cd_0$, we first observe that
$$m_Q=(\sqrt H)^M\lf(\frac1{s_Q}\sum_{k=0}^{m+M+N-1}\frac{2^k}{k!}(\sqrt H)^{k-M}
e^{-\sqrt{H}}\lf(e^{-\sqrt H}f\cdot\textbf{1}_Q\r)\r)=:(\sqrt H)^M\widetilde{b}_Q.$$
Moreover, by Lemma \ref{le7-21} again, we find that, for any $i=M,\cdots,2M$
and $x\in \rn$,
\begin{align*}
|(\sqrt H)^i \widetilde{b}(x)|&=\lf|\frac1{s_Q}\sum_{k=0}^{m+M+N-1}\frac{2^k}{k!}
(\sqrt{H})^{k+i-M}e^{-\sqrt{H}}(e^{-\sqrt{H}}f\cdot\textbf{1}_Q)(x)\r|\\
&\ls\frac1{s_Q}\sum_{k=0}^{m+M+N-1}\frac{2^k}{k!}\int_Q\frac{1}{(1+|x-y|)^{n+\max\{k+i-M,1\}}}
|e^{-\sqrt{H}}f(y)|dy\\
&\ls\int_Q\frac1{(1+|x-y|)^{n+1}}dy\\
&\ls\lf(1+|x-x_Q|\r)^{-n-1}.
\end{align*}
Therefore, in this case, $m_Q$ is a multiple of an $(H,M)$-zero level molecule.

\emph{Step 2)} \emph{We now prove \eqref{eq10-9}}.
To this end, it suffices to prove
\begin{align}\label{eq10-9a}
\|\{s_Q\}_{Q\in\cd_v,v\in\nn}\|_{f_{p(\cdot),q(\cdot)}^{\alpha(\cdot)}(\rn)}
\ls\lf\|\lf[\int_0^1\lf(t^{-\az(\cdot)}|(t\sqrt{H})^me^{-t\sqrt{H}}f|
\r)^{q(\cdot)}\frac{dt}t\r]^{1/q(\cdot)}\r\|_{L^{p(\cdot)}(\rn)}
\end{align}
and
\begin{equation}\label{eq10-9b}
\lf\|\sum_{Q\in\cd_0}|s_Q|\textbf{1}_Q\r\|_{L^{p(\cdot)}(\rn)}
\ls\lf\|e^{-\sqrt{H}}f\r\|_{L^{p(\cdot)}(\rn)}.
\end{equation}

We first consider \eqref{eq10-9a}. Indeed, since
$(\sqrt{H})^me^{-t\sqrt{H}}f$ is a solution of the equation
$(-\Delta_{x,t}+|x|^2)u=0$, where $\Delta_{x,t}(u)=u_{tt}+\Delta u$,
it follows that $|(\sqrt{H})^me^{-t\sqrt{H}}f|^2$ is subharmonic (see \cite[p.\,1988]{AB07}).
Due to \cite[Lemma 5.2]{AB07}, it is clear to see that
\begin{equation}\label{eq10-17}
\sup_{(y,t)\in\wz Q}|(\sqrt{H})^me^{-t\sqrt{H}}f(y)|\ls
\lf(\frac1{|\wz Q|}\int_{\frac32\wz Q}|(\sqrt{H})^me^{-t\sqrt{H}}f(y)|^rdydt\r)^{1/r},
\end{equation}
where $\wz Q=Q\times[2^{-v-1},2^{-v})$ is a cube in $\rr^{n+1}$ and $0<r<\min\{1,p^-,q^-\}$.
Moreover,
$$\sup_{(y,t)\in\wz Q}|(t\sqrt{H})^me^{-t\sqrt{H}}f(y)|
\ls\lf(\frac1{|Q|}\int_{\frac342^{-v-1}}^{\frac982^{-v}}\int_{\frac32Q}
|(t\sqrt{H})^me^{-t\sqrt{H}}f(y)|^rdy\frac{dt}t\r)^{1/r}.$$
Therefore, by the Minkowski inequality and choosing $L>n+rC_{\log}(\az)$ large enough,
we obtain, for any $x\in Q$,
\begin{align*}
|Q|^{-\frac12}|s_Q|
&\ls\lf(\frac1{|Q|}\int_{\frac342^{-v-1}}^{\frac982^{-v}}\int_{\frac32Q}
|(t\sqrt{H})^me^{-t\sqrt{H}}f(y)|^rdy\frac{dt}t\r)^{1/r}\\ \noz
&\sim\lf\{\int_{\frac32Q}\frac{2^{vn}}{(1+2^v|x-y|)^{L}}
\lf[\int_{\frac342^{-v-1}}^{\frac982^{-v}}
|(t\sqrt{H})^me^{-t\sqrt{H}}f(y)|^r\frac{dt}t\r]dy\r\}^{1/r}\\ \noz
&\ls\lf\{\eta_{v,L}\ast\lf[\int_{\frac342^{-v-1}}^{\frac982^{-v}}
|(t\sqrt{H})^me^{-t\sqrt{H}}f|^r\frac{dt}t\r]\r\}^{1/r}(x).\noz
\end{align*}
This, together with \cite[Lemma 6.1]{DHR09}, further implies that, for any $x\in Q$,
\begin{align*}
&2^{v\az(x)}\sum_{Q\in\cd_v}|Q|^{-\frac12}|s_Q|\textbf{1}_Q(x)\\
&\hs\ls\sum_{Q\in\cd_v}\lf\{\eta_{v,L-rC_{\log}(\alpha)}\ast
\lf[\int_{\frac342^{-v-1}}^{\frac982^{-v}}
2^{rv\az(\cdot)}|(t\sqrt{H})^me^{-t\sqrt{H}}f|^r\frac{dt}t\r]\r\}^{1/r}(x)\cdot\textbf{1}_Q(x)\\
&\hs\ls\lf\{\eta_{v,L-rC_{\log}(\alpha)}\ast
\lf[\int_{\frac342^{-v-1}}^{\frac982^{-v}}\big(2^{v\az(\cdot)}
|(t\sqrt{H})^me^{-t\sqrt{H}}f|\big)^r\frac{dt}t\r]\r\}^{1/r}(x).
\end{align*}
Using Lemma \ref{lem10-17a} and the facts that $p^-/r>1$, $q^-/r>1$, we then find that
\begin{align*}
&\|\{s_Q\}_{Q\in\cd_v,v\in\nn}\|_{f_{p(\cdot),q(\cdot)}^{\alpha(\cdot)}(\rn)}\\
&\quad\ls\lf\|\lf\{\sum_{v\in\nn }\lf\{\eta_{v,L-rC_{\log}(\alpha)}\ast
\lf[\int_{\frac342^{-v-1}}^{\frac982^{-v}}
\big(t^{-\az(\cdot)}|(t\sqrt{H})^me^{-t\sqrt{H}}f|\big)^r\frac{dt}t\r]\r\}^{q(\cdot)/r}
\r\}^{1/q(\cdot)}\r\|_{L^{p(\cdot)}(\rn)}\\
&\quad\ls\lf\|\lf\{\sum_{v\in\nn }\lf[\int_{\frac342^{-v-1}}^{\frac982^{-v}}
\big(t^{-\az(\cdot)}|(t\sqrt{H})^me^{-t\sqrt{H}}f|\big)^r\frac{dt}t\r]^{q(\cdot)/r}
\r\}^{1/q(\cdot)}\r\|_{L^{p(\cdot)}(\rn)}.
\end{align*}
By the H\"older inequality, we finally conclude that
\begin{align*}
\|\{s_Q\}_{Q\in\cd_v,v\in\nn}\|_{f_{p(\cdot),q(\cdot)}^{\alpha(\cdot)}(\rn)}
&\ls\lf\|\lf[\sum_{v\in\nn }\int_{\frac342^{-v-1}}^{\frac982^{-v}}\lf(t^{-\az(\cdot)}
|(t\sqrt{H})^me^{-t\sqrt{H}}f|\r)^{q(\cdot)}\frac{dt}t\r]^{1/q(\cdot)}\r\|_{L^{p(\cdot)}(\rn)}\\
&\ls\lf\|\lf[\int_0^1\lf(t^{-\az(\cdot)}
|(t\sqrt{H})^me^{-t\sqrt{H}}f|\r)^{q(\cdot)}\frac{dt}t\r]^{1/q(\cdot)}\r\|_{L^{p(\cdot)}(\rn)}.
\end{align*}

To prove \eqref{eq10-9b}, we conclude by an argument similar to that used in
\eqref{eq10-17} that $|e^{-\sqrt{H}}f|^2$ is subharmonic. Thus, for any
$Q\in\cd_0$ and any $x\in Q$,
\begin{align*}
|s_Q|&=\sup_{y\in Q}|e^{-\sqrt{H}}f(y)|
\ls\lf(\frac1{|\frac32Q|}\int_{\frac32Q}|e^{-\sqrt{H}}f(y)|^{\wz{r}}dy\r)^{1/\wz{r}}
\ls\lf[\mathcal{M}\lf(|e^{-\sqrt{H}}f|^{\wz{r}}\r)(x)\r]^{1/{\wz{r}}},
\end{align*}
where $0<\wz{r}<\min\{1,p^-\}$. From this and the boundedness of Hardy-Littlewood maximal operator
on variable Lebesgue space(see Remark \ref{rem5-20}(i)), we deduce that
\begin{align*}
\lf\|\sum_{Q\in\cd_0}|s_Q|\textbf{1}_Q\r\|_{L^{p(\cdot)}(\rn)}&\ls
\lf\|\lf[\mathcal{M}\lf(|e^{-\sqrt{H}}f|^{\wz r}\r)(x)\r]^{1/{\wz r}}\r\|_{L^{p(\cdot)}(\rn)}\ls
\lf\|e^{-\sqrt{H}}f\r\|_{L^{p(\cdot)}(\rn)}.
\end{align*}
Therefore, \eqref{eq10-9a} and \eqref{eq10-9b} hold true,
which completes the proof of Theorem \ref{thm10-18a}.
\end{proof}

\begin{thm}\label{thm10-9}
Let $\az(\cdot),p(\cdot),q(\cdot)$ be as in Definition {\rm \ref{de9-16a}}
and $m$ as in \eqref{7-5a}. Suppose that
$N\in \nn\cap(\frac n{\min\{1,p^-,q^-\}}-n+C_{\log}(\alpha),\fz)$
satisfies $m>\max\{\az^+,0\}+N+n$, and $ M\in\nn$ with $M>n+N+\max\{\az^-,-\az^-\}$.
If
$$f:=\sum_{v\in\nn_0}\sum_{Q\in\cd_v}s_Qm_Q$$
converges in $\cs'(\rn)$, where
\begin{enumerate}
\item[$\bullet$] $\{m_Q\}_{Q\in\cd_v,v\in\nn }$ is a sequence of $(H,M,N)$-molecules,
\item[$\bullet$] $\{m_Q\}_{Q\in\cd_0}$ is a sequence of $(H,M)$-zero level molecules,
\item[$\bullet$] $\{s_Q\}_{Q\in\cd_v,v\in\nn_0}$ is a sequence in
$f_{p(\cdot),q(\cdot)}^{\az(\cdot)}(\rn)$,
\end{enumerate}
then $f$ belongs to the space $F_{p(\cdot),q(\cdot)}^{\az(\cdot),H,m}(\rn)$.
Moreover, we have
$$\|f\|_{F_{p(\cdot),q(\cdot)}^{\az(\cdot),H,m}(\rn)}\le C
\lf\|\{s_Q\}_{Q\in\cd_v,v\in\nn_0}\r\|_{f_{p(\cdot),q(\cdot)}^{\az(\cdot)}(\rn)}$$
with $C$ being a constant independent of the sequence $\{s_Q\}_{Q\in\cd_v,v\in\nn_0}$.
\end{thm}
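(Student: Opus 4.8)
\textbf{Proof proposal for Theorem \ref{thm10-9}.}
The plan is to estimate the two pieces of the norm $\|f\|_{F_{p(\cdot),q(\cdot)}^{\az(\cdot),H,m}(\rn)}$ separately, namely $\|e^{-\sqrt H}f\|_{L^{p(\cdot)}(\rn)}$ and the tent-space-type integral, by first fixing $v'\in\nn_0$ and $t\in[2^{-v'-1},2^{-v'})$ and decomposing the action of $(t\sqrt H)^m e^{-t\sqrt H}$ on each molecule $m_Q=(\sqrt H)^M b_Q$ according to whether the dyadic scale $\ell(Q)=2^{-v}$ is larger or smaller than $t$. Writing $m=(t\sqrt H)^m e^{-t\sqrt H}$ applied to $s_Q m_Q$, one moves all powers of $\sqrt H$ onto the molecule: when $v\ge v'$ (small cube) we keep the representation $m_Q=(\sqrt H)^M b_Q$ and bound $(t\sqrt H)^{m+M}e^{-t\sqrt H}$ against $b_Q$ using Lemma \ref{le7-21}(ii) together with the size estimate in Definition \ref{def12-1}(ii) with $k=0$; when $v<v'$ (large cube) we instead land the $2M$ derivatives on $b_Q$, using Definition \ref{def12-1}(ii) with $k=2M$ and the heat-kernel bound with $t^{m}/(t+|x-y|)^{n+m}$. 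In each regime the resulting kernel integral against $\bigl(1+|y-x_Q|/\ell(Q)\bigr)^{-n-N}$ produces, after integrating in $y$, a factor of the form $\min\{(2^{-v}/t)^{a},(t/2^{-v})^{b}\}$ times $2^{-v n/2}$ times $\bigl(1+2^{\min(v,v')}|x-x_Q|\bigr)^{-n-(N\wedge m)}$; here the conditions $m>\max\{\az^+,0\}+N+n$ and $M>n+N+\max\{\az^-,-\az^-\}$ are exactly what is needed to make the exponents $a,b$ strictly positive after the later summation in $v$ and to absorb the smoothness weight $t^{-\az(\cdot)}$, while the hypothesis $N>n/\min\{1,p^-,q^-\}-n+C_{\log}(\alpha)$ guarantees that the spatial tails are summable after applying the maximal/convolution machinery.

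The next step is to sum over $Q\in\cd_v$ for fixed $v$: using that the $x_Q$ are $2^{-v}$-separated, the sum $\sum_{Q\in\cd_v}|s_Q|\,2^{-vn/2}\bigl(1+2^{\min(v,v')}|x-x_Q|\bigr)^{-n-(N\wedge m)}$ is pointwise controlled by a constant times $\eta_{\min(v,v'),L}\ast\bigl(\sum_{Q\in\cd_v}|s_Q|\,|Q|^{-1/2}\cha_Q\bigr)(x)$ for a suitable $L\in(n+ r C_{\log}(\alpha),\fz)$ with $0<r<\min\{1,p^-,q^-\}$, by the standard pointwise estimate comparing a sum of translated bumps with a convolution (as in \cite[Lemma 6.1]{DHR09} and the estimates already used in the proof of Theorem \ref{thm10-18a}). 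Inserting the weight $t^{-\az(x)}\sim 2^{v'\az(x)}$, moving it inside via \cite[Lemma 6.1]{DHR09} at the cost of enlarging the tail, taking the $\ell^{q(\cdot)}$-quasi-norm in $v'$ over $[0,1]$ (which for each $v'$ only receives contributions from the geometrically decaying $\min\{\cdots\}$ factors), and then applying the Minkowski/Hölder inequality in the auxiliary exponent $r$, one arrives at a double sum of the schematic form
\begin{align*}
\lf\|\lf\{\sum_{v'\in\nn_0}\lf(\sum_{v\in\nn_0}2^{-|v-v'|\varepsilon}\,
\eta_{\min(v,v'),L-rC_{\log}(\alpha)}\ast g_v\r)^{q(\cdot)}\r\}^{1/q(\cdot)}\r\|_{L^{p(\cdot)}(\rn)},
\end{align*}
where $g_v:=\bigl(2^{v\az(\cdot)}\sum_{Q\in\cd_v}|s_Q||Q|^{-1/2}\cha_Q\bigr)^r$ and $\varepsilon>0$. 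The geometric factor $2^{-|v-v'|\varepsilon}$ lets us sum in one of the indices (after a discrete Young/Hölder step in $\ell^{q(\cdot)}$, using $q^-/r>1$), reducing matters to $\bigl\|\{(\eta_{v,L'}\ast g_v)^{1/r}\}\bigr\|_{f^{0}_{p(\cdot)/r,q(\cdot)/r}}$ up to the weight already built in, at which point Lemma \ref{lem10-17a} (applicable since $p^-/r,q^-/r>1$ and $L'>n$) finishes the estimate by the sequence quasi-norm $\|\{s_Q\}\|_{f_{p(\cdot),q(\cdot)}^{\az(\cdot)}(\rn)}$. The term $\|e^{-\sqrt H}f\|_{L^{p(\cdot)}(\rn)}$ is handled in the same way but is simpler: only $v'=0$ appears, the zero-level molecules in $\cd_0$ contribute via Lemma \ref{le7-21} and the remark after Proposition \ref{prop10-16}, and there is no $t$-integral, so one directly gets a single $\eta_{0,L}$-convolution bounded on $L^{p(\cdot)}$ via the Hardy–Littlewood maximal operator (Remark \ref{rem5-20}(i)).

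The main obstacle I expect is the bookkeeping in the small-cube regime $v<v'$, where one must land all $2M$ powers of $\sqrt H$ onto $b_Q$ and still extract enough decay in $t/2^{-v}$: this forces the lower bound on $M$ and requires carefully tracking the constant $C_{\log}(\alpha)$ through the substitution $t^{-\az(x)}\to t^{-\az(y)}$ across the kernel, since $|x-y|$ can be comparable to $\ell(Q)\gg t$. A secondary technical point is justifying that the representation $f=\sum_{v,Q}s_Q m_Q$ may be differentiated term-by-term under $(t\sqrt H)^m e^{-t\sqrt H}$; this follows from Proposition \ref{prop10-16}(ii) and the $\cs'(\rn)$-convergence hypothesis, exactly as in \cite[Section 3]{BD15}, together with the absolute convergence of the majorant series just established. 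Once these points are in place the remaining steps are the routine variable-exponent manipulations (the Hölder inequality in $L^{p(\cdot)}$, \cite[Lemma 6.1]{DHR09}, and Lemma \ref{lem10-17a}) that already appear in the proof of Theorem \ref{thm10-18a}.
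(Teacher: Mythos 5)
Your overall architecture is the same as the paper's: split the norm into $\|e^{-\sqrt H}f\|_{L^{p(\cdot)}(\rn)}$ and the square-function piece, decompose $t\in(0,1)$ dyadically, split the cube sum according to whether $\ell(Q)=2^{-v}$ is larger or smaller than $t$, convert sums of translated bumps into $\eta_{v,R}$-convolutions (Lemma \ref{le7-21b}), move $t^{-\az(x)}$ inside via \cite[Lemma 6.1]{DHR09}, sum the geometric factors, and finish with Lemma \ref{lem10-17a}. The genuine gap is in the pointwise molecular estimate you propose for the regime $t\le\ell(Q)$ (your ``large cube'' case $v<v'$). The paper takes this estimate from Lemma \ref{le7-21c} (that is, \cite[Lemma 3.6]{BD15}), whose first bound carries the decay $(t2^{v})^{m-N-n}$; this is exactly what the hypothesis $m>\max\{\az^+,0\}+N+n$ is for, since after inserting the weights one must beat a factor as large as $2^{(k-v)\az^+}$. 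Your proposed derivation --- ``land the $2M$ derivatives on $b_Q$'' while simultaneously using the kernel bound $t^{m}/(t+|x-y|)^{n+m}$ --- is inconsistent as written (moving $M$ extra powers of $\sqrt H$ onto $b_Q$ leaves the operator $(t\sqrt H)^{m-M}e^{-t\sqrt H}$, not $(t\sqrt H)^{m}e^{-t\sqrt H}$), it implicitly needs $m\ge M$, which the hypotheses do not give (there is no upper bound on $M$ in terms of $m$), and the gain it would produce is of order $(t2^{v})^{M}$, governed by $M$ rather than $m$; since the only assumption on $M$ is $M>n+N+\max\{\az^-,-\az^-\}$, nothing guarantees $M>\az^+$, so this decay cannot absorb the weight when $\az^+$ is large. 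Your statement that the stated hypotheses ``are exactly what is needed to make the exponents positive'' is therefore asserted but not delivered by the computation you describe; you must either cite Lemma \ref{le7-21c} or reprove its first bound by a more careful splitting (keeping roughly $N$ powers of $t\sqrt H$ on the kernel for the spatial tail and landing only the remaining ones on $b_Q$, which is where the loss $m-N-n$ comes from). Your ``small cube'' computation ($t\ge\ell(Q)$, using $k=0$ and $p_{t,m+M}$) is fine and recovers the second bound of Lemma \ref{le7-21c}; just route the borderline case $v=v'$ (in particular $v=0$) through the other branch, since zero-level molecules do not satisfy the size condition for $k=0$.

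A secondary but real issue is your treatment of $\|e^{-\sqrt H}f\|_{L^{p(\cdot)}(\rn)}$. It does not reduce to ``a single $\eta_{0,L}$-convolution bounded via the Hardy--Littlewood maximal operator'': the level-$v$ molecules live at scale $2^{-v}$, and one needs, as in \eqref{5-25a}, a bound of the form $2^{v(n-\dz)}|Q|^{-1/2}(1+2^{v}|x-x_Q|)^{-n-N}$ with $\dz$ chosen so that $\dz+\az^->0$ and $M>n+N+\dz$ (this is precisely where the $-\az^-$ part of the hypothesis on $M$ enters), followed by Lemma \ref{le7-21b}, a pointwise H\"older step in $v$ with exponent $q(x)$, and then Lemma \ref{lem10-17a} with the scale-$v$ kernels $\eta_{v,\cdot}$. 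Replacing those convolutions by $\mathcal M$ and trying to conclude in $L^{p(\cdot)}(\ell^{q(\cdot)})$ would require the Fefferman--Stein vector-valued maximal inequality with variable $q(\cdot)$, which fails (Remark \ref{rem5-20}(ii)); this is exactly the pitfall the convolution lemma is designed to avoid. With these two points repaired, the rest of your outline matches the paper's proof.
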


To present the proof of Theorem \ref{thm10-9}, we need some lemmas.
The following one is just from \cite[Lemma 5.13]{CK21}.

\begin{lem}\label{le7-21b}
Let $0<\tau\leq1$, $j, v\in\nn_0$ and $R\in(n/\tau,\fz)$. Then it follows that,
for any sequence $\{s_Q\}_{Q\in\cd_v}\subset\cc$ and $x\in\rn$,
$$
\sum_{Q\in\cd_v}|s_Q|\lf(1+2^{\min(v,j)}|x-x_Q|\r)^{-R}
\ls\max\lf\{1,2^{(v-j)R}\r\}\lf[\eta_{v,R\tau}\ast
\lf(\sum_{Q\in\cd_v}|s_Q|^\tau\textbf{1}_Q\r)\r]^{1/\tau}(x),
$$
where the implied constant is independent of $v,\ j,\ x$ and $\{s_Q\}_{Q\in\cd_v}$.
\end{lem}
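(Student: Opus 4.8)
The plan is to reduce the asserted inequality to a single-scale convolution estimate and then to derive that estimate by comparing the corners $x_Q$ of the cubes in $\cd_v$ with interior points of $Q$. Throughout I set $\beta:=R\tau$ (so $\beta\in(n,\fz)$ by hypothesis), $t_Q:=|s_Q|^\tau\ge0$ and $m:=\min(v,j)\le v$. First I would exploit that $0<\tau\le1$, so that $\sum_k a_k\le(\sum_k a_k^\tau)^{1/\tau}$ for any nonnegative numbers $a_k$; applying this with $a_Q:=|s_Q|(1+2^{m}|x-x_Q|)^{-R}$ and raising the left-hand side of the lemma to the power $\tau$ bounds it by $\sum_{Q\in\cd_v}t_Q(1+2^{m}|x-x_Q|)^{-\beta}$. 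Since $(\max\{1,2^{(v-j)\beta}\})^{1/\tau}=\max\{1,2^{(v-j)R}\}$, taking $\tau$-th roots at the end reduces everything to proving
$$\sum_{Q\in\cd_v}t_Q\lf(1+2^{m}|x-x_Q|\r)^{-\beta}\ls\max\lf\{1,2^{(v-j)\beta}\r\}\;\eta_{v,\beta}\ast\lf(\sum_{Q\in\cd_v}t_Q\textbf{1}_Q\r)(x).$$

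Next I would dispose of the prefactor $\max\{1,2^{(v-j)\beta}\}$ by passing to the scale $2^{-v}$. Since $m\le v$ and $2^{v}|x-x_Q|=2^{v-m}\,2^{m}|x-x_Q|$ with $2^{v-m}\ge1$, one has $1+2^{v}|x-x_Q|\le2^{v-m}(1+2^{m}|x-x_Q|)$, hence $(1+2^{m}|x-x_Q|)^{-\beta}\le2^{(v-m)\beta}(1+2^{v}|x-x_Q|)^{-\beta}$; moreover $2^{(v-m)\beta}=2^{\max\{v-j,\,0\}\beta}=\max\{1,2^{(v-j)\beta}\}$, which matches the prefactor exactly. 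So it remains to prove the $m=v$ case, $\sum_{Q\in\cd_v}t_Q(1+2^{v}|x-x_Q|)^{-\beta}\ls\eta_{v,\beta}\ast(\sum_{Q\in\cd_v}t_Q\textbf{1}_Q)(x)$.

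For this last estimate I would use that each $Q\in\cd_v$ has side length $\ell(Q)=2^{-v}$ and lower-left corner $x_Q$, so $|y-x_Q|\le\sqrt n\,2^{-v}$ for $y\in Q$; the triangle inequality then gives $1+2^{v}|x-x_Q|\le(1+\sqrt n)(1+2^{v}|x-y|)$, whence $(1+2^{v}|x-x_Q|)^{-\beta}\ls(1+2^{v}|x-y|)^{-\beta}$ uniformly in $y\in Q$. Multiplying by $2^{nv}=|Q|^{-1}$ and integrating over $Q$ yields $(1+2^{v}|x-x_Q|)^{-\beta}\ls\int_Q2^{nv}(1+2^{v}|x-y|)^{-\beta}\,dy$. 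Summing over $Q\in\cd_v$, and using that the cubes in $\cd_v$ are pairwise disjoint and $t_Q\ge0$ (so Tonelli applies), the right-hand side becomes $\int_\rn2^{nv}(1+2^{v}|x-y|)^{-\beta}\sum_{Q\in\cd_v}t_Q\textbf{1}_Q(y)\,dy=\eta_{v,\beta}\ast(\sum_{Q\in\cd_v}t_Q\textbf{1}_Q)(x)$, which is exactly what is needed. Chaining the three reductions and taking the $(1/\tau)$-th power finishes the argument.

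I expect no genuine obstacle; every step is elementary. The only points requiring care are purely bookkeeping: matching the exponent $R$ on the left-hand side with the exponent $R\tau$ inside $\eta_{v,R\tau}$ on the right, and checking that raising $\max\{1,2^{(v-j)R\tau}\}$ to the power $1/\tau$ reproduces exactly $\max\{1,2^{(v-j)R}\}$. One should also note that the constant generated in the corner-versus-interior step depends on $n$ and $\beta$, hence on $R$ and $\tau$, but not on $v$, $j$, $x$ or the sequence $\{s_Q\}_{Q\in\cd_v}$, which is all that is claimed. Finally, the hypothesis $R>n/\tau$ is not actually invoked in deriving this pointwise inequality; it is the condition under which the resulting bound is useful in the sequel, e.g.\ in combination with Lemma \ref{lem10-17a}, which requires the decay exponent of $\eta$ to exceed $n$.
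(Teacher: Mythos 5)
Your proof is correct. Note that the paper does not prove this lemma at all --- it imports it verbatim as Lemma 5.13 of Caetano--Kempka \cite{CK21} --- so there is no in-paper argument to compare against; your three-step reduction (the embedding $\ell^\tau\hookrightarrow\ell^1$ for $\tau\le1$, rescaling from $2^{\min(v,j)}$ to $2^{v}$ at the exact cost of the factor $\max\{1,2^{(v-j)R}\}$ via $v-\min(v,j)=\max\{0,v-j\}$, and replacing the point value at $x_Q$ by the average over $Q$) is precisely the standard route to such discrete-convolution estimates, and each step checks out. One small slip: in the corner-versus-interior step you quote the triangle inequality as $1+2^{v}|x-x_Q|\le(1+\sqrt n)(1+2^{v}|x-y|)$ and then conclude $(1+2^{v}|x-x_Q|)^{-\beta}\ls(1+2^{v}|x-y|)^{-\beta}$; raising to a negative power reverses the inequality, so the bound you actually need is the symmetric one, $1+2^{v}|x-y|\le(1+\sqrt n)(1+2^{v}|x-x_Q|)$, which holds for the same reason ($|x-y|\le|x-x_Q|+\sqrt n\,2^{-v}$ for $y\in Q$). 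With that one-line correction the argument is complete, the constant depends only on $n$, $R$ and $\tau$ as claimed, and your closing observation that the hypothesis $R>n/\tau$ is not needed for the pointwise inequality itself (only for its subsequent use with Lemma \ref{lem10-17a}) is also accurate.
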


The following estimates were proved in \cite[Lemma 3.6]{BD15}, which play key roles in
the proof of Theorem \ref{thm10-9}.

\begin{lem}\label{le7-21c}
Under the assumptions as in Theorem \ref{thm10-9}, there exists a positive constant $C$ such that,
for any $Q\in\cd_v$ with $v\in\nn_0$ and $x\in\rn$, we have
$$|(t\sqrt{H})^me^{-t\sqrt{H}}m_Q(x)|\le C|Q|^{-\frac12}
\lf(\frac{t}{2^{-v}}\r)^{m-N-n}\lf(1+\frac{|x-x_Q|}{2^{-v}}\r)^{-n-N},\, \forall t\in(0,2^{-v}],$$
and
$$|(t\sqrt{H})^me^{-t\sqrt{H}}m_Q(x)|\le C|Q|^{-\frac12}
\lf(\frac{2^{-v}}{t}\r)^M\lf(1+\frac{|x-x_Q|}{t}\r)^{-n-N},\, \forall t\in(2^{-v},\fz).$$
\end{lem}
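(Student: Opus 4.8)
The plan is to exploit the defining structure of a molecule, $m_Q=(\sqrt H)^Mb_Q$ with $b_Q$ as in Definition \ref{def12-1}, together with the functional calculus for the nonnegative self-adjoint operator $\sqrt H$, which lets me redistribute the powers of $\sqrt H$ in $(t\sqrt H)^me^{-t\sqrt H}m_Q=t^m(\sqrt H)^{m+M}e^{-t\sqrt H}b_Q$ between an \emph{operator factor} $(t\sqrt H)^je^{-t\sqrt H}$, whose kernel is controlled by Lemma \ref{le7-21}, and a \emph{molecule factor} $(\sqrt H)^kb_Q$, whose size is controlled by Definition \ref{def12-1}(ii). Writing $\ell:=\ell(Q)=2^{-v}$, for every integer $k\in\{0,1,\dots,2M\}$ one has
\[
(t\sqrt H)^me^{-t\sqrt H}m_Q=t^{\,k-M}(t\sqrt H)^{m+M-k}e^{-t\sqrt H}\big[(\sqrt H)^kb_Q\big],
\]
so that, applying Lemma \ref{le7-21} to the kernel of $(t\sqrt H)^{m+M-k}e^{-t\sqrt H}$ and Definition \ref{def12-1}(ii) to $(\sqrt H)^kb_Q$,
\[
\big|(t\sqrt H)^me^{-t\sqrt H}m_Q(x)\big|\ls\Big(\frac t\ell\Big)^{k-M}|Q|^{-1/2}\int_\rn\frac{t^{\,m+M-k}}{(t+|x-y|)^{\,n+m+M-k}}\Big(1+\frac{|y-x_Q|}\ell\Big)^{-n-N}\,dy .
\]
Everything then reduces to a scale-separated bound for this convolution of two bump-type kernels at the scales $t$ and $\ell$, plus a suitable choice of $k$ in each of the regimes $t\le\ell$ and $t>\ell$.

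For the convolution, I would show that, for $a\ge b>0$ and all $t,\ell>0$,
\[
\int_\rn\frac{t^{a}}{(t+|x-y|)^{n+a}}\Big(1+\frac{|y-x_Q|}\ell\Big)^{-n-b}\,dy\ls\Big(1+\frac{|x-x_Q|}{\max\{t,\ell\}}\Big)^{-n-b},
\]
by splitting $\rn$ according to whether $|y-x_Q|\le\tfrac12|x-x_Q|$ (on which $|x-y|\gs|x-x_Q|$, so the operator kernel is estimated at $|x-x_Q|$ and the remaining molecule tail has total mass $\ls\ell^n$) or not (on which the molecule tail is comparable to its value at $x$ and the operator kernel has total mass $\ls1$), and then comparing $t$ with $\ell$; the hypothesis $a\ge b$ enters only to absorb, in the ``far'' term, a factor $(\min\{t,\ell\}/\max\{t,\ell\})^{a}(|x-x_Q|/\max\{t,\ell\})^{b-a}\le1$.

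For the choice of $k$: when $t\in(0,2^{-v}]$ one has $\max\{t,\ell\}=\ell$ and the desired gain is $(t/\ell)^{m-N-n}$, so I would take $k:=m+M-N-n$; this index lies in $\{0,\dots,2M\}$ — and even in $\{M,\dots,2M\}$, which suffices for a zero level molecule, since $m\ge N+n+1$ — by the constraints relating $m$, $M$ and $N$, and then $m+M-k=N+n\ge N$, so the convolution bound with $a=N+n$, $b=N$ gives the first assertion. When $t\in(2^{-v},\fz)$ one has $\max\{t,\ell\}=t$ and the desired gain is $(\ell/t)^M$, so I would take $k:=0$, so that the molecule factor is $b_Q$ itself, whose size bound contributes the factor $\ell^M$ that compensates $t^{k-M}=t^{-M}$; since $m+M\ge N$, the convolution bound with $a=m+M$, $b=N$ gives the second assertion. (For an $(H,M)$-zero level molecule $v=0$, so $\ell=1$ and only $t\in(0,1]$ — hence only the first assertion, with the admissible $k=m+M-N-n\ge M$ — is ever invoked.)

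The hard part is exactly the scale-separated convolution bound together with the bookkeeping behind the choice of $k$: with the single free index $k$ one must line up the power of $t/\ell$ coming from the prefactor, the power of $\ell$ lost in or gained from the size of $(\sqrt H)^kb_Q$, and the absence of any $t$- or $\ell$-gain from the convolution, so as to reproduce the exponents $m-N-n$ and $M$ while keeping the spatial decay of order $n+N$; checking that the admissible range $\{0,\dots,2M\}$ always contains a workable $k$ is where the conditions $m>\max\{\az^+,0\}+N+n$ and $M>n+N+\max\{\az^-,-\az^-\}$ are consumed.
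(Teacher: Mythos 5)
The paper does not actually prove this lemma; it is quoted from \cite[Lemma 3.6]{BD15}, so there is no in-text proof to compare against. Your argument is the natural one and, as far as I can tell, is essentially the route taken in \cite{BD15}: redistribute the powers of $\sqrt H$ between the kernel factor $(t\sqrt H)^{m+M-k}e^{-t\sqrt H}$ (controlled by Lemma \ref{le7-21}(ii)) and the molecule factor $(\sqrt H)^k b_Q$ (controlled by Definition \ref{def12-1}(ii)), and then invoke the standard convolution estimate for two bump kernels at scales $t$ and $\ell(Q)$ with decay exponents $a\ge b$. Your scale-separated convolution bound is correct (including the observation that all of the $t/\ell$ gain must come from the prefactor $(t/\ell)^{k-M}$, since the convolution itself yields no such gain near $x_Q$), the choice $k=0$ for $t>2^{-v}$ is exactly right, and your remark that for zero-level molecules only the regime $t\le 1$ with $k\ge M$ is ever needed is consistent with how the lemma is used in the proof of Theorem \ref{thm10-9}.

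There is, however, one concrete gap: for the case $t\le 2^{-v}$ you take $k=m+M-N-n$ and assert that $k\le 2M$ ``by the constraints relating $m$, $M$ and $N$.'' This requires $m\le M+N+n$, and the hypotheses of Theorem \ref{thm10-9} impose only \emph{lower} bounds on $m$ and on $M$ separately, so this inequality does not follow; e.g.\ $m$ may be taken arbitrarily large while $M$ is minimal. This is not merely a bookkeeping slip in your write-up: if $m>M+N+n$, the size condition (ii) is simply unavailable for $k=m+M-N-n>2M$, and in fact the first estimate of the lemma is then false in general --- a molecule saturating the bounds of Definition \ref{def12-1} has spectral content $\sim(\lz\ell)^{-M}$ at frequencies $\lz\sim 1/t$, so the decay of $(t\sqrt H)^m e^{-t\sqrt H}m_Q$ saturates at $(t/2^{-v})^{M}$ rather than $(t/2^{-v})^{m-N-n}$. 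So you should state explicitly the additional requirement $M\ge m-N-n$ (equivalently $m+M-N-n\in\{M+1,\dots,2M\}$, which also covers the zero-level case); this is harmless for the paper, where $M$ is always ``chosen large enough'' in the applications of the lemma, but it is not implied by the hypotheses as literally written.
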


\begin{proof}[Proof of Theorem \ref{thm10-9}]
We only need to prove
\begin{align}\label{eq7-19}
{\rm I}:=\lf\|\lf[\int_0^1\lf(t^{-\az(\cdot)}
|(t\sqrt{H})^me^{-t\sqrt{H}}f|\r)^{q(\cdot)}\frac{dt}t
\r]^{1/q(\cdot)}\r\|_{L^{p(\cdot)}(\rn)}
\ls\|\{s_Q\}_{Q\in \cd_v,v\in\nn_0}\|_{f_{p(\cdot),q(\cdot)}^{\alpha(\cdot)}(\rn)}
\end{align}
and
\begin{equation}\label{eq7-19c}
\|e^{-\sqrt{H}}f\|_{L^{p(\cdot)}(\rn)}\ls
\|\{s_Q\}_{Q\in \cd_v,v\in\nn_0}\|_{f_{p(\cdot),q(\cdot)}^{\alpha(\cdot)}(\rn)}.
\end{equation}

We first prove \eqref{eq7-19}. We write
$${\rm I}
=\lf\|\lf[\sum_{k\in\nn }\int_{2^{-k}}^{2^{-k+1}}\lf(t^{-\az(\cdot)}
|(t\sqrt{H})^me^{-t\sqrt{H}}f|\r)^{q(\cdot)}\frac{dt}t\r]^{1/q(\cdot)}
\r\|_{L^{p(\cdot)}(\rn)},$$
and then divide it into two parts
\begin{align}\label{eq7-19a}
{\rm I}
&\ls\lf\|\lf[\sum_{k\in\nn }\lf(2^{k\az(\cdot)}\sum_{v<k}\sum_{Q\in\cd_v}
|s_Q|\sup_{t\in[2^{-k},2^{-k+1})}|(t\sqrt{H})^me^{-t\sqrt{H}}m_Q|
\r)^{q(\cdot)}\r]^{1/q(\cdot)}\r\|_{L^{p(\cdot)}(\rn)}\\ \noz
&\quad+\lf\|\lf[\sum_{k\in\nn }\lf(2^{k\az(\cdot)}\sum_{v\ge k}\sum_{Q\in\cd_v}
|s_Q|\sup_{t\in[2^{-k},2^{-k+1})}|(t\sqrt{H})^me^{-t\sqrt{H}}m_Q|
\r)^{q(\cdot)}\r]^{1/q(\cdot)}\r\|_{L^{p(\cdot)}(\rn)}\\ \noz
&=:\rm I_1+I_2.\noz
\end{align}
For $\rm I_1$, since $v<k$ in this case and
$t<2^{-k+1}\le 2^{-v}$, it follows from Lemma \ref{le7-21c} that, for any $x\in Q$,
\begin{align}\label{eq9-18c}
&\sum_{Q\in\cd_v}|s_Q|\sup_{t\in[2^{-k},2^{-k+1})}|(t\sqrt{H})^me^{-t\sqrt{H}}m_Q(x)|\\
&\hs\hs\ls 2^{-(k-v)(m-N-n)}\sum_{Q\in\cd_v}|Q|^{-\frac12}|s_Q|
\lf(1+\frac{|x-x_Q|}{2^{-v}}\r)^{-n-N}.\noz
\end{align}
Moreover,
we apply Lemma \ref{le7-21b} with
$n/(n+N-C_{\log}(\alpha))<\tau<\min\{1,p^-,q^-\}$ and $R=n+N$ to obtain
\begin{equation}\label{eq9-18b}
\sum_{Q\in\cd_v}|Q|^{-\frac12}|s_Q|\lf(1+\frac{|x-x_Q|}{2^{-v}}\r)^{-n-N}
\ls\lf[\eta_{v,(n+N)\tau}\ast\lf(\sum_{Q\in\cd_v}|Q|^{-\frac12\tau}
|s_Q|^{\tau}\textbf{1}_Q(\cdot)\r)\r]^{1/{\tau}}(x).
\end{equation}

Now, inserting \eqref{eq9-18b} into \eqref{eq9-18c}, together with \cite[Lemma 6.1]{DHR09},
yields
\begin{align*}
&\sum_{Q\in\cd_v}|s_Q|\sup_{t\in[2^{-k},2^{-k+1})}|(t\sqrt{H})^me^{-t\sqrt{H}}m_Q(x)|\\
&\hs\hs\ls2^{-v\az(x)}2^{(k-v)(m-N-n)}\lf[\eta_{v,(n+N)\tau-\tau C_{\log}(\alpha)}\ast
\Bigg(2^{\tau v\az(\cdot)}\sum_{Q\in\cd_v}|Q|^{-\frac12\tau}
|s_Q|^{\tau}\textbf{1}_Q(\cdot)\Bigg)\r]^{1/{\tau}}(x).
\end{align*}
Then
\begin{align*}
{\rm I}_1&\ls\lf\|\lf[\sum_{k\in\nn }\lf\{\sum_{v<k}2^{-(k-v)(m-N-n-\az^+)\tau}\r.\r.\r.\\
&\qquad\qquad\times
\lf.\lf.\lf.\lf[\eta_{v,(n+N)\tau-\tau C_{\log}(\alpha)}\ast\Bigg(2^{\tau v\az(\cdot)}\sum_{Q\in\cd_v}
|Q|^{-\frac12\tau}
|s_Q|^{\tau}\textbf{1}_Q(\cdot)\Bigg)\r]\r\}^{{q(\cdot)}/\tau}\r]^{1/{q(\cdot)}}
\r\|_{L^{p(\cdot)}(\rn)}.
\end{align*}

Since $m>\max\{\az^+,0\}+n+N$, $\sum_{k>v}2^{-(k-v)(m-N-n-\az^+)q(\cdot)}$
is bounded by a constant independent of $k,v$. Thus, by Lemma \ref{lem10-17a}, we have
\begin{align}\label{eq5-16a}
{\rm I}_1
&\ls\lf\|\lf\{\sum_{v\in\nn }\lf[\eta_{v,(n+N)\tau-\tau C_{\log}(\alpha)}\ast
\Bigg(2^{\tau v\az(\cdot)}\sum_{Q\in\cd_v}
|Q|^{-\frac12\tau}|s_Q|^{\tau}\textbf{1}_Q(\cdot)\Bigg)\r]^{{q(\cdot)}/{\tau}}
\r\}^{1/{q(\cdot)}}\r\|_{L^{p(\cdot)}(\rn)}\\ \noz
&\ls\lf\|\lf\{\sum_{v\in\nn }\Bigg(2^{\tau v\az(\cdot)}\sum_{Q\in\cd_v}
|Q|^{-\frac12\tau}|s_Q|^{\tau}\textbf{1}_Q(\cdot)\Bigg)^{{q(\cdot)}/{\tau}}
\r\}^{1/q(\cdot)}\r\|_{L^{p(\cdot)}(\rn)}\\ \noz
&\ls\lf\|\{s_Q\}_{Q\in\cd_v,v\in\nn_0}\r\|_{f_{p(\cdot),q(\cdot)}^{\az(\cdot)}(\rn)}.\noz
\end{align}

It remains to deal with $\rm I_2$, via an argument similar to that used in $\rm I_1$. We only
 give some different point involved
Lemma \ref{le7-21c} for $v\ge k$, i.e.
\begin{equation*}
\sup_{t\in[2^{-k},2^{-k+1})}|(t\sqrt{H})^me^{-t\sqrt{H}}m_Q(x)|
\ls|Q|^{-\frac12}2^{-(v-k)M}\lf(1+\frac{|x-x_Q|}{2^{-k}}\r)^{-n-N}.
\end{equation*}
Noticing that $M>\az^-+n+N$, we conclude that
\begin{align*}
{\rm I_2}&\ls\lf\|\lf[\sum_{k\in\nn }\lf\{\sum_{v\ge k}2^{-(v-k)(M-n-N+\az^-)}\r.\r.\r.\\
&\qquad\qquad\times
\lf.\lf.\lf.\lf[\eta_{v,(n+N)\tau-\tau C_{\log}(\alpha)}\ast\lf(2^{\tau v\az(\cdot)}\sum_{Q\in\cd_v}
|Q|^{-\frac12\tau}|s_Q|^{\tau}
\textbf{1}_Q(\cdot)\r)\r]^{1/{\tau}}\r\}^{q(\cdot)}\r]^{1/q(\cdot)}\r\|_{L^{p(\cdot)}(\rn)}\\
&\ls\lf\|\lf[\sum_{v\in\nn }
\lf(2^{v\az(\cdot)}\sum_{Q\in\cd_v}|Q|^{-\frac12}
|s_Q|\textbf{1}_Q\r)^{q(\cdot)}\r]^{1/q(\cdot)}\r\|_{L^{p(\cdot)}(\rn)}\\
&\ls\lf\|\{s_Q\}_{Q\in\cd_v,v\in\nn_0}\r\|_{f_{p(\cdot),q(\cdot)}^{\az(\cdot)}(\rn)}.
\end{align*}
This, combined with \eqref{eq7-19a} and \eqref{eq5-16a}, completes the proof of \eqref{eq7-19}.

We now give the proof of \eqref{eq7-19c}.
By Lemma \ref{le7-21}, we know that,
for any $Q\in\cd_v,v\in\nn$ and $x\in\rn$,
\begin{align}\label{5-25a}
|e^{-\sqrt{H}}m_Q(x)|&=|(\sqrt{H})^Me^{-\sqrt{H}}b_Q(x)|
\le\int_\rn|p_{1,M}(y, x)||b_Q(y)|dy\\
&\ls\int_\rn\frac 1{(1+|x-y|)^{n+M}}\ell(Q)^M|Q|^{-\frac12}\lf(1+\frac{|y-x_Q|}{\ell(Q)}
\r)^{-n-N}dy\noz\\
&\ls \int_\rn \frac{(1+\frac{|x-y|}{\ell(Q)})^{n+N}}{(\frac1{\ell(Q)}
+\frac{|x-y|}{\ell(Q)})^{n+M-\delta}}\,dy\times
\frac{\ell(Q)^{\delta-n}|Q|^{-1/2}}{(1+\frac{|x-x_Q|}{\ell(Q)})^{n+N}}\noz\\
&\ls \frac{2^{v(n-\delta)}|Q|^{-1/2}}{(1+2^v|x-x_Q|)^{n+N}},\noz
\end{align}
where $\delta$ is chosen such that $\delta+\alpha^->0$ and
$M>n+N+\delta$, and, for any $Q\in\cd_0$,
\begin{align*}
|e^{-\sqrt{H}}m_Q(x)|
\le\int_\rn|p_1(y, x)||m_Q(y)|dy
\ls(1+|x-x_Q|)^{-n-N}.
\end{align*}
Thus, by this, \eqref{5-25a}, Lemma \ref{le7-21b} and \cite[Lemma 6.1]{DHR09},
we obtain
\begin{align*}
|e^{-\sqrt{H}}f(x)|
\ls\sum_{v\in\nn_0}2^{-v[\delta+\alpha(x)]}
\lf\{\eta_{v,(n+N)r-C_{\log}(\az)r}\ast\lf(\Bigg[\sum_{Q\in\cd_v}2^{v\az(\cdot)}|Q|^{-\frac12}
|s_Q|\textbf{1}_Q\Bigg]^r\r)(x)\r\}^{\frac1r},
\end{align*}
where $r\in(0,\min\{1,p^-,q^-\})$.
For any $x\in\rn$, applying the H\"older inequality if $q(x)\in(1,\fz)$
and the fact that $(\sum_v|a_v|)^{q(x)}\le \sum_v|a_v|^{q(x)}$ if $q(x)\in(0,1]$,
we find that
\begin{align*}
|e^{-\sqrt{H}}f(x)|
\ls \lf[\sum_{v\in\nn_0}
\lf\{\eta_{v,(n+N)r-C_{\log}(\az)r}\ast\lf(\Bigg[\sum_{Q\in\cd_v}2^{v\az(\cdot)}|Q|^{-\frac12}
|s_Q|\textbf{1}_Q\Bigg]^r\r)(x)\r\}^{\frac{q(x)}r}\r]^{\frac1{q(x)}}.
\end{align*}
Therefore, involving Lemma \ref{lem10-17a}, we conclude that \eqref{eq7-19c} is true.
This completes the proof of Theorem \ref{thm10-9}.
\end{proof}

As a consequence of Theorems \ref{thm10-18a} and \ref{thm10-9}, we show that the space
$F_{p(\cdot),q(\cdot)}^{\az(\cdot),H,m}(\rn)$ is independent of the choice
of $m$ when $m$ is large enough. Precisely, we have the following result.
\begin{thm}\label{thm10-18}
Let $\az(\cdot)$, $p(\cdot)$, $q(\cdot)$ and $H$ be as in Definition {\rm{\ref{de9-16a}}}.
If $m_i\in\nn $ is large as in \eqref{7-5a},
then the space $F_{p(\cdot),q(\cdot)}^{\az(\cdot),H,m_1}(\rn)$ and
$F_{p(\cdot),q(\cdot)}^{\az(\cdot),H,m_2}(\rn)$ coincide with equivalent (quasi-)norms.
\end{thm}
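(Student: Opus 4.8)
The plan is to deduce Theorem \ref{thm10-18} from the two molecular characterizations, Theorems \ref{thm10-18a} and \ref{thm10-9}, by composing them. Fix two admissible values $m_1, m_2 \in \nn$ satisfying \eqref{7-5a}, and let $f \in F_{p(\cdot),q(\cdot)}^{\az(\cdot),H,m_1}(\rn)$. First I would apply Theorem \ref{thm10-18a} with the smoothness parameter $m_1$: this produces a decomposition $f = \sum_{v\in\nn_0}\sum_{Q\in\cd_v} s_Q m_Q$ converging in $\cs'(\rn)$, where $\{m_Q\}_{Q\in\cd_v,v\in\nn}$ are $(H,M,N)$-molecules, $\{m_Q\}_{Q\in\cd_0}$ are $(H,M)$-zero level molecules, and the coefficient sequence satisfies
\begin{equation*}
\lf\|\{s_Q\}_{Q\in\cd_v,v\in\nn_0}\r\|_{f_{p(\cdot),q(\cdot)}^{\az(\cdot)}(\rn)}
\ls \|f\|_{F^{\az(\cdot),H,m_1}_{p(\cdot),q(\cdot)}(\rn)}.
\end{equation*}
The one subtlety here is that Theorem \ref{thm10-18a} holds for \emph{arbitrary} $M,N\in\nn$; so I am free to choose $M$ and $N$ large enough that the hypotheses of Theorem \ref{thm10-9} are met relative to the \emph{other} parameter $m_2$, namely $N \in \nn\cap(\frac n{\min\{1,p^-,q^-\}}-n+C_{\log}(\alpha),\fz)$ with $m_2 > \max\{\az^+,0\}+N+n$, and $M > n+N+\max\{\az^-,-\az^-\}$. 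This is possible precisely because $m_2$ satisfies \eqref{7-5a}: that condition guarantees the existence of an admissible $N$ with $m_2 > \max\{\az^+,0\}+N+n$, and then $M$ can be taken as large as needed afterward. I would make this compatibility of the ranges explicit, since it is the only place where the hypothesis \eqref{7-5a} is genuinely used.

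Next, with this same decomposition of $f$ into $(H,M,N)$-molecules and the same coefficient sequence, I would apply Theorem \ref{thm10-9} with smoothness parameter $m_2$. Its conclusion is that $f \in F_{p(\cdot),q(\cdot)}^{\az(\cdot),H,m_2}(\rn)$ with
\begin{equation*}
\|f\|_{F_{p(\cdot),q(\cdot)}^{\az(\cdot),H,m_2}(\rn)}
\ls \lf\|\{s_Q\}_{Q\in\cd_v,v\in\nn_0}\r\|_{f_{p(\cdot),q(\cdot)}^{\az(\cdot)}(\rn)}
\ls \|f\|_{F^{\az(\cdot),H,m_1}_{p(\cdot),q(\cdot)}(\rn)}.
\end{equation*}
This gives the continuous inclusion $F_{p(\cdot),q(\cdot)}^{\az(\cdot),H,m_1}(\rn) \hookrightarrow F_{p(\cdot),q(\cdot)}^{\az(\cdot),H,m_2}(\rn)$ with control of norms. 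Here I should note that $f$ recovered from $\sum s_Q m_Q$ is genuinely the original distribution: the decomposition \eqref{04-12} converges to $f$ in $\cs'(\rn)$ by construction in Theorem \ref{thm10-18a}, and Theorem \ref{thm10-9} takes exactly such a $\cs'$-convergent molecular sum as input, so no ambiguity arises. Then, by symmetry, interchanging the roles of $m_1$ and $m_2$ (again choosing the auxiliary parameters $M,N$ appropriately, now relative to $m_1$), I obtain the reverse inclusion, and the two norms are equivalent.

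I expect this proof to be short, and the only real point requiring care — rather than a true obstacle — is the bookkeeping on the parameters: one must check that for any two $m_1,m_2$ in the range \eqref{7-5a} there is a single choice of $M,N$ that simultaneously makes $\{m_Q\}$ admissible as output of Theorem \ref{thm10-18a} (which imposes nothing on $M,N$) and admissible as input to Theorem \ref{thm10-9} for the target index. A clean way to present this is: given $m_2$ as in \eqref{7-5a}, pick $N \in \nn$ with $\frac n{\min\{1,p^-,q^-\}}-n+C_{\log}(\alpha) < N < m_2 - \max\{\az^+,0\} - n$ (nonempty by \eqref{7-5a}), then pick any $M \in \nn$ with $M > n+N+\max\{\az^-,-\az^-\}$; run Theorem \ref{thm10-18a} with this $(M,N)$ to decompose $f \in F^{\az(\cdot),H,m_1}$, and feed the result into Theorem \ref{thm10-9} with index $m_2$. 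Swapping $m_1 \leftrightarrow m_2$ finishes the argument. I would conclude by stating that the implicit constants depend only on $\az(\cdot), p(\cdot), q(\cdot), n, m_1, m_2$, and hence $F_{p(\cdot),q(\cdot)}^{\az(\cdot),H,m_1}(\rn) = F_{p(\cdot),q(\cdot)}^{\az(\cdot),H,m_2}(\rn)$ with equivalent (quasi-)norms, which is exactly the assertion of Theorem \ref{thm10-18} and justifies dropping $m$ from the notation.
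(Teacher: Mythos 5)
Your proposal is correct and follows essentially the same route as the paper: decompose $f\in F_{p(\cdot),q(\cdot)}^{\az(\cdot),H,m_1}(\rn)$ via Theorem \ref{thm10-18a}, feed the resulting molecular decomposition and coefficient estimate into Theorem \ref{thm10-9} with index $m_2$, and then swap $m_1$ and $m_2$. Your explicit bookkeeping of the admissible $M,N$ is in fact slightly more careful than the paper, which simply takes ``$M\in\nn$ large enough.''
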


\begin{proof}
Let $f\in F_{p(\cdot),q(\cdot)}^{\az(\cdot),H,m_1}(\rn)$. Then, by Theorem \ref{thm10-18a},
we fine that there exist a sequence of $(H,M,N)$-molecules
$\{m_Q\}_{Q\in\cd_v,v\in\nn }$, a sequence of $(H,M)$-zero level molecules
$\{m_Q\}_{Q\in\cd_0}$, and a sequence of coefficients $\{s_Q\}_{Q\in\cd_v,v\in\nn_0}$,
where $M\in\nn$ large enough, such that
$$f=\sum_{v\in\nn_0}\sum_{Q\in\cd_v}s_Qm_Q\ \ {\rm in}\ \ \cs'(\rn),$$
and
\begin{equation*}
\lf\|\{s_Q\}_{Q\in\cd_v,v\in\nn_0}\r\|_{f_{p(\cdot),q}^{\az(\cdot)}(\rn)}\ls
\|f\|_{F_{p(\cdot),q(\cdot)}^{\az(\cdot),H,m_1}(\rn)}.
\end{equation*}
Moreover, by Theorem \ref{thm10-9}, we know that
$f\in F_{p(\cdot),q(\cdot)}^{\az(\cdot),H,m_2}(\rn)$ and
\begin{equation*}
\|f\|_{F^{\az(\cdot),H,m_2}_{p(\cdot),q(\cdot)}(\rn)}\ls
\lf\|\{s_Q\}_{Q\in\cd_v,v\in\nn_0}\r\|_{f_{p(\cdot),q(\cdot)}^{\az(\cdot)}(\rn)}
\ls\|f\|_{F^{\az(\cdot),H,m_1}_{p(\cdot),q(\cdot)}(\rn)}.
\end{equation*}
This implies
$F_{p(\cdot),q(\cdot)}^{\az(\cdot),H,m_1}(\rn)\hookrightarrow
F_{p(\cdot),q(\cdot)}^{\az(\cdot),H,m_2}(\rn)$.
Similarly, we have
$F_{p(\cdot),q(\cdot)}^{\az(\cdot),H,m_2}(\rn)\hookrightarrow
F_{p(\cdot),q(\cdot)}^{\az(\cdot),H,m_1}(\rn)$,
which completes the proof of Theorem \ref{thm10-18}.
\end{proof}

We end this section by the following proposition.

\begin{prop}\label{prop6-21}
Let $H$, $\az(\cdot),p(\cdot),q(\cdot)$ be as in Definition {\rm{\ref{de9-16a}}} with
$p^-,\ q^-\in(1,\fz)$. Then
$$\cs(\rn)\hookrightarrow F_{p(\cdot),q(\cdot)}^{\az(\cdot),H}(\rn)
\hookrightarrow \cs'(\rn).$$

\begin{proof}
Let $f\in \cs(\rn)$. Then it follows from Proposition \ref{prop10-16} that,
for any $k\in\nn_0$ and $t\in (0,\fz)$,
$(t\sqrt H)^ke^{-t\sqrt H}f\in \cs(\rn)$ and, for any $g\in\nn_0^n$ and $x\in\rn$,
\begin{equation}\label{6-7a}
|D^\gamma (t\sqrt H)^ke^{-t\sqrt H}f(x)|\ls \min\{1/t,t^{k-1/2}+t^{k+1}\}(1+|x|)^{-L}
\end{equation}
with any $L\in(0,\fz)$. Thus, it is easy to see that
$\|e^{-\sqrt H} f\|_{L^{p(\cdot)}(\rn)}<\fz$. On the other hand,
using \eqref{6-7a}, we find that, for any $x\in \rn$,
\begin{align*}
&\lf\{\int_0^1\lf[t^{-\alpha(\cdot)}|(t\sqrt H)^ke^{-t\sqrt H}f(x)|\r]^{q(\cdot)}\,
\frac{dt}t\r\}^{\frac1{q(\cdot)}}\\
&\hs \ls \lf\{\int_0^1\lf[t^{-\alpha(\cdot)}(t^{m-\frac12}+t^{m+1})\r]^{q(\cdot)}\,
\frac{dt}t\r\}^{\frac1{q(\cdot)}}(1+|x|)^{-L}
\ls (1+|x|)^{-L}.
\end{align*}
By this and choosing $L$ large enough, we conclude that
$$\lf\|\lf\{\int_0^1\lf[t^{-\az(\cdot)}|(t\sqrt{H})^m
e^{-t\sqrt{H}}f|\r]^{q(\cdot)}\frac{dt}t\r\}^{1/q(\cdot)}\r\|_{L^{p(\cdot)}(\rn)}<\fz,$$
which implies that $f\in F_{p(\cdot),q(\cdot)}^{\az(\cdot),H}(\rn)$.

To prove the second ``$\hookrightarrow$", we only need to show that, for any $\phi\in \cs(\rn)$,
$|\langle f,\phi\rangle|<\fz$. By the Calder\'on reproducing formula in Proposition
\ref{prop10-9}, we have
\begin{align*}
|\langle f,\phi\rangle|
&\ls\lf|\lf\langle\int_0^1(t\sqrt{H})^{l}e^{-2t\sqrt{H}}f\frac{dt}t,\phi\r\rangle\r|
+\lf|\lf\langle\sum_{k=0}^{l-1}\frac{2^k}{k!}
(\sqrt{H})^ke^{-2\sqrt{H}}f,\phi\r\rangle\r|\\
&=:{\rm I}_1+{\rm I}_2,
\end{align*}
where $l:=l_1+l_2$ is large enough.
For ${\rm I}_1$, by the H\"older inequality with
$$\frac1{p(\cdot)}+\frac1{p'(\cdot)}=1,\quad \frac1{q(\cdot)}+\frac1{q'(\cdot)}=1,$$
and Proposition \ref{prop10-16}, we know that
\begin{align*}
{\rm I}_1
&\ls \int_{\rn}\int_0^1|(t\sqrt H)^{l_1}e^{-t\sqrt H}f(x)|
|(t\sqrt H)^{l_2}e^{-t\sqrt H}\phi(x)|\,\frac{dt}t\,dx\\
&\ls \int_{\rn}\lf\{\int_0^1|t^{-\alpha(x)}
(t\sqrt H)^{l_1}e^{-t\sqrt H}f(x)|^{q(x)}
\,\frac{dt}t\r\}^{\frac1{q(x)}} \lf\{\int_0^1|t^{\alpha(x)}(t\sqrt H)^{l_2}e^{-t\sqrt H}\phi(x)|^{q'(x)}
\,\frac{dt}t\r\}^{\frac1{q'(x)}}\,dx\\
&\ls \lf\|\lf\{\int_0^1|t^{-\alpha(\cdot)}(t\sqrt H)^{l_1}e^{-t\sqrt H}f|^{q(\cdot)}
\,\frac{dt}t\r\}^{\frac1{q(\cdot)}}\r\|_{L^{p(\cdot)}(\rn)}
\lf\|(1+|\cdot|)^{-R}\r\|_{L^{p'(\cdot)}(\rn)}\\
&\ls \|f\|_{F_{p(\cdot),q(\cdot)}^{\az(\cdot),H}(\rn)},
\end{align*}
where $R$ is chosen large enough.
Similarly, we also have ${\rm I}_2\ls \|f\|_{F_{p(\cdot),q(\cdot)}^{\az(\cdot),H}(\rn)}$,
which completes the proof of Proposition \ref{prop6-21}.
\end{proof}
\end{prop}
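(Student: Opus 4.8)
The plan is to establish the two continuous embeddings $\cs(\rn)\hookrightarrow F_{p(\cdot),q(\cdot)}^{\az(\cdot),H}(\rn)$ and $F_{p(\cdot),q(\cdot)}^{\az(\cdot),H}(\rn)\hookrightarrow\cs'(\rn)$ separately, using the regularity estimates for $(t\sqrt H)^ke^{-t\sqrt H}$ in Proposition \ref{prop10-16} together with the Calder\'on reproducing formula in Proposition \ref{prop10-9}. For the first embedding, I would take $f\in\cs(\rn)$ and apply Proposition \ref{prop10-16}(ii), which says $(t\sqrt H)^ke^{-t\sqrt H}$ maps $\cs(\rn)$ into itself with quantitative bounds; refining the decay part, one gets for every $L\in(0,\fz)$ an estimate of the form $|D^\gamma(t\sqrt H)^ke^{-t\sqrt H}f(x)|\ls\min\{1/t,\,t^{k-1/2}+t^{k+1}\}(1+|x|)^{-L}$. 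From this the term $\|e^{-\sqrt H}f\|_{L^{p(\cdot)}(\rn)}$ is finite because $(1+|\cdot|)^{-L}\in L^{p(\cdot)}(\rn)$ for $L$ large (using $p^-\in(1,\fz)$, or even just $p^->0$ with $L$ large). For the integral term, I would fix $x$ and bound the inner $L^{q(\cdot)}((0,1),dt/t)$ norm: since near $t=0$ the integrand behaves like $t^{(m-\az(x)-1/2)q(x)}$ and $m$ is chosen as in \eqref{7-5a} so that $m-\az^+-1/2>0$, the $dt/t$-integral over $(0,1)$ converges, leaving a factor $(1+|x|)^{-L}$; then again $(1+|\cdot|)^{-L}\in L^{p(\cdot)}(\rn)$ for $L$ large finishes it.

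For the second embedding, I would show that for every $\phi\in\cs(\rn)$ the pairing $|\langle f,\phi\rangle|$ is controlled by $\|f\|_{F_{p(\cdot),q(\cdot)}^{\az(\cdot),H}(\rn)}$. Apply the Calder\'on reproducing formula of Proposition \ref{prop10-9} with $l=l_1+l_2$ large (splitting $(t\sqrt H)^l e^{-2t\sqrt H}=(t\sqrt H)^{l_1}e^{-t\sqrt H}\circ(t\sqrt H)^{l_2}e^{-t\sqrt H}$ via the semigroup property), to write $\langle f,\phi\rangle$ as an integral term ${\rm I}_1$ plus a finite sum ${\rm I}_2$. For ${\rm I}_1$, move the pairing onto the heat-semigroup side so that it becomes $\int_\rn\int_0^1 |(t\sqrt H)^{l_1}e^{-t\sqrt H}f(x)|\,|(t\sqrt H)^{l_2}e^{-t\sqrt H}\phi(x)|\,\frac{dt}{t}\,dx$; insert the factors $t^{-\az(x)}$ and $t^{\az(x)}$, apply H\"older in $t$ with exponents $q(x),q'(x)$ pointwise in $x$, and then H\"older in $x$ with exponents $p(\cdot),p'(\cdot)$. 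The first factor is exactly (up to choosing $l_1=m$) the defining seminorm of $f$; the second factor is $\|(1+|\cdot|)^{-R}\|_{L^{p'(\cdot)}(\rn)}$ after using Proposition \ref{prop10-16}(ii) applied to $\phi$ (with the $t^{\az(x)}$-weighted $L^{q'(\cdot)}(dt/t)$-norm finite because $l_2$ is large), which is finite for $R$ large since $p'^-\in(1,\fz)$ when $p^-\in(1,\fz)$ is bounded. The term ${\rm I}_2$ is handled analogously (it is a finite sum of terms of the form $\int_\rn |(\sqrt H)^k e^{-\sqrt H}f(x)|\,|\phi(x)|\,dx$ bounded via H\"older and the reproducing formula relating $(\sqrt H)^k e^{-\sqrt H}f$ back to $e^{-\sqrt H}f$ and the integral part), or more directly by relating $(\sqrt H)^k e^{-2\sqrt H}f=(\sqrt H)^k e^{-\sqrt H}(e^{-\sqrt H}f)$ and using the $L^{p(\cdot)}$ control of $e^{-\sqrt H}f$.

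The routine points are the convergence of the $dt/t$-integrals, for which the choice of $m$ in \eqref{7-5a} and of $l_1,l_2,L,R$ large is exactly tailored, and the membership $(1+|\cdot|)^{-L}\in L^{p(\cdot)}(\rn)$ (resp. $L^{p'(\cdot)}(\rn)$), which is standard for $p(\cdot)\in C^{\log}(\rn)$ with $p^-\in(1,\fz)$. The main obstacle, such as it is, is bookkeeping the pointwise-in-$x$ H\"older inequality with variable exponents $q(x),q'(x)$ and then the variable-exponent H\"older inequality in $x$ — one must be careful that the weighted inner integral $\{\int_0^1 |t^{\az(\cdot)}(t\sqrt H)^{l_2}e^{-t\sqrt H}\phi|^{q'(\cdot)}\frac{dt}t\}^{1/q'(\cdot)}$ produces a genuine rapidly-decaying function of $x$ uniformly, which follows from the $x$-uniform bound in Proposition \ref{prop10-16}(ii) combined with the Schwartz decay of $\phi$; once that is in hand, both embeddings are immediate. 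No delicate estimate of the Hermite kernels beyond Lemma \ref{le7-21} and Proposition \ref{prop10-16} is needed.
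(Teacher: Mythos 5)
Your proposal is correct and follows essentially the same route as the paper: the first embedding via the Schwartz-preservation estimate of Proposition \ref{prop10-16}(ii) and the rapid decay $(1+|x|)^{-L}$, and the second via the Calder\'on reproducing formula of Proposition \ref{prop10-9} with the split $l=l_1+l_2$, pointwise H\"older in $t$ with exponents $q(x),q'(x)$, and then variable-exponent H\"older in $x$ against $\|(1+|\cdot|)^{-R}\|_{L^{p'(\cdot)}(\rn)}$. The only refinement you add (choosing $l_1=m$ so the first factor is literally the defining seminorm) is consistent with what the paper does implicitly.
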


\section{Boundedness of Singular Integrals on
$F_{p(\cdot),q(\cdot)}^{\az(\cdot),H}(\rn)$\label{se-4}}

As applications, we develop the boundedness of singular integrals
associated to $H$ and the spectral multipliers of Laplace type
for the Hermite operator in this section.

\subsection{Regularity of Fractional Hermite Equation}

In this part, we consider the regularity of solutions of two fractional Hermite
equations:
$$H^{\sigma}u=f\ \ {\rm and}\ \ (I+H)^{\sigma}u=f,\ \ {\rm on}\ \ \rn,$$
for any $\sigma\in(0,\fz)$ and $f\in F_{p(\cdot),q(\cdot)}^{\az(\cdot),H}(\rn)$. To solve the
indicated equations, it is enough to investigate operators
$H^{-\sigma}$ and $(I+H)^{-\sigma}$,
named by the Riesz potential and the Bessel potential of the Hermite
operator, respectively.

The following lemma comes from \cite[Proposition 2.5]{BD15}.
\begin{lem}\label{lem10-19a}
Assume that $\phi\in\cs(\rn)$. Then, for all $\sigma\in(0,\fz)$,
we have $H^{-\sigma}\phi\in\cs(\rn)$.
\end{lem}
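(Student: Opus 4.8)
The plan is to prove Lemma~\ref{lem10-19a} by writing $H^{-\sigma}$ as an integral of the heat semigroup and then exploiting the smoothing and decay properties of $e^{-tH}$ already recorded in Proposition~\ref{prop10-16}. Recall the subordination-type identity
$$
H^{-\sigma}=\frac{1}{\Gamma(\sigma)}\int_0^\fz t^{\sigma-1}e^{-tH}\,dt,
$$
valid because $H$ is a nonnegative self-adjoint operator with spectrum bounded below by $n>0$ (the smallest eigenvalue of the harmonic oscillator). The strict positivity of the spectral gap is what makes this integral converge at $t=\fz$ without any extra hypothesis, and it is the feature that distinguishes the Hermite case from the Laplacian. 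First I would split the integral at $t=1$: on $(0,1]$ the factor $t^{\sigma-1}$ is integrable near $0$ since $\sigma>0$, and on $[1,\fz)$ the semigroup contributes exponential decay $\|e^{-tH}\|_{L^2\to L^2}\le e^{-nt}$, which beats any polynomial growth $t^{\sigma-1}$.

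Next, for $\phi\in\cs(\rn)$, I would show $H^{-\sigma}\phi\in\cs(\rn)$ by controlling the Schwartz seminorms $\sup_{x}|x^\beta D^\gamma (H^{-\sigma}\phi)(x)|$. The key input is Proposition~\ref{prop10-16}(ii): although it is stated for $(t\sqrt H)^k e^{-t\sqrt H}$, the analogous bound for $e^{-tH}$ preserving $\cs(\rn)$ with seminorm control like $\min\{1/t,\,1+t\}$ (or a similar polynomial-in-$t$ estimate) holds by the same functional-calculus argument, and for $t\ge 1$ one can moreover insert an arbitrary power of $H$ to gain exponential decay: $\|x^\beta D^\gamma e^{-tH}\phi\|_\infty\ls e^{-nt/2}\,p_{\beta,\gamma}(\phi)$ for a suitable Schwartz seminorm. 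Then I would differentiate under the integral sign (justified by dominated convergence using the uniform bounds just described) to get
$$
x^\beta D^\gamma(H^{-\sigma}\phi)(x)=\frac{1}{\Gamma(\sigma)}\int_0^\fz t^{\sigma-1}\,x^\beta D^\gamma\big(e^{-tH}\phi\big)(x)\,dt,
$$
and bound the right-hand side by $\int_0^1 t^{\sigma-1}\,C_{\beta,\gamma}(\phi)\,dt+\int_1^\fz t^{\sigma-1}e^{-nt/2}\,C_{\beta,\gamma}(\phi)\,dt<\fz$, uniformly in $x$. This gives finiteness of every Schwartz seminorm of $H^{-\sigma}\phi$, hence $H^{-\sigma}\phi\in\cs(\rn)$.

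The main obstacle I anticipate is the small-time behavior: near $t=0$ the seminorms of $e^{-tH}\phi$ do not decay, and in fact estimates such as the $1/t$ appearing in Proposition~\ref{prop10-16}(ii) blow up, so one must be careful that the exponent $\sigma-1$ from the Gamma-integral still wins. Since we only need $\sigma>0$, the bare bound $\|x^\beta D^\gamma e^{-tH}\phi\|_\infty\le C_{\beta,\gamma}(\phi)$ uniform for $t\in(0,1]$ (which follows because $e^{-tH}$ is uniformly bounded on the relevant weighted spaces as $t\downarrow0$, or simply because $e^{-tH}\phi\to\phi$ in $\cs(\rn)$) together with $\int_0^1 t^{\sigma-1}\,dt<\fz$ suffices; the large-time part is then the easy part thanks to the spectral gap. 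A secondary technical point is justifying the commutation of $H^{-\sigma}$ with differentiation and with multiplication by polynomials, which is handled by the dominated-convergence argument above using these uniform-in-$t$ Schwartz-seminorm bounds. Finally, one should note $H^{-\sigma}\phi$ is well defined as a Schwartz function and that this definition is consistent with the spectral-theoretic one on $L^2(\rn)$, which is immediate from the convergence of the integral in $L^2$.
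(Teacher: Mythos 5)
Your proposal is essentially correct, and it follows the same route as the source the paper relies on: the paper gives no proof of Lemma \ref{lem10-19a} at all, simply citing \cite[Proposition 2.5]{BD15}, and then records precisely your subordination formula in \eqref{5-26a}; the argument in that reference is along the same lines as yours (heat-semigroup representation plus Schwartz-seminorm bounds for $e^{-tH}$). The one step you should justify more carefully is the uniform bound $\sup_{0<t\le1}\|x^\beta D^\gamma e^{-tH}\phi\|_\infty\ls C_{\beta,\gamma}(\phi)$: it cannot be read off from Proposition \ref{prop10-16}(ii), which concerns $e^{-t\sqrt H}$ (not $e^{-tH}$) and whose bound degenerates as $t\to0$, and the paper only records $t\to0$ convergence in $\cs'(\rn)$, not in $\cs(\rn)$. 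A clean fix is to use the Hermite expansion (rapid decay of $\langle\phi,\mathbf{h}_\az\rangle$ together with polynomial-in-$|\az|$ growth of the seminorms of $\mathbf{h}_\az$), or equivalently the characterization of $\cs(\rn)$ via $\sup_x|x^\beta D^\gamma\psi|\ls\|H^M\psi\|_{L^2(\rn)}$ for $M$ large, combined with $\|H^Me^{-tH}\phi\|_{L^2(\rn)}=\|e^{-tH}H^M\phi\|_{L^2(\rn)}\le e^{-nt}\|H^M\phi\|_{L^2(\rn)}$; the latter also yields your large-time exponential decay at once (alternatively, the Mehler kernel \eqref{5-27a} gives $W_t(x,y)\ls e^{-nt}e^{-(|x|^2+|y|^2)/4}$ for $t\ge1$). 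With that point filled in, the dominated-convergence justification of differentiating under the integral and the split at $t=1$ work exactly as you describe.
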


Moreover, by adapting the argument used in \cite[Proposition 2.5]{BD15},
we find that, for any $\sigma\in(0,\fz)$ and $\phi\in\cs(\rn)$,
\begin{equation}\label{5-26a}
H^{-\sigma}\phi=\frac1{\Gamma(\sigma)}\int_0^{\fz}t^{\sigma}e^{-tH}\phi\frac{dt}t.
\end{equation}
For any $\sigma\in(0,\fz)$, thanks to Lemma \ref{lem10-19a},we define the Hermite-Riesz potential
$H^{-\sigma}:\cs'(\rn)\rightarrow\cs'(\rn)$ by duality
$$\langle H^{-\sigma}f,\phi\rangle:=\langle f,H^{-\sigma}\phi\rangle$$
for all $f\in\cs'(\rn)$ and $\phi\in\cs(\rn)$.

Recall that $W_t(\cdot,\cdot)$ is given by \eqref{5-27a} for all
$t\in(0,\fz)$. Denote by $W_{t,k}(\cdot,\cdot)$
the kernel of $(tH)^ke^{-tH}$ for all $k\in\nn_0$.

\begin{rem}
For any $t\in(0,\fz)$ and $x,y\in\rn$,
following \cite[Lemma 2.4]{BD15}, we see that
\begin{equation}\label{eq10-20}
W_t(x,y)\le\frac{C}{t^{n/2}}\exp\lf(-c\frac{|x-y|^2}t\r),
\end{equation}
moreover, by \cite[Lemma 2.5]{CD00}, we also have,
\begin{equation}\label{eq10-20a}
W_{t,k}(x,y)\le\frac{c}{t^{n/2}}\exp\lf(-c\frac{|x-y|^2}t\r),
\end{equation}
where $C,\ c$ are constants independent of $t,\ x$ and $y$.
\end{rem}

\begin{thm}\label{thm10-25}
Let $\az(\cdot),p(\cdot),q(\cdot)$ and $H$ be as in Definition {\rm{\ref{de9-16a}}}.
For any $\sigma\in(n/2,\ \fz)$, the operator $H^{-\sigma}$ is bounded from
$F_{p(\cdot),q(\cdot)}^{\az(\cdot),H}(\rn)$ to
$F_{p(\cdot),q(\cdot)}^{\az(\cdot)+2\sigma,H}(\rn)$.
\end{thm}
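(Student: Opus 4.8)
The strategy is to reduce the boundedness of $H^{-\sigma}$ to the molecular characterization established in Theorems \ref{thm10-18a} and \ref{thm10-9}. Concretely, given $f\in F_{p(\cdot),q(\cdot)}^{\az(\cdot),H}(\rn)$, I would first invoke Theorem \ref{thm10-18a} to write $f=\sum_{v\in\nn_0}\sum_{Q\in\cd_v}s_Qm_Q$ in $\cs'(\rn)$, where $\{m_Q\}$ are $(H,M,N)$-molecules (and $(H,M)$-zero level molecules for $v=0$) with
$\lf\|\{s_Q\}\r\|_{f_{p(\cdot),q(\cdot)}^{\az(\cdot)}(\rn)}\ls \|f\|_{F_{p(\cdot),q(\cdot)}^{\az(\cdot),H}(\rn)}$, choosing $M$ and $N$ large enough so that, after applying $H^{-\sigma}$, they still satisfy the molecular size bounds required by Theorem \ref{thm10-9} for the target space $F_{p(\cdot),q(\cdot)}^{\az(\cdot)+2\sigma,H}(\rn)$. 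The key point is that $H^{-\sigma}$ commutes with powers of $\sqrt H$ and with the semigroup, so $H^{-\sigma}m_Q=(\sqrt H)^{M}\lf(H^{-\sigma}b_Q\r)$, i.e. $H^{-\sigma}m_Q$ is again of the form $(\sqrt H)^{M}(\text{something})$; one then needs to verify that $H^{-\sigma}b_Q$, suitably renormalized by a factor $\ell(Q)^{2\sigma}=2^{-2v\sigma}$, obeys the derivative/decay estimates of Definition \ref{def12-1}. Absorbing that normalizing factor $2^{-2v\sigma}$ into the coefficients, i.e. setting $\widetilde{s}_Q:=2^{-2v\sigma}s_Q$ (for $Q\in\cd_v$) and $\widetilde{m}_Q:=2^{2v\sigma}H^{-\sigma}m_Q$, one gets $H^{-\sigma}f=\sum_v\sum_{Q\in\cd_v}\widetilde{s}_Q\widetilde{m}_Q$, and a direct computation shows $\lf\|\{\widetilde s_Q\}\r\|_{f_{p(\cdot),q(\cdot)}^{\az(\cdot)+2\sigma}(\rn)}=\lf\|\{s_Q\}\r\|_{f_{p(\cdot),q(\cdot)}^{\az(\cdot)}(\rn)}$ since the extra $2^{2v\sigma}$ in the definition of $f_{p(\cdot),q(\cdot)}^{\az(\cdot)+2\sigma}$ exactly cancels the $2^{-2v\sigma}$ in $\widetilde s_Q$. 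Then Theorem \ref{thm10-9} applied to the target space yields $H^{-\sigma}f\in F_{p(\cdot),q(\cdot)}^{\az(\cdot)+2\sigma,H}(\rn)$ with the desired norm bound.

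The technical heart is therefore the molecule estimate: one must show that if $b$ satisfies $|(\sqrt H)^k b(x)|\le \ell(Q)^{M-k}|Q|^{-1/2}(1+|x-x_Q|/\ell(Q))^{-n-N}$ for $k=0,\dots,2M$, then $2^{2v\sigma}H^{-\sigma}b$ satisfies the analogous bound with $M$ replaced by some $M'$ (and $N$ by $N'$). For this I would use the subordination-type formula \eqref{5-26a} together with the Gaussian bounds \eqref{eq10-20} and \eqref{eq10-20a} for the kernels $W_t$ and $W_{t,k}$: write $(\sqrt H)^k H^{-\sigma}b(x)=\frac{1}{\Gamma(\sigma)}\int_0^\fz t^\sigma (\sqrt H)^k e^{-tH}b(x)\dt$, split the integral at $t\sim\ell(Q)^2$, and in each regime estimate $(\sqrt H)^k e^{-tH}b$ by pairing $b$ against the kernel and exploiting the Gaussian decay of $W_{t,j}$ against the polynomial tails of $b$. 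The convergence of the $t$-integral near $t=\fz$ is exactly where the hypothesis $\sigma>n/2$ is used: since $b\in L^1$ with norm controlled by $|Q|^{1/2}$ roughly, $\|e^{-tH}b\|_\fz\ls t^{-n/2}\|b\|_1$ for large $t$, so $\int_1^\fz t^\sigma t^{-n/2}\dt<\fz$ precisely when $\sigma>n/2$; more carefully one tracks the $x$-dependence to recover the polynomial decay $(1+|x-x_Q|/\ell(Q))^{-n-N'}$, possibly at the cost of lowering $N'$ and adjusting $M'$, which is harmless since $M,N$ in the decomposition of $f$ were chosen as large as we like.

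I expect the main obstacle to be this molecule-to-molecule estimate, specifically bookkeeping the interplay between the scale $\ell(Q)$, the extra smoothing provided by $H^{-\sigma}$ (which raises the order of the $(\sqrt H)^M$ factor effectively by $2\sigma$ in terms of allowed powers $k=0,\dots,2M'$), and the decay exponent; one has to be slightly careful that $H^{-\sigma}b$ really lies in the domain of $(\sqrt H)^{M'}$ and that all the kernel manipulations are justified for distributions via the duality definition of $H^{-\sigma}$ on $\cs'(\rn)$. A secondary, more routine, point is checking the $\cs'(\rn)$-convergence of $H^{-\sigma}f=\sum_Q \widetilde s_Q\widetilde m_Q$, which follows by continuity of $H^{-\sigma}$ on $\cs'(\rn)$ (from Lemma \ref{lem10-19a}) applied termwise to the convergent series for $f$. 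Once the molecule estimate is in hand, everything else is a direct application of the two characterization theorems, so the proof is short modulo that lemma.
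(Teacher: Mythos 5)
Your architecture is exactly the paper's: decompose $f$ by Theorem \ref{thm10-18a} into high-order molecules, show that $|Q|^{-2\sigma/n}H^{-\sigma}m_Q=2^{2v\sigma}H^{-\sigma}m_Q$ is a multiple of a lower-order molecule via the formula \eqref{5-26a} split at $t\sim\ell(Q)^2$ together with the Gaussian bounds \eqref{eq10-20} and \eqref{eq10-20a}, absorb $|Q|^{2\sigma/n}$ into the coefficients so that the $f_{p(\cdot),q(\cdot)}^{\az(\cdot)+2\sigma}(\rn)$-norm of the new coefficients equals the $f_{p(\cdot),q(\cdot)}^{\az(\cdot)}(\rn)$-norm of the old ones, and close with Theorem \ref{thm10-9}; the paper does precisely this, starting from $(H,4M,N)$-molecules and landing on $(H,2M,N)$-molecules.

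However, the mechanism you sketch for the crucial $t$-integral is wrong at $t\to\fz$, and the hypothesis $\sigma>n/2$ is misplaced. Using only $\|e^{-tH}b\|_{L^\fz(\rn)}\ls t^{-n/2}\|b\|_{L^1(\rn)}$ at infinity gives the tail integral $\int_1^\fz t^{\sigma-n/2}\,\frac{dt}{t}$, which converges precisely when $\sigma<n/2$; in the theorem's range $\sigma>n/2$ it diverges, so the proof as you describe it does not close. What actually makes the large-$t$ part converge is shifting powers of $H$ from the molecule onto the semigroup: writing $e^{-tH}(\sqrt H)^{2M+k}b_Q=t^{-M}(tH)^{M}e^{-tH}(\sqrt H)^{k}b_Q$ and using the Gaussian bound for the kernel $W_{t,M}$ of $(tH)^Me^{-tH}$, one gains the factor $t^{-M}$; after trading decay in $|x-x_Q|/\sqrt t$ for decay in $2^{v}|x-x_Q|$ at the cost of $(4^{v}t)^{(n+N)/2}$, the tail becomes $\int_{4^{-v}}^\fz t^{\sigma+(n+N)/2-M}\,\frac{dt}{t}$, finite because $M$ is large. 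The condition $\sigma>n/2$ is needed instead on the small-$t$ piece: there the estimate of $e^{-tH}$ acting on the molecule's tail carries a loss $(2^{-v}/\sqrt t)^{n}$, producing $\int_0^{4^{-v}}t^{\sigma-n/2}\,\frac{dt}{t}$, which is finite exactly when $\sigma>n/2$ and yields the factor $2^{-v(2\sigma-n)}$ that cancels $|Q|^{-2\sigma/n}$ in the target bound \eqref{eq10-19}. Keep your plan, but redo the splitting of \eqref{5-26a} at $t=4^{-v}$ with these two distinct mechanisms; with your current bookkeeping the molecule-to-molecule estimate, which you correctly identify as the technical heart, fails.
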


\begin{proof}
Let $f\in F_{p(\cdot),q(\cdot)}^{\az(\cdot),H}(\rn)$,
$M,\ N\in\nn$ large enough.
By Theorem \ref{thm10-18a}, $f$ has the decomposition
$$f=\sum_{v\in\nn_0,Q\in \cd_v}s_Qm_Q,\quad {\rm in}\quad \cs'(\rn),$$
where  $\{m_Q\}_{v\in\nn,Q\in \cd_v}$ are $(H,4M,N)$-molecules,
$\{m_Q\}_{Q\in \cd_0}$ are $(H,4M)$-zero level molecules and the sequence
$\{s_Q\}_{v\in\nn_0,Q\in \cd_v}\in f_{p(\cdot),q(\cdot)}^{\az(\cdot)}(\rn)$ satisfying
$$\|\{s_Q\}_{v\in\nn_0,Q\in \cd_v}\|_{f_{p(\cdot),q(\cdot)}^{\az(\cdot)}(\rn)}
\ls \|f\|_{F_{p(\cdot),q(\cdot)}^{\az(\cdot),H}(\rn)}.$$
By the definition of $H^{-\sigma}$, we find that
$$H^{-\sigma}f=\sum_{v\in\nn_0,Q\in \cd_v}s_QH^{-\sigma}m_Q
=\sum_{v\in\nn_0,Q\in \cd_v}[|Q|^{2\sigma/n}s_Q]\lf[|Q|^{-2\sigma/n}H^{-\sigma}m_Q\r].$$
Therefore, to prove Theorem \ref{thm10-25}, it is enough to show that
$|Q|^{-2\sigma/n}H^{-\sigma}m_Q$ is a multiple of $(H,2M,N)$-molecule.

Since $m_Q$, with $Q\in\cd_v,\ v\in\nn$, is a $(H,4M,N)$-molecule, it follows that there
exists a function $b_Q$ such that $m_Q=(\sqrt{H})^{4M}b_Q$.
Then
$$H^{-\sigma}m_Q=(\sqrt{H})^{2M}\lf[H^{-\sigma}(\sqrt{H})^{2M}b_Q\r].$$
Next we claim that
\begin{equation}\label{eq10-19}
|(\sqrt{H})^k|Q|^{-2\sigma/n}H^{-\sigma}(\sqrt{H})^{2M}b_Q(x)|\ls \ell(Q)^{2M-k}|Q|^{-\frac12}
\lf(1+2^v|x-x_Q|\r)^{-n-N},
\end{equation}
for all $k=0,\cdots,4M$. Indeed, from \eqref{5-26a}, we deduce that, for any $x\in\rn$,
\begin{align}\label{7-4a}
\lf|H^{-\sigma}\lf(|Q|^{-2\sigma/n}(\sqrt{H})^{2M+k}b_Q\r)(x)\r|
&=\lf|\frac1{\Gamma(\sigma)}\int_0^{\fz}t^{\sigma}
e^{-tH}\lf(|Q|^{-2\sigma/n}(\sqrt{H})^{2M+k}b_Q\r)(x)\frac{dt}t\r|\\
&\ls\int_0^{4^{-v}}\lf|t^{\sigma}e^{-tH}\lf(|Q|^{-2\sigma/n}(\sqrt{H})^{2M+k}
b_Q\r)(x)\r|\frac{dt}t\noz\\
&\hs\hs+\int_{4^{-v}}^{\fz}\lf|t^{\sigma}e^{-tH}\lf(|Q|^{-2\sigma/n}(\sqrt{H})^{2M+k}b_Q\r)(x)\r|
\frac{dt}t\noz\\
&=:{\rm J}_1(x)+{\rm J}_2(x)\noz.
\end{align}
For ${\rm J}_1(x)$, by \eqref{eq10-20}, we have
\begin{align*}
&\lf|e^{-tH}\lf(|Q|^{-2\sigma/n}(\sqrt{H})^{2M+k}b_Q\r)(x)\r|\\
&\quad=\int_\rn|W_t(y,x)||Q|^{-2\sigma/n}|(\sqrt{H})^{2M+k}b_Q(y)|dy\\
&\quad\ls\int_\rn\frac{1}{t^{n/2}}\exp\lf(-c\frac{|x-y|^2}t\r)
|Q|^{-2\sigma/n}|(\sqrt{H})^{2M+k}b_Q(y)|dy\\
&\quad\ls\int_\rn\frac{1}{t^{n/2}}\lf(1+\frac{|x-y|}{\sqrt{t}}\r)^{-n-N}
|Q|^{-2\sigma/n}2^{-v(2M-k)}|Q|^{-\frac12}\lf(1+\frac{|y-x_Q|}{2^{-v}}\r)^{-n-N}dy.
\end{align*}
Noting that $0<t<4^{-v}$ in this case, it follows from Lemma \ref{le7-21c} that
$$\lf|e^{-tH}\lf(|Q|^{-2\sigma/n}(\sqrt{H})^{2M+k}b_Q\r)(x)\r|\ls|Q|^{-2\sigma/n}2^{-v(2M-k)}|Q|^{-\frac12}
\lf(1+\frac{|x-x_Q|}{2^{-v}}\r)^{-n-N}\lf(\frac{2^{-v}}{\sqrt{t}}\r)^n.$$
Thus, as long as $\sigma>n/2$, we have
\begin{align}\label{eq10-20b}
{\rm J}_1(x)&\ls|Q|^{-2\sigma/n}2^{-v(2M-k+n)}|Q|^{-\frac12}\lf(1+\frac{|x-x_Q|}{2^{-v}}\r)^{-n-N}
\int_0^{4^{-v}}t^{\sigma-n/2}\frac{dt}t\\
&\ls2^{-v(2M-k)}|Q|^{-\frac12}\lf(1+\frac{|x-x_Q|}{2^{-v}}\r)^{-n-N}\noz.
\end{align}

For ${\rm J}_2(x)$, by \eqref{eq10-20a}, we know that
\begin{align*}
&\lf|e^{-tH}\lf(|Q|^{-2\sigma/n}(\sqrt{H})^{2M+k}b_Q\r)(x)\r|\\
&\quad=t^{-M}\int_\rn|W_{t,M}(y,x)|Q|^{-2\sigma/n}(\sqrt{H})^kb_Q(y)|dy\\
&\quad\ls t^{-M}\int_\rn\frac{1}{t^{n/2}}\exp\lf(-c\frac{|x-y|^2}t\r)
|Q|^{-2\sigma/n}|(\sqrt{H})^kb_Q(y)|dy\\
&\quad\ls t^{-M}\int_\rn\frac{1}{t^{n/2}}\lf(1+\frac{|x-y|}{\sqrt{t}}\r)^{-n-N}
|Q|^{-2\sigma/n}2^{-v(4M-k)}|Q|^{-\frac12}\lf(1+\frac{|y-x_Q|}{2^{-v}}\r)^{-n-N}dy.
\end{align*}
By this, Lemma \ref{le7-21c} and the fact $t\ge4^{-v}$ in this case, we conclude that
\begin{align*}
&\lf|e^{-tH}\lf(|Q|^{-2\sigma/n}(\sqrt{H})^{2M+k}b_Q\r)(x)\r|\\
&\quad\ls t^{-M}2^{-v(4M-k)}
|Q|^{-\frac12}\lf(1+\frac{|x-x_Q|}{\sqrt{t}}\r)^{-n-N}\\
&\quad\ls (4^v t )^{(n+N)/2}t^{-M}|Q|^{-2\sigma/n}2^{-v(4M-k)}|Q|^{-\frac12}
\lf(1+\frac{|x-x_Q|}{2^{-v}}\r)^{-n-N},
\end{align*}
which implies that
\begin{align*}
{\rm J}_2(x)&\ls|Q|^{-2\sigma/n}2^{-v(4M-k-n-N)}
|Q|^{-\frac12}\lf(1+\frac{|x-x_Q|}{2^{-v}}\r)^{-n-N}
\int_{4^{-v}}^{\fz}t^{\sigma+(n+N)/2-M}\frac{dt}t\\
&\ls2^{-v(2M-k)}|Q|^{-\frac12}\lf(1+\frac{|x-x_Q|}{2^{-v}}\r)^{-n-N}.
\end{align*}
This, combined with \eqref{7-4a} and \eqref{eq10-20b}, implies \eqref{eq10-19} and then
$|Q|^{-2\sigma/n}H^{-\sigma} m_Q$ is a multiple of $(H,2M,N)$ molecule.
Similarly, for $Q\in\cd_0$, if $m_Q$ is a $(H,4M)$-zero level molecule,
$H^{-\sigma}m_Q$ is a multiple $(H,2M)$-zero level molecule.
This completes the proof of Theorem \ref{thm10-25}.
\end{proof}

\begin{rem}
By an argument as in Theorem \ref{thm10-25},
we obtain the boundedness of the Bessel potential
of the Hermite operator $(I+H)^{-\sigma}$ on variable Triebel-Lizorkin spaces
$F_{p(\cdot),q(\cdot)}^{\az(\cdot),H}(\rn)$, the details being omitted here.
\end{rem}

\subsection{Boundedness of Spectral Multipliers\label{s-SM}}

In this subsection, we study the spectral multipliers of Laplace type for the Hermite
operators in the following form:
\begin{equation}\label{eq10-25}
m(H):=\int_0^{\fz}\phi(t)He^{-tH}dt,
\end{equation}
where $\phi\in L^{\fz}(\rr)$ is given.

Following the same discussions of \cite[Proposition 2.5]{BD15}, we immediately find
that $m(H)\psi\in\cs(\rn)$ in the wake of $\psi\in\cs(\rn)$. Hence, for
$f\in\cs'(\rn)$, $m(H)f$ can be defined as a functional in $\cs'(\rn)$ by setting
$$\langle m(H)f, \psi\rangle:=\langle f, m(H)\psi\rangle.$$

We state the main result of this subsection as follows.
\begin{thm}\label{thm5-27}
Let $\az(\cdot),p(\cdot),q(\cdot)$ and $H$ be as in Definition {\rm{\ref{de9-16a}}}.
The spectral multiplier $m(H)$ in \eqref{eq10-25} is bounded
on $F_{p(\cdot),q(\cdot)}^{\az(\cdot),H}(\rn)$.
\end{thm}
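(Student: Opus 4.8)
The plan is to exploit the molecular characterization established in Theorems \ref{thm10-18a} and \ref{thm10-9}: it suffices to show that $m(H)$ maps a molecule (up to a harmless normalization) to a fixed multiple of another molecule of the same type, with a uniform constant. Thus, given $f\in F_{p(\cdot),q(\cdot)}^{\az(\cdot),H}(\rn)$, I would first apply Theorem \ref{thm10-18a} with $M,N\in\nn$ chosen large enough (large as required by the hypotheses of Theorem \ref{thm10-9}) to write $f=\sum_{v\in\nn_0}\sum_{Q\in\cd_v}s_Qm_Q$ in $\cs'(\rn)$, where the $m_Q$ with $Q\in\cd_v$, $v\in\nn$, are $(H,2M,N)$-molecules, the $m_Q$ with $Q\in\cd_0$ are $(H,2M)$-zero level molecules, and $\|\{s_Q\}\|_{f_{p(\cdot),q(\cdot)}^{\az(\cdot)}(\rn)}\ls\|f\|_{F_{p(\cdot),q(\cdot)}^{\az(\cdot),H}(\rn)}$. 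Since $m(H)$ commutes with $\sqrt H$ (both being functions of $H$), for a molecule $m_Q=(\sqrt H)^{2M}b_Q$ we have $m(H)m_Q=(\sqrt H)^M[(\sqrt H)^M m(H)b_Q]$, so the task reduces to proving the size estimates
\begin{equation}\label{eq-sm-claim}
|(\sqrt H)^k m(H)b_Q(x)|\ls \ell(Q)^{M-k}|Q|^{-\frac12}\lf(1+\frac{|x-x_Q|}{\ell(Q)}\r)^{-n-N},\qquad k=0,\dots,2M,
\end{equation}
with an implicit constant independent of $Q$ and of the particular molecule, and the analogous estimate for $k=M,\dots,2M$ only in the zero-level case.

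To verify \eqref{eq-sm-claim} I would insert the representation \eqref{eq10-25}, writing $m(H)b_Q=\int_0^\fz\phi(t)He^{-tH}b_Q\,dt$, and split the integral at the natural scale $t\sim 4^{-v}=\ell(Q)^2$, exactly as in the proof of Theorem \ref{thm10-25}. On $(\sqrt H)^k$ applied under the integral one uses $(\sqrt H)^k H e^{-tH}b_Q=(\sqrt H)^{k+2}e^{-tH}b_Q$ together with the Gaussian kernel bounds \eqref{eq10-20} for $W_t$ and \eqref{eq10-20a} for $W_{t,j}$ (the latter after extracting powers of $t$ to rewrite $H^j e^{-tH}$ in terms of $(tH)^j e^{-tH}$); the molecule's own decay $(1+|y-x_Q|/\ell(Q))^{-n-N}$ on $(\sqrt H)^{k'}b_Q$ for the appropriate $k'$ is then combined with the Gaussian via the standard convolution/Lemma \ref{le7-21c}-type estimate to reproduce the spatial factor $(1+|x-x_Q|/\ell(Q))^{-n-N}$. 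The boundedness $\|\phi\|_{L^\fz(\rr)}<\fz$ makes the $t$-integral converge at both ends: near $t=0$ because each factor of $He^{-tH}$ against a molecule of high smoothness order contributes positive powers of $t$ (here one needs $M$, equivalently the smoothness of $b_Q$, large enough to absorb the $t^{-n/2}$ from the Gaussian; this is where the largeness of $M$ beyond the thresholds of Theorem \ref{thm10-9} is consumed), and near $t=\fz$ because the factor $(\sqrt H)^k=t^{-k/2}(t\sqrt H)^k$ together with $He^{-tH}$ gives decay in $t$. Having established that $|Q|^{0}m(H)m_Q$ (no renormalization is needed since the index $\az(\cdot)$ is unchanged) is a uniform multiple of an $(H,M,N)$-molecule for $v\ge 1$ and of an $(H,M)$-zero level molecule for $v=0$, I invoke Theorem \ref{thm10-9} to conclude $m(H)f\in F_{p(\cdot),q(\cdot)}^{\az(\cdot),H}(\rn)$ with $\|m(H)f\|\ls\|\{s_Q\}\|_{f_{p(\cdot),q(\cdot)}^{\az(\cdot)}(\rn)}\ls\|f\|_{F_{p(\cdot),q(\cdot)}^{\az(\cdot),H}(\rn)}$.

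The main obstacle I anticipate is the bookkeeping at the two ends of the $t$-integral in \eqref{eq-sm-claim}: one must choose $M$ (hence the smoothness order $2M$ of the input molecules produced by Theorem \ref{thm10-18a}) large enough that, after pulling out all powers of $t$ needed to convert $H^j e^{-tH}$ into $(tH)^j e^{-tH}$ and to realize $(\sqrt H)^k$ as $t^{-k/2}(t\sqrt H)^k$, the surviving power $t^{\gamma}$ with $\gamma$ depending on $M,N,n,k,\sigma$-type exponents is integrable near $0$ and near $\fz$ simultaneously; this is the same mechanism as in Theorem \ref{thm10-25} but with $\sigma$ replaced by the bounded weight $\phi(t)$, so the constraint is purely on $M$ and $N$ and can be met by the earlier freedom in choosing them. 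A secondary technical point is justifying that $m(H)$ commutes with the decomposition, i.e. that $m(H)f=\sum_{v,Q}s_Q\,m(H)m_Q$ in $\cs'(\rn)$; this follows from the definition $\langle m(H)f,\psi\rangle=\langle f,m(H)\psi\rangle$, the fact that $m(H)\psi\in\cs(\rn)$ for $\psi\in\cs(\rn)$, and the $\cs'(\rn)$-convergence of the molecular series, so it should be routine.
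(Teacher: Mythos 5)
Your proposal is correct and is essentially the paper's own argument: the paper likewise combines the molecular decomposition of Theorem \ref{thm10-18a} and the synthesis Theorem \ref{thm10-9} with the representation \eqref{eq10-25}, splits the $t$-integral at $\ell(Q)^2=4^{-v}$, and uses the Gaussian bounds \eqref{eq10-20} and \eqref{eq10-20a} together with $\|\phi\|_{L^\fz(\rr)}<\fz$ to show that $m(H)$ maps each $(H,4M,N)$-(zero level) molecule to a fixed multiple of an $(H,2M,N)$-(zero level) molecule, so your $2M\to M$ bookkeeping is the same idea. One small correction: since $m(H)m_Q=(\sqrt H)^{M}\lf[(\sqrt H)^{M}m(H)b_Q\r]$, the size estimates to prove are $|(\sqrt H)^{M+k}m(H)b_Q(x)|\ls\ell(Q)^{M-k}|Q|^{-1/2}(1+|x-x_Q|/\ell(Q))^{-n-N}$ for $k=0,\dots,2M$, i.e.\ for the new $b$-function $(\sqrt H)^{M}m(H)b_Q$ rather than for $m(H)b_Q$ itself as in your displayed claim (for small $k$ that version offers too few powers of $H$ to extract the $t$-decay needed at the large-$t$ end of the integral), and with this reindexing your split-at-$\ell(Q)^2$ argument goes through exactly as in the proof of Theorem \ref{thm10-25}.
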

\begin{proof}
The proof of this theorem is similar to that of Theorem \ref{thm10-25}.
For similarity, we only prove that, for any $(H,4M,N)$-(zero level)
molecule $m_Q=H^{2M}b_Q$, $m(H)m_Q$ is an $(H,2M,N)$-(zero level) molecule
associated to the same dyadic cube $Q\in\cd_v$ for some $v\in\nn$ ($v=0$).

Indeed, when $v\in\nn$, we write $m(H)m_Q=H^M\widetilde{b}_Q$, where
$$\widetilde{b}_Q:=\int_0^\fz\phi(t)He^{-tH}H^Mb_Qdt.$$
For $k=0,\cdots,4M$, we have, for any $x\in\rn$,
\begin{align*}
|(\sqrt{H})^k\widetilde{b}_Q(x)|&\le\int_0^{4^{-v}}|\phi(t)e^{-tH}H^{M+k/2+1}b_Q(x)|dt+
\int_{4^{-v}}^\fz|\phi(t)H^{M+1}e^{-tH}(\sqrt{H})^kb_Q(x)|dt\\
&\le\int_0^{4^{-v}}|e^{-tH}H^{M+k/2+1}b_Q(x)|dt+
\int_{4^{-v}}^\fz|H^{M+1}e^{-tH}(\sqrt{H})^kb_Q(x)|dt.
\end{align*}
At this stage, we obtain
$$|(\sqrt{H})^k\widetilde{b}_Q(x)|\ls2^{-v(2M-k)}
|Q|^{-\frac12}\lf(1+\frac{|x-x_Q|}{2^{-v}}\r)^{-n-N}.$$
And the same thing also happens in case of $v=0$. This completes the proof of Theorem \ref{thm5-27}.
\end{proof}

\section{Relationships between Spaces $F_{p(\cdot),q(\cdot)}^{\az(\cdot),H}(\rn)$
and $F_{p(\cdot),q(\cdot)}^{\az(\cdot)}(\rn)$}

In this section, we give an embedding from variable Triebel-Lizorkin space
$F_{p(\cdot),q(\cdot)}^{\az(\cdot)}(\rn)$
introduced by Diening et al. \cite{DHR09}
into the space $F_{p(\cdot),q(\cdot)}^{\az(\cdot),H}(\rn)$,
via the atomic decomposition of the space
$F_{p(\cdot),q(\cdot)}^{\az(\cdot)}(\rn)$ and molecule decomposition of the space
$F_{p(\cdot),q(\cdot)}^{\az(\cdot),H}(\rn)$,.

We begin with the following definition of atoms.

\begin{defn}
Let $k\in\zz$, $l\in\nn_0$ and $M\ge n$. A function $a$ is called
a $(k,l,M)$-smooth atom associated to a dyadic cube $Q\in \cd_v$ with
$v\in\nn_0$, if it satisfies the following conditions for some $\wz m>M$:
\begin{enumerate}
\item[{\rm(i)}] $\supp a\subset3Q$;

\item[{\rm(ii)}] if $v\in\nn$, then $\int_\rn x^{\gamma}a(x)=0$ for all $|\gamma|\le k$;

\item[{\rm(iii)}] $\big|D^{\gamma}a(x)\big|\le2^{|\gamma|v}|Q|^{1/2}
\eta_{v,\wz m}(x-x_Q)$ for all multi-indices $\gamma\in\nn_0^n$ with $|\gamma|\le l$.
\end{enumerate}
The conditions (i), (ii) and (iii) are called the \emph{support}, \emph{moment} and
\emph{decay conditions}, respectively.
\end{defn}

\begin{rem}
The number $M$ needs to be chosen sufficiently large. Since $M$ is allowed depending
on the parameters, we will usually omit it from our notation of atoms.
\end{rem}
The atomic decomposition of the space $F_{p(\cdot),q(\cdot)}^{\az(\cdot)}(\rn)$ is stated as
follows (\cite[Theorem 3.11]{DHR09}).

\begin{lem}\label{lem12-1}
Let $\alpha(\cdot)$, $p(\cdot)$ and $q(\cdot)$ be as in Definition \ref{de9-16a},
and $f\in F_{p(\cdot),q(\cdot)}^{\az(\cdot)}(\rn)$.
For any $\varepsilon>0$, let $K=n/\min\{1,p^-,q^-\}-n+\varepsilon$ and $L=\az^++1+\varepsilon$.
Then there exist a family of
($K, L$)-atoms $\{a_Q\}_{Q\in \cd_v,v\in\nn_0}$ and a sequence of coefficients
$\{s_Q\}_{Q\in \cd_v,v\in\nn_0}\in f_{p(\cdot),q(\cdot)}^{\az(\cdot)}(\rn)$
such that
$$f= \sum_{Q\in \cd_v,v\in\nn_0}s_Qa_Q\quad {\rm in}\quad \cs'(\rn)$$
and $\|\{s_Q\}_{Q\in \cd_v,v\in\nn_0}\|_{f_{p(\cdot),q(\cdot)}^{\az(\cdot)}(\rn)}
\sim \|f\|_{F_{p(\cdot),q(\cdot)}^{\az(\cdot)}(\rn)}$, where the implicit constants are
independent of $f$.
\end{lem}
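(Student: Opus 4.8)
The plan is to adapt the Frazier--Jawerth smooth atomic decomposition to the variable-exponent setting of \cite{DHR09}; the one feature not available classically is the Fefferman--Stein vector-valued inequality for nonconstant $q(\cdot)$, which will be replaced throughout by the convolution inequality of Lemma \ref{lem10-17a} together with the device of moving a factor $2^{v\az(\cdot)}$ inside a convolution at the cost of lowering the kernel exponent by $C_{\log}(\az)$ (see \cite[Lemma 6.1]{DHR09}). First I would fix an inhomogeneous Calder\'on reproducing formula adapted to atoms: choose $\Phi,\varphi\in\cs(\rn)$ with $\wh\varphi$ supported in $\{1/2\le|\xi|\le2\}$ and $\wh\Phi$ supported near the origin, and choose $\Psi,\psi\in\cs(\rn)$ with $\supp\psi\subset B(0,1)$ and $\int_\rn x^\gamma\psi(x)\,dx=0$ for all $|\gamma|\le K$, such that
$$f=\Psi\ast(\Phi\ast f)+\sum_{v\ge1}\psi_v\ast(\varphi_v\ast f)\qquad\text{in }\cs'(\rn),$$
where $g_v(\cdot):=2^{vn}g(2^v\cdot)$. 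The key structural point is that $\varphi_v\ast f$ is band-limited with spectrum in $\{|\xi|\sim2^v\}$, while $\psi_v$ is compactly supported and has vanishing moments of order $K$; this is exactly what turns the building blocks into \emph{compactly supported} smooth atoms.

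Next I would discretize. Writing $\psi_v\ast(\varphi_v\ast f)(x)=\sum_{Q\in\cd_v}\int_Q\psi_v(x-y)(\varphi_v\ast f)(y)\,dy$ for $v\ge1$, and the $v=0$ term analogously with $\varphi_0:=\Phi$, $\psi_0:=\Psi$ and no moment condition, I would set, for $Q\in\cd_v$ with $v\in\nn_0$,
$$s_Q:=C|Q|^{1/2}\sup_{y\in 3Q}|\varphi_v\ast f(y)|,\qquad a_Q(x):=\frac1{s_Q}\int_Q\psi_v(x-y)(\varphi_v\ast f)(y)\,dy,$$
so that $f=\sum_{v\in\nn_0}\sum_{Q\in\cd_v}s_Qa_Q$. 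That each $a_Q$ is a $(K,L)$-atom associated with $Q$ is then a routine check: $\supp a_Q\subset3Q$ follows from $\supp\psi\subset B(0,1)$; the moments $\int x^\gamma a_Q=0$ for $|\gamma|\le K$ (when $v\ge1$) follow from the vanishing moments of $\psi$; and from $|D^\gamma\psi_v(x-y)|\ls2^{(n+|\gamma|)v}$ and the normalization one gets $|D^\gamma a_Q(x)|\ls2^{|\gamma|v}|Q|^{-1/2}$, which on $3Q$ is $\sim2^{|\gamma|v}|Q|^{1/2}\eta_{v,\wz m}(x-x_Q)$ for every $\wz m$ and all $|\gamma|\le L$ (recall $L=\az^++1+\vep$). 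Convergence of $\sum s_Qa_Q$ in $\cs'(\rn)$ will follow from the norm estimate of the next step.

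The heart of the matter is the bound $\|\{s_Q\}_{Q\in\cd_v,v\in\nn_0}\|_{f_{p(\cdot),q(\cdot)}^{\az(\cdot)}(\rn)}\ls\|f\|_{F_{p(\cdot),q(\cdot)}^{\az(\cdot)}(\rn)}$. Since $\varphi_v\ast f$ is band-limited, for $x\in Q\in\cd_v$ the Peetre maximal function $(\varphi_v\ast f)^{*}_{R}(x):=\sup_{z\in\rn}|\varphi_v\ast f(x-z)|(1+2^v|z|)^{-R}$ dominates $\sup_{y\in3Q}|\varphi_v\ast f(y)|$, and the ``$r$-trick'' gives $(\varphi_v\ast f)^{*}_{R}(x)\ls\big(\eta_{v,Rr}\ast|\varphi_v\ast f|^r(x)\big)^{1/r}$ for any fixed $r\in(0,\min\{1,p^-,q^-\})$ and $R$ large. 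Using $\sum_{Q\in\cd_v}\textbf{1}_Q\equiv1$ and $|s_Q||Q|^{-1/2}=C\sup_{3Q}|\varphi_v\ast f|$, it follows that
$$2^{v\az(x)}\sum_{Q\in\cd_v}|s_Q||Q|^{-1/2}\textbf{1}_Q(x)\ls\Big(\eta_{v,Rr-rC_{\log}(\az)}\ast\big(2^{vr\az(\cdot)}|\varphi_v\ast f|^r\big)(x)\Big)^{1/r},$$
where the exponent lowering is \cite[Lemma 6.1]{DHR09} and is admissible precisely because $K=n/\min\{1,p^-,q^-\}-n+\vep$ leaves room for $Rr-rC_{\log}(\az)>n$. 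Taking $\ell^{q(\cdot)}$ and then $L^{p(\cdot)}(\rn)$ norms, applying Lemma \ref{lem10-17a} with exponents $p(\cdot)/r$ and $q(\cdot)/r$ (legitimate since $p^-/r,\,q^-/r>1$), and undoing the $r$th powers, one obtains $\|\{s_Q\}\|_{f_{p(\cdot),q(\cdot)}^{\az(\cdot)}(\rn)}\ls\big\|\big(\sum_{v\in\nn_0}(2^{v\az(\cdot)}|\varphi_v\ast f|)^{q(\cdot)}\big)^{1/q(\cdot)}\big\|_{L^{p(\cdot)}(\rn)}\sim\|f\|_{F_{p(\cdot),q(\cdot)}^{\az(\cdot)}(\rn)}$, the last equivalence being the $\varphi$-transform characterization of $F_{p(\cdot),q(\cdot)}^{\az(\cdot)}(\rn)$ from \cite{DHR09}.

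Finally, for the reverse inequality $\|f\|_{F_{p(\cdot),q(\cdot)}^{\az(\cdot)}(\rn)}\ls\|\{s_Q\}_{Q\in\cd_v,v\in\nn_0}\|_{f_{p(\cdot),q(\cdot)}^{\az(\cdot)}(\rn)}$, one fixes an analyzing family $\{\varphi_u\}_{u\in\nn_0}$ and estimates $\varphi_u\ast a_Q$ for a $(K,L)$-atom $a_Q$ at scale $2^{-v}$: a factor $2^{-|u-v|\gamma}$ with $\gamma$ controlled by $L+n$ is gained when $u\ge v$ from the smoothness of the atom, and one with $\gamma$ controlled by $K+n+1$ when $u<v$ from its vanishing moments. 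Summing over $Q\in\cd_v$ via Lemma \ref{le7-21b} (whose hypothesis $R>n/\tau$ with $\tau<\min\{1,p^-,q^-\}$ is exactly what forces $K>n/\min\{1,p^-,q^-\}-n$), summing the geometric series in $|u-v|$ (the gains beat the factors $2^{|u-v|\max\{\az^+,-\az^-\}}$ coming from $2^{u\az(\cdot)}$ precisely by the assumptions $L>\az^+$ and $K>-\az^--1$), and invoking Lemma \ref{lem10-17a} once more yields the claim; this is exactly the scheme already carried out in the Hermite setting in the proof of Theorem \ref{thm10-9}, with $(t\sqrt H)^me^{-t\sqrt H}m_Q$ there playing the role of $\varphi_u\ast a_Q$ here. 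Combining the two directions gives the asserted equivalence. The main obstacle throughout is the absence of a Fefferman--Stein maximal inequality for nonconstant $q(\cdot)$: every step that would classically be closed by such an inequality must instead be routed through band-limitedness, the $r$-trick, and Lemma \ref{lem10-17a}, and one must keep careful track of the quantitative thresholds on $K$, $L$ and $\wz m$ in terms of $p^-$, $q^-$, $\az^+$, $\az^-$ and $C_{\log}(\az)$ so that the exponents forced by that lemma stay admissible.
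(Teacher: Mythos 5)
Note first that the paper itself contains no proof of this lemma: it is quoted verbatim from \cite[Theorem 3.11]{DHR09}, so you are reconstructing the argument from scratch. Your construction and the forward estimate are essentially the standard Frazier--Jawerth/\cite{DHR09} scheme and are sound: the inhomogeneous reproducing formula with a compactly supported $\psi$ having vanishing moments up to order $K$, the discretization defining $s_Q$ and $a_Q$, the verification of the support, moment and decay conditions, and the bound $\|\{s_Q\}\|_{f_{p(\cdot),q(\cdot)}^{\az(\cdot)}(\rn)}\ls\|f\|_{F_{p(\cdot),q(\cdot)}^{\az(\cdot)}(\rn)}$ via the Peetre maximal function, the $r$-trick, \cite[Lemma 6.1]{DHR09} and Lemma \ref{lem10-17a} all go through (a small quibble: in this direction the Peetre parameter $R$ is free, so your remark that the value of $K$ ``leaves room for $Rr-rC_{\log}(\az)>n$'' is a non sequitur; $K$ plays no role in the coefficient estimate).

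The genuine gap is in the reverse inequality. You route it through a general synthesis estimate for arbitrary $(K,L)$-atoms and assert that the geometric series converges ``by the assumptions $L>\az^+$ and $K>-\az^--1$''. But $K>-\az^--1$ is not among the hypotheses: $K=n/\min\{1,p^-,q^-\}-n+\varepsilon$ depends only on $n,p,q$ and fails to exceed $-\az^--1$ once $\az$ takes sufficiently negative values. Moreover, even granting it, the bookkeeping does not close with the stated $K$: in the case $u<v$ the moment gain is $2^{-(v-u)(K+1+n)}$, while Lemma \ref{le7-21b} (applied with $j=u$) costs a factor $\max\{1,2^{(v-u)R}\}$ where $R$ must satisfy $R\tau-\tau C_{\log}(\az)>n$, i.e.\ $R>n/\tau+C_{\log}(\az)$ with $\tau<\min\{1,p^-,q^-\}$, in order to apply Lemma \ref{lem10-17a} after inserting $2^{v\az(\cdot)}$; the net exponent is then about $1+\varepsilon-C_{\log}(\az)+\az^-$, which need not be positive. (Compare the paper's own synthesis result, Theorem \ref{thm10-9}, where precisely for this reason the hypothesis is $M>n+N+\max\{\az^-,-\az^-\}$: the moment-type parameter must dominate $-\az^-$.) The repair is easy and makes the detour unnecessary: with your choice $s_Q=C|Q|^{1/2}\sup_{y\in3Q}|\varphi_v\ast f(y)|$ one has $|\varphi_v\ast f(x)|\le C^{-1}|Q|^{-1/2}|s_Q|$ for every $x\in Q\in\cd_v$, hence $2^{v\az(x)}|\varphi_v\ast f(x)|\ls 2^{v\az(x)}\sum_{Q\in\cd_v}|s_Q||Q|^{-1/2}\textbf{1}_Q(x)$, and taking the $\ell^{q(\cdot)}$- and $L^{p(\cdot)}$-norms together with the $\varphi$-transform characterization of $F_{p(\cdot),q(\cdot)}^{\az(\cdot)}(\rn)$ from \cite{DHR09} yields $\|f\|_{F_{p(\cdot),q(\cdot)}^{\az(\cdot)}(\rn)}\ls\|\{s_Q\}\|_{f_{p(\cdot),q(\cdot)}^{\az(\cdot)}(\rn)}$ with no conditions on $K$ or $L$ at all. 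With that replacement (and the zero-level $v=0$ term handled the same way), your proposal does prove the lemma.
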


\begin{lem}\label{le11-20}
Let $K\in\zz$, $L\in\nn\cap (K,\fz)$ and $m\in \nn\cap(n+K,\fz)$. Assume that $a_Q$ is
an $(K,L)$-atom for some cube
$Q\in\cd_v, v\in\zz$. Then we have
\begin{enumerate}
\item[{\rm(i)}] for all $t\le 2^{-v}$ and $x\in\rn$,
$$|(t\sqrt{H})^me^{-t\sqrt{H}}a_Q(x)|\ls|Q|^{-\frac12}
\lf(1+\frac{|x-x_Q|}{2^{-v}}\r)^{-n-K};$$

\item[{\rm(ii)}] for all $t>2^{-v}$ and $x\in\rn$,
$$|(t\sqrt{H})^me^{-t\sqrt{H}}a_Q(x)|\ls|Q|^{-\frac12}
\lf(\frac{2^{-v}}{t}\r)^L\lf(1+\frac{|x-x_Q|}{t}\r)^{-n-K},$$
\end{enumerate}
where the implicit constants are independent of $t$, $Q$ and $x$.
\end{lem}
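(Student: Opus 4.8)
\textbf{Proof plan for Lemma \ref{le11-20}.}
The plan is to estimate $(t\sqrt H)^m e^{-t\sqrt H}a_Q$ by writing it as an integral against the kernel $p_{t,m}(x,y)$ and exploiting the atom's support, moment and decay conditions against the kernel bounds from Lemma \ref{le7-21}. First I would record that, by Proposition \ref{prop10-16}(i) and the definition of $(t\sqrt H)^m e^{-t\sqrt H}$ acting on $a_Q$, we have
$$(t\sqrt H)^m e^{-t\sqrt H}a_Q(x)=\int_\rn p_{t,m}(x,y)a_Q(y)\,dy=\int_{3Q}p_{t,m}(x,y)a_Q(y)\,dy,$$
using $\supp a_Q\subset 3Q$. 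The two regimes $t\le 2^{-v}$ and $t>2^{-v}$ are handled differently: in the first regime $t$ is small relative to $\ell(Q)=2^{-v}$, so the kernel is concentrated and we only need the size/decay of $a_Q$ together with the size estimate $|p_{t,m}(x,y)|\ls t^m/(t+|x-y|)^{n+m}$ from Lemma \ref{le7-21}(ii) (using $m\ge1$); in the second regime $t$ is large, so we exploit the cancellation of $a_Q$ via a Taylor expansion of $y\mapsto p_{t,m}(x,y)$ to the order $K$, which is where the extra gain $(2^{-v}/t)^L$ comes from.

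For part (i), with $t\le 2^{-v}$: I would split $3Q$ according to whether $|x-x_Q|\le C2^{-v}$ or not. When $x$ is near $Q$, one just bounds $|a_Q(y)|\ls |Q|^{-1/2}$ (ignoring the decay factor, which is $\sim 1$ there) and integrates $t^m/(t+|x-y|)^{n+m}$ over $y\in 3Q$, which is $\ls 1$ uniformly in $t$; since $(1+|x-x_Q|/2^{-v})^{-n-K}\sim1$ in this region, the claim follows. When $|x-x_Q|\gtrsim 2^{-v}$, then $|x-y|\sim|x-x_Q|$ for $y\in 3Q$, and one uses $t^m/(t+|x-y|)^{n+m}\ls t^m|x-x_Q|^{-n-m}\le 2^{-vm}|x-x_Q|^{-n-m}$, which after multiplying by $|3Q|\sim 2^{-vn}$ and $|Q|^{-1/2}$ gives $|Q|^{-1/2}(2^{-v}/|x-x_Q|)^{m}$; since $m>n+K$ and in particular $m-n>K$, one absorbs $n$ powers to get the volume factor and $n+K$ powers into $(1+2^v|x-x_Q|)^{-n-K}$, with the leftover powers of $2^{-v}/|x-x_Q|\le1$ discarded.

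For part (ii), with $t>2^{-v}$: here I would use the moment condition $\int_\rn y^\gamma a_Q(y)\,dy=0$ for $|\gamma|\le K$ (valid since $v\in\nn$; for $v=0$ there is no cancellation but then $t>1$ and a direct size estimate suffices). Writing $P_x(y)$ for the degree-$K$ Taylor polynomial of $y\mapsto p_{t,m}(x,y)$ about $y=x_Q$, cancellation gives
$$(t\sqrt H)^m e^{-t\sqrt H}a_Q(x)=\int_{3Q}\bigl[p_{t,m}(x,y)-P_x(y)\bigr]a_Q(y)\,dy,$$
and I would bound the bracket by $C\,|y-x_Q|^{K+1}\sup_{z}|D_y^{K+1}p_{t,m}(x,z)|$ with $z$ on the segment from $x_Q$ to $y$. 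The needed derivative bound, $|D_y^{K+1}p_{t,m}(x,z)|\ls t^m (t+|x-z|)^{-n-m-K-1}$, follows from Proposition \ref{prop10-16}(i) together with scaling (the kernel of $(t\sqrt H)^m e^{-t\sqrt H}$ obeys Schwartz-type estimates with the natural $t$-scaling, which can be read off from Lemma \ref{le7-21}-type bounds for the derivatives, or derived as in \cite{BD15}). Since $|y-x_Q|\le 2^{-v}$ on $3Q$ and $t>2^{-v}$, the factor $|y-x_Q|^{K+1}$ contributes $(2^{-v})^{K+1}$; after integrating over $3Q$ (volume $\sim 2^{-vn}$) against $t^m(t+|x-x_Q|)^{-n-m-K-1}$, using $|x-z|\sim|x-x_Q|$ up to constants, and collecting $|Q|^{-1/2}2^{-vn}$, one obtains $|Q|^{-1/2}(2^{-v})^{n+K+1}t^{m}(t+|x-x_Q|)^{-n-m-K-1}\ls |Q|^{-1/2}(2^{-v}/t)^{K+1}(1+|x-x_Q|/t)^{-n-K}$, using $t^m(t+|x-x_Q|)^{-m}\le1$ and distributing the remaining powers. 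This gives the claim with $L=K+1$, hence with any $L\in\nn\cap(K,\fz)$ by monotonicity of $(2^{-v}/t)^L$ in $L$ (since $2^{-v}/t<1$), noting that the hypothesis provides enough smoothness of $a_Q$ only up to order $L$, so in fact one should Taylor-expand to order $\min\{K,L-1\}=K$, which is consistent.

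The main obstacle I anticipate is the second regime: obtaining the clean quantitative derivative estimates on $p_{t,m}(x,\cdot)$ with the correct $t$-scaling and decay in $|x-x_Q|$ simultaneously, so that the Taylor-remainder argument yields exactly the stated decay $(1+|x-x_Q|/t)^{-n-K}$ rather than something weaker. This requires a scaled version of Proposition \ref{prop10-16}(i)/Lemma \ref{le7-21}, tracking how the Schwartz seminorms of $y\mapsto p_{t,m}(x,y)$ depend on $t$; once that bookkeeping is in place the rest is routine integration over $3Q$. A secondary point to watch is the transition between the ``near'' and ``far'' splitting in part (i) and ensuring the estimate is uniform as $t\to0$, but this is harmless since all bounds there are independent of $t\in(0,2^{-v}]$.
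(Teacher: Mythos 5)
Your strategy is necessarily different from the paper's, since the paper gives no proof at all here but simply cites \cite[Lemma 6.3]{BD15}; the route you choose (kernel representation $\int_{3Q}p_{t,m}(x,y)a_Q(y)\,dy$, size estimate of Lemma \ref{le7-21} for $t\le 2^{-v}$, Taylor expansion against the vanishing moments for $t>2^{-v}$) is the standard one, and your treatment of part (i) is essentially correct. The genuine gap is in part (ii). With moment conditions only up to order $K$ you can expand $p_{t,m}(x,\cdot)$ only to order $K$, and your own computation then produces a gain of at most $\lf(2^{-v}/t\r)^{n+K+1}$ (in fact you discard the volume contribution and keep only $\lf(2^{-v}/t\r)^{K+1}$). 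The concluding step, ``hence with any $L\in\nn\cap(K,\fz)$ by monotonicity of $(2^{-v}/t)^{L}$'', goes in the wrong direction: since $2^{-v}/t<1$, a larger exponent makes the right-hand side \emph{smaller}, so the claimed estimate with exponent $L$ is strictly stronger than the one you proved whenever $L>n+K+1$, and cannot be deduced from it. This range is not vacuous and is exactly the one the paper uses: in the proof of Theorem \ref{thm12-2} the sum over $v\ge k$ converges because $L-K-n+\az^{+}>0$ with $\az^{+}<0$, which forces $L\ge n+K+1$ and exceeds $n+K+1$ as soon as $\az^{+}\le-1$. So, as written, your argument does not prove the lemma in the form in which it is applied; either an additional mechanism beyond the $K$ moments (as in \cite{BD15}) or a restriction of the admissible $L$ is needed.

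Two further points. First, the quantitative derivative bound you rely on, $|D_y^{K+1}p_{t,m}(x,z)|\ls t^{m}(t+|x-z|)^{-n-m-K-1}$, does not follow from Proposition \ref{prop10-16}(i) (which is purely qualitative) nor from Lemma \ref{le7-21} (which contains no derivative estimates); it must be proved or imported from \cite{BD15,MSTZ12}. Moreover the form you posit, in which each $y$-derivative also improves the spatial decay, is stronger than the standard bounds of the type $|D_y^{\gamma}p_{t,m}(x,y)|\ls t^{-|\gamma|}\,t^{m}(t+|x-y|)^{-n-m}$; the latter would suffice for your computation, but some such estimate has to be established rather than asserted. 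Second, the case $v=0$, $t>1$ of (ii) cannot be dismissed by ``a direct size estimate'': that only yields $t^{-n}(1+|x-x_Q|/t)^{-n-m}$, which misses the required factor $t^{-L}$ when $L>n$; there one needs extra input such as the spectral gap of $H$ (its lowest eigenvalue is $n$, so $e^{-t\sqrt H}$ decays exponentially for $t\gs1$). In summary: regime (i) is fine, but regime (ii) needs the missing kernel derivative estimates and, more importantly, a correct argument for the stated gain $\lf(2^{-v}/t\r)^{L}$ beyond the exponent $n+K+1$ that the moment conditions alone can deliver.
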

\begin{proof}
We refer to \cite[Lemma 6.3]{BD15} for the proof of this lemma.
\end{proof}

From Lemmas \ref{lem12-1} and \ref{le11-20},
by some arguments similar to that used in the proof of Theorem \ref{thm10-9},
we have the following result.

\begin{thm}\label{thm12-2}
Let $H$, $p(\cdot)$, $q(\cdot)$, $\az(\cdot)$ be as in
Definition \ref{de9-16a} and $\az^+<0$.
Then we have
$$F_{p(\cdot),q(\cdot)}^{\az(\cdot)}(\rn)\hookrightarrow
F_{p(\cdot),q(\cdot)}^{\az(\cdot),H}(\rn).$$
\end{thm}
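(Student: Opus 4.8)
The plan is to combine the atomic decomposition of $F_{p(\cdot),q(\cdot)}^{\az(\cdot)}(\rn)$ from Lemma~\ref{lem12-1} with the kernel estimates for smooth atoms in Lemma~\ref{le11-20}, and then recognize the resulting building blocks as (multiples of) $(H,M,N)$-molecules so that Theorem~\ref{thm10-9} applies. First I would fix $f\in F_{p(\cdot),q(\cdot)}^{\az(\cdot)}(\rn)$ and choose $\ez>0$ small; Lemma~\ref{lem12-1} then yields $(K,L)$-atoms $\{a_Q\}$ with $K=n/\min\{1,p^-,q^-\}-n+\ez$ and $L=\az^++1+\ez$, together with coefficients $\{s_Q\}\in f_{p(\cdot),q(\cdot)}^{\az(\cdot)}(\rn)$ with norm comparable to $\|f\|_{F_{p(\cdot),q(\cdot)}^{\az(\cdot)}(\rn)}$, so that $f=\sum_{v\in\nn_0}\sum_{Q\in\cd_v}s_Qa_Q$ in $\cs'(\rn)$. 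The target estimate is
$$\lf\|f\r\|_{F_{p(\cdot),q(\cdot)}^{\az(\cdot),H}(\rn)}
\ls \lf\|\{s_Q\}_{Q\in\cd_v,v\in\nn_0}\r\|_{f_{p(\cdot),q(\cdot)}^{\az(\cdot)}(\rn)}
\sim \|f\|_{F_{p(\cdot),q(\cdot)}^{\az(\cdot)}(\rn)}.$$

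The core of the argument is to estimate $(t\sqrt H)^m e^{-t\sqrt H}f = \sum_{v,Q} s_Q (t\sqrt H)^m e^{-t\sqrt H} a_Q$ by splitting the sum over $v$ into $v<k$ and $v\ge k$ on each dyadic ring $t\in[2^{-k},2^{-k+1})$, exactly as in the proof of Theorem~\ref{thm10-9}, but now using the two bounds of Lemma~\ref{le11-20} in place of Lemma~\ref{le7-21c}. For $v<k$ (hence $t\le 2^{-v}$) the bound $|Q|^{-1/2}(1+2^v|x-x_Q|)^{-n-K}$ has no extra decay in $t/2^{-v}$, which is why one needs $\az^+<0$: the geometric series $\sum_{v<k}2^{-(k-v)(-\az^+)\cdot\text{something}}$ must converge, and here the negativity of $\az^+$ supplies the needed gain. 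For $v\ge k$ (hence $t>2^{-v}$) the factor $(2^{-v}/t)^L$ with $L=\az^++1+\ez$ gives summability against $2^{(v-k)\az^-}$-type weights provided $L>\az^-$, which holds since $L>\az^+\ge\az^-$... wait, rather since $L=\az^++1+\ez>\az^+\ge \az^-$ is automatic when $\az^+<0$ because then $\az^-<0<1$. In both regimes I would apply Lemma~\ref{le7-21b} to replace the pointwise molecular bound by an $\eta_{v,R\tau}$-convolution of $\sum_{Q\in\cd_v}|s_Q||Q|^{-1/2}\cha_Q$ raised to a small power $\tau<\min\{1,p^-,q^-\}$, then use \cite[Lemma 6.1]{DHR09} to move the factor $2^{v\az(\cdot)}$ inside the convolution (absorbing the resulting $C_{\log}(\az)$-loss into $R$), and finally invoke Lemma~\ref{lem10-17a} (the vector-valued convolution inequality) together with the geometric summation to bound everything by $\|\{s_Q\}\|_{f_{p(\cdot),q(\cdot)}^{\az(\cdot)}(\rn)}$. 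The term $\|e^{-\sqrt H}f\|_{L^{p(\cdot)}(\rn)}$ is handled the same way by summing the $v$-sum directly (this is the analogue of \eqref{eq7-19c}), using $|e^{-\sqrt H}a_Q(x)|\ls |Q|^{-1/2}(1+2^v|x-x_Q|)^{-n-K}$ for $v\ge 1$ and the obvious bound for $v=0$.

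There is, however, one point that requires care and which I would address first: Lemma~\ref{le11-20} as stated requires $L\in\nn$ and $m\in\nn\cap(n+K,\fz)$, and to feed the output into Theorem~\ref{thm10-9} I must check that the atoms $a_Q$ can legitimately be viewed as $(H,M,N)$-molecules for admissible $M,N,m$ — this is really a bookkeeping matter of choosing $\ez$ so that $K,L$ are integers exceeding the thresholds in Theorem~\ref{thm10-9} and in Definition~\ref{de9-16a}, and of verifying that the smoothness/moment conditions of a $(K,L)$-atom translate (via Lemma~\ref{le11-20}) into the decay bound $|(t\sqrt H)^m e^{-t\sqrt H}m_Q|$ required there. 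Alternatively, and perhaps more cleanly, I would not pass through molecules at all but rerun the $\cs'(\rn)$-norm estimate of Theorem~\ref{thm10-9} verbatim with Lemma~\ref{le11-20} substituted for Lemma~\ref{le7-21c}; the only structural difference is the absence of the $(t/2^{-v})^{m-N-n}$ gain in regime $v<k$, compensated by $\az^+<0$. \textbf{The main obstacle} I anticipate is precisely this compensation: verifying that the geometric series over $v<k$ converges with the correct sign of exponent and that the loss is uniformly controlled after the $C_{\log}(\az)$-adjustment in \cite[Lemma 6.1]{DHR09} — in other words, pinning down the exact range of $\tau$ (something like $n/(n+K-C_{\log}(\az))<\tau<\min\{1,p^-,q^-\}$) and of the convolution index $R$ so that both Lemma~\ref{le7-21b} and Lemma~\ref{lem10-17a} apply simultaneously while the $v$-summation stays finite. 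Once that is set up, the rest is a routine transcription of the proof of Theorem~\ref{thm10-9}.
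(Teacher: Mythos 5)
Your overall strategy is the same as the paper's (atomic decomposition via Lemma \ref{lem12-1}, then rerun the argument of Theorem \ref{thm10-9} with Lemma \ref{le11-20} in place of Lemma \ref{le7-21c}), but there is a concrete quantitative error in the regime $v\ge k$ that makes the proof fail as written. For $t\in[2^{-k},2^{-k+1})$ with $v\ge k$, Lemma \ref{le11-20}(ii) gives decay at the coarse scale $t\sim 2^{-k}$, i.e. $(1+2^{k}|x-x_Q|)^{-n-K}$, while the cubes $Q\in\cd_v$ live at the finer scale $2^{-v}$; applying Lemma \ref{le7-21b} with $j=k<v$ and $R=n+K$ therefore costs the factor $\max\{1,2^{(v-k)(n+K)}\}=2^{(v-k)(n+K)}$. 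Combining this with $(2^{-v}/t)^{L}\sim 2^{-(v-k)L}$ and with $2^{k\az(x)}2^{-v\az(x)}\ls 2^{-(v-k)\az^-}$, the geometric series over $v\ge k$ converges only when $L>n+K-\az^-$ (roughly $L>n/\min\{1,p^-,q^-\}-\az^+$), not merely $L>\az^-$ as you assert. With your choice $L=\az^++1+\ez<1+\ez$ and $K= n/\min\{1,p^-,q^-\}-n+\ez\ge\ez$, the relevant exponent $L-K-n+\az^+\le 2\az^++1-n+\ez<0$ (recall $\az^+<0$ and $n\ge1$), so the $v$-sum diverges; moreover Lemma \ref{le11-20} itself requires $L\in\nn\cap(K,\fz)$, which your $L$ need not satisfy. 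A smaller but related issue: your $\tau$-range $n/(n+K-C_{\log}(\az))<\tau<\min\{1,p^-,q^-\}$ is nonempty only if $K>n/\min\{1,p^-,q^-\}-n+C_{\log}(\az)$, which your $K$ misses by $C_{\log}(\az)$ unless $\ez$ is taken larger than $C_{\log}(\az)$.

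The fix is exactly what the paper does: the atomic decomposition of \cite{DHR09} is available for arbitrarily large smoothness and moment parameters, so one chooses $K=\lfloor\max\{n/\min\{1,p^-,q^-\}-n,\,C_{\log}(\az)\}+\ez\rfloor$ and, crucially, $L=\max\{\az^++1,\,n/\min\{1,p^-,q^-\}-\az^+\}+\ez$, which makes $L-K-n+\az^+>0$ while keeping Lemma \ref{le7-21b} applicable after the $C_{\log}(\az)$-adjustment from \cite[Lemma 6.1]{DHR09}. You correctly identified that $\az^+<0$ is what rescues the regime $v<k$ (where Lemma \ref{le11-20}(i) gives no gain in $t2^{v}$), and that part of your argument matches the paper; but the place you dismissed as automatic ($v\ge k$) is precisely where the parameters must be enlarged, and with the literal $(K,L)$ of Lemma \ref{lem12-1} the synthesis estimate does not go through.
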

\begin{proof}
Suppose $f\in F_{p(\cdot),q(\cdot)}^{\az(\cdot)}(\rn)$. Let
$K=\lfloor\max\{n/{\min\{1,p^-,q^-\}}-n,C_{\log}(\az)\}+\varepsilon\rfloor$ and
$$L=\max\lf\{\az^++1,n/{\min\{1,p^-,q^-\}}-\az^+\r\}+\varepsilon,$$
where $\varepsilon$ is
an arbitrarily small positive number.
Then, by Lemma \ref{lem12-1}, there exist a family of
($K, L$)-atoms $\{a_Q\}_{Q\in \cd_v,v\in\nn_0}$ and a sequence of coefficients
$\{s_Q\}_{Q\in \cd_v,v\in\nn_0}\in f_{p(\cdot),q(\cdot)}^{\az(\cdot)}(\rn)$ such that
$$f= \sum_{Q\in \cd_v,v\in\nn_0}s_Qa_Q\quad {\rm in}\quad \cs'(\rn)$$
and $\|\{s_Q\}_{Q\in \cd_v,v\in\nn_0}\|_{f_{p(\cdot),q(\cdot)}^{\az(\cdot)}(\rn)}
\ls\|f\|_{F_{p(\cdot),q(\cdot)}^{\az(\cdot)}(\rn)}$.
Now we claim that
\begin{equation}\label{eq12-1}
\|f\|_{F_{p(\cdot),q(\cdot)}^{\az(\cdot),H}(\rn)}\ls
\|\{s_Q\}_{Q\in \cd_v,v\in\nn_0}\|_{f_{p(\cdot),q(\cdot)}^{\az(\cdot)}(\rn)}.
\end{equation}
Indeed, via Lemmas \ref{le11-20} and \ref{le7-21b} with
$n/(n+K-C_{\log}(\alpha))<\tau<\min\{1,p^-,q^-\}$ and $R=n+K$, we have, for each $k\in\nn_0$,
\begin{align*}
&\sum_{v\in\nn_0}\sum_{Q\in\cd_v}
|s_Q|\sup_{t\in[2^{-k},2^{-k+1})}|(t\sqrt{H})^me^{-t\sqrt{H}}a_Q|\\
&\quad\le\lf(\sum_{v\in\nn_0:v<k}\sum_{Q\in\cd_v}+\sum_{v\in\nn_0:v\ge k}\sum_{Q\in\cd_v}\r)
\lf(|s_Q|\sup_{t\in[2^{-k},2^{-k+1})}|(t\sqrt{H})^me^{-t\sqrt{H}}a_Q|\r)\\
&\quad\ls\sum_{v<k}2^{-v\az(x)}\lf[\eta_{v,(n+K)\tau-\tau C_{\log}(\alpha)}
\ast\lf(2^{\tau v\az(\cdot)}\sum_{Q\in\cd_v}|Q|^{-\frac12\tau}
|s_Q|^{\tau}\chi_Q(\cdot)\r)\r]^{1/{\tau}}\\
&\qquad+\sum_{v\ge k}2^{-v\az(x)}2^{-(v-k)L}2^{(v-k)(n+K)}\lf[\eta_{v,(n+K)\tau-\tau C_{\log}(\alpha)}
\ast\lf(2^{\tau v\az(\cdot)}\sum_{Q\in\cd_v}|Q|^{-\frac12\tau}
|s_Q|^{\tau}\chi_Q(\cdot)\r)\r]^{1/{\tau}}.
\end{align*}
Hence,
\begin{align*}
&2^{k\az(x)}\sum_{v\in\nn_0}\sum_{Q\in\cd_v}
|s_Q|\sup_{t\in[2^{-k},2^{-k+1})}|(t\sqrt{H})^me^{-t\sqrt{H}}a_Q|\\
&\quad\ls\sum_{v<k}2^{(k-v)\az^+}\lf[\eta_{v,(n+K)\tau-\tau C_{\log}(\alpha)}
\ast\lf(2^{\tau v\az(\cdot)}\sum_{Q\in\cd_v}|Q|^{-\frac12\tau}
|s_Q|^{\tau}\chi_Q(\cdot)\r)\r]^{1/{\tau}}\\
&\qquad+\sum_{v\ge k}2^{-(v-k)(L-K-n+\az^+)}\lf[\eta_{v,(n+K)\tau-\tau C_{\log}(\alpha)}
\ast\lf(2^{\tau v\az(\cdot)}\sum_{Q\in\cd_v}|Q|^{-\frac12\tau}
|s_Q|^{\tau}\chi_Q(\cdot)\r)\r]^{1/{\tau}}.
\end{align*}
By noticing that $\az^+<0$ and $L-K-n+\az^+>0$, so similar to the proof of Theorem \ref{thm10-9},
we claim that \eqref{eq12-1} holds true. This completes the proof of Theorem \ref{thm12-2} .
\end{proof}

\noindent\textbf{Declarations}\quad
The author has no conflict of interest to declare. Also, data sharing not applicable to
this article as no data sets were generated or analysed during the current study.

\medskip

\noindent Qi Sun

\noindent School of Mathematics and Statistics,
Central South University,
Changsha, Hunan 410075, People's Republic of China

\smallskip

\noindent {\it E-mail}: \texttt{qisun2577@csu.edu.cn}

\medskip

\noindent Ciqiang Zhuo (Corresponding author)

\noindent  Key Laboratory of Computing and Stochastic Mathematics
(Ministry of Education), School of Mathematics and Statistics,
Hunan Normal University,
Changsha, Hunan 410081, People's Republic of China

\smallskip

\noindent {\it E-mail}: \texttt{cqzhuo87@hunnu.edu.cn}

\end{document}